\documentclass[a4paper,11pt]{article}

\usepackage{float}
\usepackage{amsmath,amssymb,amsthm,amscd}
\usepackage[mathscr]{eucal}
\usepackage[all]{xy}
\usepackage{graphicx}
\usepackage{color}
\usepackage{hyperref}
\hypersetup{
colorlinks=true,
linkcolor=blue,
citecolor=blue,
urlcolor=blue
}

\makeatletter
    
    \@addtoreset{equation}{section}
  \makeatother

\newcommand{\mcal}{\mathcal}
\newcommand{\mbf}{\mathbf}

\newcommand{\mfrak}{\mathfrak}
\newcommand{\mbb}{\mathbb}
\newcommand{\mrm}{\mathrm}
\newcommand{\vphi}{\varphi}
\newcommand{\ve}{\varepsilon}
\newtheorem{theorem}{Theorem}[section]
\newtheorem{corollary}[theorem]{Corollary}
\newtheorem{lemma}[theorem]{Lemma}
\newtheorem{proposition}[theorem]{Proposition}

\theoremstyle{definition}

\newtheorem{remark}[theorem]{Remark}

\title{Fields of definition of $p$-torsion points of elliptic curves and their ramification}
\author{Yoshiyasu Ozeki\footnote{
Faculty of Science, Kanagawa University,
3-27-1 Rokkakubashi, Kanagawa-ku, Yokohama-shi, Kanagawa 221-8686, JAPAN
\endgraf
e-mail: {\tt ozeki@kanagawa-u.ac.jp}
}
and Manabu Yoshida\footnote{
Faculty of Mathematics Education, Osaka Kyoiku University, 
4-698-1 Asahigaoka, Kashiwara, Osaka 582-8582, JAPAN
\endgraf
e-mail: {\tt yoshida-m95@cc.osaka-kyoiku.ac.jp}
\endgraf
Keywords: elliptic curves, $p$-torsion, ramification of local fields
\endgraf
AMS 2020 Mathematics subject classification: 
11G07 (primary), 11S15 (secondary) 
}}

\begin{document}
\maketitle

\begin{abstract}
Let $E$ be an elliptic curve over $\mbb{Q}_p$. 
We study the field $\mbb{Q}_p(E[p])$ generated by the $p$-torsion points of $E$. 
When $E$ has good reduction, we determine not only $\mbb{Q}_p(E[p])$ 
but also the field $\mbb{Q}_p(P)$ for every point $P\in E[p]$. 
This classification yields criteria for the existence of $p$-torsion over 
unramified and Lubin--Tate extensions of $\mbb{Q}_p$. 
As a global application, let $E/\mbb{Q}$ be an elliptic curve and let $p$ be an 
odd prime of good reduction. Define $f_p(E)$ to be the multiplicative order of 
$a_p(E)$ modulo $p$ in the ordinary case, and set $f_p(E)=p^2-1$ in the 
supersingular case. We prove that $E(K)[p]=0$ for every number field 
$K/\mbb{Q}$ with $[K:\mbb{Q}]<f_p(E)$. 
When $E$ has bad reduction, we describe the ramified part of 
$\mbb{Q}_p(E[p])/\mbb{Q}_p$. For every reduction type, we determine the 
maximal upper ramification break of $\mbb{Q}_p(E[p])/\mbb{Q}_p$.
\end{abstract}

    \tableofcontents


\section{Introduction}

Let $p$ be a prime and let $\mbb{Q}_p$ be the field of $p$-adic numbers. 
For an elliptic curve $E/\mbb{Q}_p$, we study the field 
$\mbb{Q}_p(E[p])$ generated by its $p$-torsion points. 
The shape of this field, and the methods used to analyze it, depend strongly on 
the reduction type of $E$.

In the good-reduction case, classical results of Serre determine the possible 
degrees of the full $p$-torsion field. If $E$ has good ordinary reduction, then
\[
 [\mbb{Q}_p(E[p]):\mbb{Q}_p]=(p-1)f
 \quad\text{or}\quad
 p(p-1)f,
\]
where $f$ is the residue degree of $\mbb{Q}_p(E[p])/\mbb{Q}_p$ 
\cite[Proposition 11, Corollaire]{Ser72}. If $E$ has good supersingular 
reduction, then 
$[\mbb{Q}_p(E[p]):\mbb{Q}_p]=2(p^2-1)$ 
\cite[Proposition 12]{Ser72}.

Our first goal is to refine these degree formulas by determining the fields of 
definition themselves. In the ordinary case, we give a criterion for whether 
$E[p]$ is tamely or wildly ramified (Lemma~\ref{lem:tameness}) and determine 
$\mbb{Q}_p(P)$ for every $P\in E[p]$ 
(Theorems~\ref{ord:tame}, \ref{ord:wild}, and \ref{ord:p=2}). 
In the supersingular case, we likewise determine the field of definition of 
every $p$-torsion point (Theorem~\ref{ss}). Thus, in the good-reduction case, 
our results describe not only the full field $\mbb{Q}_p(E[p])$ but also how its 
subfields arise from individual torsion points.

For bad reduction, we focus on the ramified part of 
$\mbb{Q}_p(E[p])/\mbb{Q}_p$. Kraus computed the valuation of the different of
\[
 \mbb{Q}_p^{\mrm{ur}}(E[p])/\mbb{Q}_p^{\mrm{ur}},
\]
where $\mbb{Q}_p^{\mrm{ur}}$ is the maximal unramified extension in a fixed 
algebraic closure of $\mbb{Q}_p$ \cite{Kra99}. Combining these computations 
with the classification of degree-$p$ extensions, we determine the structure 
of the ramified part and the maximal upper ramification break for every bad 
reduction type 
(Theorems~\ref{thm:PotentialMulti}, \ref{thm:Multiplicative:p=2}, 
\ref{thm:PotentialGood}, \ref{thm:Additive:p=3}, and 
\ref{thm:Additive:p=2}).

The explicit description of the fields $\mbb{Q}_p(P)$ also has applications to 
the existence of $p$-torsion over local and global extensions. 
Corollary~\ref{cor:unramified-torsion} characterizes the appearance of 
$p$-torsion over finite unramified extensions, while 
Theorem~\ref{thm:lubin-tate-torsion} gives a congruence criterion for 
$p$-torsion over Lubin--Tate extensions. The main global application is 
Theorem~\ref{thm:global}. For an elliptic curve $E/\mbb{Q}$ and an odd prime 
$p$ of good reduction, define
\[
 f_p(E)=
 \begin{cases}
  \mrm{ord}_{\mbb{F}_p^{\times}}\bigl(a_p(E)\bigr),
    & \text{if $E$ has ordinary reduction at $p$},\\
  p^2-1,
    & \text{if $E$ has supersingular reduction at $p$}.
 \end{cases}
\]
Then, for every number field $K/\mbb{Q}$ of degree $d$, 
Theorem~\ref{thm:global} gives the explicit vanishing criterion
\[
 f_p(E)>d \quad\Longrightarrow\quad E(K)[p]=0.
\]
In particular, a single local invariant at $p$ provides an obstruction to the 
existence of $p$-torsion over number fields of prescribed degree.

We also mention the related work of Freitas and Kraus \cite{FrKr20}, who 
determined the degree of $\mbb{Q}_{\ell}(E[p])/\mbb{Q}_{\ell}$ for 
$\ell\neq p$ and $p>2$. Our study treats the complementary case $\ell=p$ and 
provides a detailed description of the ramification of the $p$-torsion field.

The paper is organized as follows. In Section~\ref{main:local}, we state the 
main results. We first determine the fields generated by $p$-torsion points in 
the good-reduction case, then give the local and global applications in 
Subsection~\ref{subsec:applications}, and finally determine the ramification 
breaks for all reduction types; explicit examples are collected at the end of 
the section. In Section~\ref{sect:Ramification}, we review the ramification 
theory used in the proofs and analyze the relevant extensions of degree $p$. 
In Section~\ref{Sect:Proof}, we prove the main results. For good reduction, we 
use the ramification theory of finite flat group schemes \cite{Fon85}. This 
approach differs from that of Freitas and Kraus \cite{FrKr20}, who use the 
Serre--Tate theorem \cite[Lemma 2]{SeTa68} for primes $\ell\neq p$. For bad 
reduction, we combine Kraus' computations of the different with 
Propositions~\ref{prop:breaklambda} and \ref{prop:break0} to recover defining 
polynomials for the ramified extension $L/F$, where 
$L=\mbb{Q}_p(E[p])$ and $F$ is its maximal unramified subextension. We retain 
Kraus' division into cases throughout.

\vspace{5mm}
\noindent
{\bf Notation.}
Let $p$ be a prime.
In this paper, we fix algebraic closures 
$\overline{\mbb{Q}}_p$ and $\overline{\mbb{F}}_p$ 
of $\mbb{Q}_p$ and $\mbb{F}_p$, respectively. 
Let $v_p$ be the valuation on $\overline{\mbb{Q}}_p$ normalized by 
$v_p(\mbb{Q}_p^{\times})=\mbb{Z}$. 
We denote by $\mbb{Q}_{p^f}$ the unramified extension of degree $f$ over $\mbb{Q}_p$. 
For a finite extension $K$ of $\mbb{Q}_p$, 
we  denote by $\mcal{O}_K$ and $\mbf{m}_K$ 
the ring of integers of $K$ and its maximal ideal, respectively. 
Let $v_K$ be the valuation on $K$ normalized by $v_K(K^{\times})=\mbb{Z}$. 
Denote by $\mfrak{R}_K$ a complete system of representatives of $\mcal{O}_K$ 
modulo $\mbf{m}_K$ such that $0 \in \mfrak{R}_K$. 
We set $\Gamma_K:=\mrm{Hom}_{\mbb{Q}_p}(K,\overline{\mbb{Q}}_p)$
and denote by $G_K$  the absolute Galois group 
$\mrm{Gal}(\overline{\mbb{Q}}_p/K)$ of $K$.
We denote by $\mu_p$ the set of $p$-th roots of unity in $\overline{\mbb{Q}}_p$. 
The labels of elliptic curves in this paper follow 
the convention used in the Cremona database. 
The data available in the LMFDB \cite{lmfdb} is also useful for referencing elliptic curves; 
however, note that the labeling in the LMFDB differs from 
that of the Cremona label. 

\section{Main results}
\label{main:local}
Let $E$ be an elliptic curve over $\mbb{Q}_p$.
In this section, we summarize our results.
First, we determine  the explicit structure of the field $\mbb{Q}_p(E[p])$ 
generated by all $p$-torsion points $E[p]$ of $E$,   
as well as the field $\mbb{Q}_p(P)$ generated by a given point $P\in E[p]$, 
in the case where $E$ has good reduction.
Next, combining our results with those  of \cite{Kra99},
we determine the maximal ramification breaks of the extension 
$\mbb{Q}_p(E[p])/\mbb{Q}_p$ without any reduction hypothesis on $E$.

\subsection{On the structure of the field generated by a {\it p}-torsion point}
If $E$ has good reduction, we denote by $\bar{E}$ the reduction of $E$. 
We set $a_p(E):=1+p-\# \bar{E}(\mbb{F}_p)$.
The Hasse bound gives $|a_p(E)|\le 2\sqrt{p}$.
\subsubsection{Case of good ordinary reduction}
In this section,
we assume that $E$ has {\it good ordinary reduction}, that is, 
$\bar{E}(\overline{\mbb{F}}_p)[p]\not=0$. 
This is equivalent to saying that 
$a_p(E)\not \equiv 0 \! \mod p$.
Let $f$ be the order of $a_p(E) \! \mod p$
and denote by $F$ the unramified extension of $\mbb{Q}_p$ of degree $f$.
Then, it is known that the following are equivalent for an odd prime $p$ 
(see Lemma \ref{lem:tameness} below).
\begin{itemize}
\item[(i)] $E(F)[p]\not=0$.
\item[(ii)] $E[p]\simeq \mbb{F}_p(1)\oplus \mbb{F}_p$ as $G_F$-modules.
\item[(iii)] $E[p]$ is tame.
\end{itemize}
We remark that, since $F$ is unramified over $\mbb{Q}_p$,
the condition $E(F)[p]\not=0$ is equivalent to saying that $E(F)[p]\simeq \mbb{Z}/p\mbb{Z}$.
By \cite[Proposition 13.7 and \S 17]{Gro90}, the above condition is equivalent to 
$j(E)\equiv j(E_0)  \pmod{p^2}$, 
where  $E_0$ is the canonical lift of $\bar{E}$ (see \cite[\S17]{Gro90}). 
The structures of the fields $\mbb{Q}_p(E[p])$ and $\mbb{Q}_p(P)$ for $P\in E[p]$ 
vary depending on whether 
$E[p]$ is tame or wild.
The Galois group of the extension $\mbb{Q}_p(E[p])/\mbb{Q}_p$ is well-known
(cf.\ \cite[Proposition 11, Corollaire]{Ser72}).  
Put $G=\mrm{Gal}(\mbb{Q}_p(E[p])/\mbb{Q}_p)$
and denote by $I$ the inertia subgroup of $\mrm{Gal}(\mbb{Q}_p(E[p])/\mbb{Q}_p)$.
Then, under the natural injection $G\hookrightarrow GL_{\mbb{F}_p}(E[p])\simeq GL_2(\mbb{F}_p)$
for  a suitable choice of basis,  
the images of $I\subset G$ are of the form 
\[
\begin{pmatrix}
    \mbb{F}_p^{\times} & 0 \\ 0 & 1
\end{pmatrix}
\subset 
\begin{pmatrix}
    \mbb{F}_p^{\times} & 0 \\ 0 & C_f
\end{pmatrix}
\quad 
\mbox{or}
\quad 
\begin{pmatrix}
    \mbb{F}_p^{\times} & \mbb{F}_p \\ 0 & 1
\end{pmatrix}
\subset 
\begin{pmatrix}
    \mbb{F}_p^{\times} & \mbb{F}_p \\ 0 & C_f
\end{pmatrix}
\]
if $E[p]$ is tame or wild, respectively.
The results below focus on field structures of $\mbb{Q}_p(E[p])$ and  $\mbb{Q}_p(P)$ for $P\in E[p]$.

\begin{theorem}[Tame  case: $p\ge 3$]
\label{ord:tame}
Let $p$ be an odd prime and assume $E(F)[p]\not=0$.
\begin{itemize}
\item[{\rm (1)}] $\mbb{Q}_p(E[p])=F(\mu_p)$. 
In particular, $[\mbb{Q}_p(E[p]):\mbb{Q}_p]=f(p-1)$.
\item[{\rm (2)}] For each non-zero $P\in E[p]$, we have 
\begin{align*}
\mbb{Q}_p(P)=
\left\{
\begin{array}{cl}
F &
if\ P\in E(F)[p], \cr
\mbb{Q}_p\left(\sqrt[p-1]{-a_p(E)^{-1}p}\right) &
if\ P\in \mrm{ker}(E[p]\overset{\mrm{red}}{\rightarrow} \bar{E}[p]), \cr
F(\mu_p) & 
\mbox{otherwise}.
\end{array}
\right.
\end{align*}
\end{itemize}
Here, $E[p]\overset{\mrm{red}}{\rightarrow} \bar{E}[p]$ is the reduction map.
\end{theorem}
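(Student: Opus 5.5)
Our approach uses the connected--étale sequence of $E[p]$ together with the splitting (ii) supplied by Lemma~\ref{lem:tameness}. Over $\mbb{Z}_p$, the finite flat group scheme $E[p]$ sits in an exact sequence
\[
0\to E[p]^0\to E[p]\to E[p]^{\mrm{et}}\to 0 ,
\]
where $E[p]^0(\overline{\mbb{Q}}_p)=\ker(E[p]\to\bar{E}[p])$ is the $p$-torsion of the formal group $\hat{E}$. The étale quotient is the unramified character $\mbb{F}_p(u)$, on which Frobenius acts by the unit root $u$ of $X^2-a_pX+p$; since $u\equiv a_p(E)\pmod p$, this character has order $f$. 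By the Weil pairing, the connected part is $\mbb{F}_p(1)\otimes u^{-1}$. By Lemma~\ref{lem:tameness}, the hypothesis $E(F)[p]\ne0$ gives $E[p]\simeq\mbb{F}_p(1)\oplus\mbb{F}_p$ over $G_F$. The same lemma gives the splitting over $G_{\mbb{Q}_p}$: since $\mrm{Gal}(F/\mbb{Q}_p)$ has order $f$ prime to $p$, a $G_F$-stable complement can be averaged into a $G_{\mbb{Q}_p}$-stable one. Hence
\[
E[p]\simeq \mbb{F}_p(\omega u^{-1})\oplus\mbb{F}_p(u),
\]
where $\omega$ is the mod $p$ cyclotomic character.

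Part (1) follows directly. The kernel of $G_{\mbb{Q}_p}$ acting on $E[p]$ is $\ker(\omega u^{-1})\cap\ker(u)=\ker\omega\cap\ker u$. Therefore $\mbb{Q}_p(E[p])$ is the compositum of $\mbb{Q}_p(\mu_p)$ and $F$, which is $F(\mu_p)$. Its degree is $f(p-1)$ because the first extension is totally ramified and the second is unramified.

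For part (2), we treat the three types of $P$ separately.
\begin{itemize}
\item[(a)] If $P$ lies in the étale summand $\mbb{F}_p(u)$, which is $E(F)[p]$, then the stabilizer of $P$ is $\ker u$, so $\mbb{Q}_p(P)=F$.
\item[(b)] If $P$ is neither in the étale summand nor in the kernel of reduction, write $P=P_1+P_2$ with both components nonzero. The stabilizer of $P$ is then $\ker(\omega u^{-1})\cap\ker u$, which is the full kernel, so $\mbb{Q}_p(P)=F(\mu_p)$.
\item[(c)] If $P$ is in the kernel of reduction, its stabilizer is $\ker(\omega u^{-1})$. Thus $\mbb{Q}_p(P)$ is the cyclic extension of degree $p-1$ cut out by the character $\omega u^{-1}$. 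This character is tame, with inertia restriction $\omega$, so the extension is totally ramified of degree $p-1$.
\end{itemize}

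The main obstacle is case (c): identifying the totally ramified field explicitly as $\mbb{Q}_p(\sqrt[p-1]{-a_p^{-1}p})$. The character $\omega u^{-1}$ corresponds under local class field theory to a character of $\mbb{Q}_p^\times$. For a tame character $\chi$ of $G_{\mbb{Q}_p}$ of order dividing $p-1$ and trivial on $1+p\mbb{Z}_p$, the fixed field is $\mbb{Q}_p(\sqrt[p-1]{c})$, where $c$ generates the kernel of $\chi$ on $\mbb{Q}_p^\times/(\mbb{Q}_p^\times)^{p-1}$. We use the normalization in which a uniformizer maps to arithmetic Frobenius. Then $\omega$ corresponds to the character trivial on $p$ and equal to the inverse reduction on $\mbb{Z}_p^\times$; this matches $\mbb{Q}_p(\mu_p)=\mbb{Q}_p(\sqrt[p-1]{-p})$. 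The character $u^{-1}$ is trivial on units and sends $p$ to $a_p^{-1}$.

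Now evaluate $\omega u^{-1}$ at $x=\alpha p$ with $\alpha\in\mbb{Z}_p^\times$. The value is $\bar\alpha^{-1}a_p^{-1}$, so $x$ lies in the kernel exactly when $\alpha\equiv a_p^{-1}\pmod p$. Using $\mbb{Z}_p^\times=\mu_{p-1}\times(1+p\mbb{Z}_p)$ and adjusting by $(-1)$ conventions, the fixed field is $\mbb{Q}_p(\sqrt[p-1]{-a_p^{-1}p})$. The sign should be checked against the case $u=1$: there the character is $\omega$, the field must be $\mbb{Q}_p(\mu_p)=\mbb{Q}_p(\sqrt[p-1]{-p})$, and the formula gives $\sqrt[p-1]{-p}$, which is consistent.

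An alternative route avoids the class field theory normalization. Over $\mbb{Z}_p$, $\hat{E}$ is a height-one formal group isomorphic over $W(\overline{\mbb{F}}_p)$ to a twist of $\hat{\mbb{G}}_m$. Its $[p]$-series is $p\,\tilde u^{-1}X+\dots+X^p$ up to units, and the Newton polygon together with Hensel's lemma gives $x(P)^{p-1}\equiv -p\,u^{-1}$. Either route yields the stated field.
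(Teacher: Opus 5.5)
Your strategy is sound, and once the sign error described below is fixed it proves the theorem. For part (1) and cases (a), (b) it is the paper's argument. The paper also writes $E[p]=\hat E[p]\oplus E(F)[p]$ as $G_{\mathbb{Q}_p}$-modules and reads off stabilizers. It gets the $G_{\mathbb{Q}_p}$-splitting directly, since $\mathbb{F}_p(\varepsilon\psi^{-1})$ and $\mathbb{F}_p(\psi)$ are non-isomorphic stable lines, rather than by averaging a $G_F$-complement.

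For the kernel of reduction you take a genuinely different route. The paper (Lemma~\ref{lem:red:def}) uses $D_{\mathrm{cris}}$ to show that $G_{\mathbb{Q}_p}$ acts on $V_p(\hat E)$ by the Lubin--Tate character of the non-unit root $\alpha=p/\beta$ of $T^2-a_p(E)T+p$. Hence $\mathbb{Q}_p(\hat E[p])$ is generated by a non-zero root of $X^p+\alpha X$, i.e.\ it is $\mathbb{Q}_p(\sqrt[p-1]{-\beta^{-1}p})$, and Proposition~\ref{prop:criteria} then replaces $\beta^{-1}p$ by $a_p(E)^{-1}p$. You use only the mod $p$ character $\omega u^{-1}$, which you correctly obtain from the Weil pairing, and you identify its fixed field by local class field theory and Kummer theory. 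Your route is more elementary (no $p$-adic Hodge theory, no Krasner-type step) and is all the mod $p$ statement needs. The paper's route gives the full $p$-adic character, so every $\mathbb{Q}_p(\hat E[p^n])$ is a Lubin--Tate field. Its explicit polynomial $X^p+\alpha X$ also fixes the sign with no normalization bookkeeping.

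That bookkeeping is where your write-up goes wrong. Your rule ``the fixed field is $\mathbb{Q}_p(\sqrt[p-1]{c})$, where $c$ generates the kernel'' is off by a sign.
\begin{itemize}
\item[(i)] For $v_p(d)=1$ and $M=\mathbb{Q}_p(\sqrt[p-1]{d})$, one has $N_{M/\mathbb{Q}_p}(\sqrt[p-1]{d})=\bigl(\prod_{\zeta\in\mu_{p-1}}\zeta\bigr)d=-d$, because $p-1$ is even.
\item[(ii)] The subgroup $\langle -d\rangle(\mathbb{Q}_p^{\times})^{p-1}$ already has index $p-1$, so it is the whole norm group of $M$.
\item[(iii)] Hence a totally ramified character of order $p-1$ whose kernel is generated by $c$ cuts out $\mathbb{Q}_p(\sqrt[p-1]{-c})$, not $\mathbb{Q}_p(\sqrt[p-1]{c})$.
\end{itemize}
Under your rule as stated, $\omega$ (kernel generated by $p$) would cut out $\mathbb{Q}_p(\sqrt[p-1]{p})$. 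That field is not $\mathbb{Q}_p(\mu_p)=\mathbb{Q}_p(\sqrt[p-1]{-p})$, because $-1\notin(\mathbb{Q}_p^{\times})^{p-1}$. So the asserted ``match'' is false, and ``adjusting by $(-1)$ conventions'' is not an argument. Your computation that $\ker(\omega u^{-1}\circ\mathrm{Art})$ is generated by $a_p(E)^{-1}p$ modulo $(p-1)$-th powers is correct and independent of the normalization of $\mathrm{Art}$. With the corrected rule it yields $\mathbb{Q}_p(\sqrt[p-1]{-a_p(E)^{-1}p})$ immediately.

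Your alternative formal-group sketch does not work as written. The linear coefficient of $[p]$ is exactly $p$ for every formal group. After Weierstrass preparation, the factor $u^{-1}$ comes from the unit $X^p$-coefficient of $[p]$. Identifying that coefficient modulo $p$ with $a_p(E)$ (the Hasse invariant) is precisely the input you leave unaddressed.
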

\begin{theorem}[Wild case: $p\ge 3$]
\label{ord:wild}
Let $p$ be an odd prime and assume that $E(F)[p]=0$.
\begin{itemize}
\item[{\rm (1)}] The field extension 
$\mbb{Q}_p(E[p])/\mbb{Q}_p$ has exactly $p$ subextensions
of degree $p$, all of which are conjugate over $\mbb{Q}_p$.
Moreover, they are all $\mathbb{Q}_p$-isomorphic to 
\[
\mbb{Q}_p[X]/(X^p+a_p(E)^{-2}pX+p).
\]
\end{itemize}
\noindent
In the following, let $K$ denote the degree $p$ subextension of 
$\mbb{Q}_p(E[p])/\mbb{Q}_p$ described above.
\begin{itemize}
\item[{\rm (2)}] $\mbb{Q}_p(E[p])=FK(\mu_p)$. In particular, $[\mbb{Q}_p(E[p]):\mbb{Q}_p]=fp(p-1)$.
\item[{\rm (3)}] The reduction map gives an isomorphism $E(FK)[p]\simeq \bar{E}[p]$. 
\item[{\rm (4)}] For each $\sigma\in\Gamma_K$, the group
$E(F\cdot\sigma K)[p]$ has exactly $p-1$ non-zero elements.
Furthermore, for each non-zero $P\in E[p]$, we have
\begin{align*}
\mbb{Q}_p(P)=
\left\{
\begin{array}{cl}
\mbb{Q}_p\left(\sqrt[p-1]{-a_p(E)^{-1}p}\right) &
if\ P\in \mrm{ker}(E[p]\overset{\mrm{red}}{\rightarrow} \bar{E}[p]), \cr
F\cdot \sigma K &
if\ P\in E(F\cdot\sigma K)[p],\ \sigma\in\Gamma_K.
\end{array}
\right.
\end{align*}
Here, $E[p]\overset{\mrm{red}}{\rightarrow} \bar{E}[p]$ is the reduction map.
\end{itemize}
\end{theorem}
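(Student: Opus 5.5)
We treat $E[p]$ through the connected--étale sequence of the $p$-divisible group of the Néron model over $\mbb{Z}_p$:
\[
0\to \hat{E}[p]\to E[p]\to \bar{E}[p]\to 0 .
\]
On $\hat{E}[p]=\mrm{ker}(E[p]\to\bar{E}[p])$ the group $G_{\mbb{Q}_p}$ acts by $\chi\cdot u^{-1}$, where $\chi$ is the mod $p$ cyclotomic character and $u$ is the unramified character with $u(\mrm{Frob})=a_p(E)$ (the unit root of Frobenius). On $\bar{E}[p]$ it acts by $u$. Since $F$ is the unramified extension cut out by $u \bmod p$, in a suitable basis $G_F$ acts by $\begin{pmatrix}\chi & *\\ 0&1\end{pmatrix}$.

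\textbf{Kernel points.} $\hat{E}[p]$ is a finite flat group scheme of order $p$ over $\mbb{Z}_p$ with generic fibre the character $\chi u^{-1}$. By Oort--Tate, it is given by $X^p=aX$ with $a$ determined up to $(p-1)$-st powers of units. Comparing with $\mu_p$, where $X^p=-pX$ up to the standard normalization, and twisting by $u^{-1}$, we get $a=-a_p(E)^{-1}p$. A nonzero kernel point is then a root of $X^{p-1}=-a_p(E)^{-1}p$. Each such point generates $\mbb{Q}_p(\sqrt[p-1]{-a_p(E)^{-1}p})$, a totally ramified extension of degree $p-1$; for different roots these fields coincide up to $\mu_{p-1}\subset\mbb{Q}_p$.

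\textbf{Points with nonzero reduction.} Over $F$ the cocycle $*$ gives a class in $H^1(G_F,\mbb{F}_p(1))=F^\times/F^{\times p}$. Finite flatness of $E[p]$ forces this class into the unit part $\mcal{O}_F^\times/(\mcal{O}_F^{\times})^p$; this is the Fontaine ramification bound, via Lemma~\ref{lem:tameness} and \cite{Fon85}.

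\emph{Tame case.} The class vanishes (equivalence (i)--(iii)), so $E[p]\simeq\mbb{F}_p(1)\oplus\mbb{F}_p$ over $F$. Then $\mbb{Q}_p(E[p])=F(\mu_p)$, and a point $P=Q+R$ with $Q\in\hat E[p]\setminus\{0\}$ and $R$ in the $F$-rational line has stabilizer $G_{F(\mu_p)}$. This gives (1) and (2) of Theorem~\ref{ord:tame}. Note that $F(\mu_p)$ contains the kernel field, and the kernel field's definition over $\mbb{Q}_p$ is consistent with $u^{-1}\chi$.

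\emph{Wild case.} The class is a nontrivial unit $\eta$, and $FK(\mu_p)=F(\mu_p,\eta^{1/p})$, of degree $fp(p-1)$. The subgroup $\begin{pmatrix}\mbb{F}_p^\times&\mbb{F}_p\\0&C_f\end{pmatrix}$ has exactly $p$ conjugate subgroups of index $p$ of the form $\begin{pmatrix}\mbb{F}_p^\times&0\\0&C_f\end{pmatrix}$, namely the stabilizers of the $p$ lines complementary to the kernel. Their fixed fields are the $p$ degree-$p$ subextensions $\sigma K$, which are all conjugate. Any index-$p$ subgroup contains all elements of order prime to $p$, hence contains a Sylow-prime-to-$p$ complement; so these are all of them, giving the first part of (1).

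The fixed field of the line stabilizer is $F\cdot\sigma K$, and that line consists of $p-1$ nonzero points, each with stabilizer exactly $\begin{pmatrix}\mbb{F}_p^\times&0\\0&1\end{pmatrix}$. This gives (4) and the statement $\mbb{Q}_p(P)=F\cdot\sigma K$. Over $FK$ the whole of $G_{FK}$ fixes a complementary line, which maps isomorphically to $\bar E[p]$; this gives (3). Item (2) is immediate.

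\textbf{Main obstacle: the defining polynomial.} The hard step is showing that $K\simeq\mbb{Q}_p[X]/(X^p+a_p(E)^{-2}pX+p)$. My approach is to compute the ramification break of $K/\mbb{Q}_p$ from $\mbb{Q}_p(E[p])$. By Fontaine's bound, the upper break is at most $1+1/(p-1)$, and it is nontrivial in the wild case, so it equals $p/(p-1)$ exactly. I would then use the classification of degree-$p$ extensions with a given break, namely Propositions~\ref{prop:breaklambda} and \ref{prop:break0} of Section~\ref{sect:Ramification}. A degree-$p$ extension whose Galois closure has group $\mbb{F}_p\rtimes\mbb{F}_p^\times$ acted on by the character $\chi u^{-1}$, i.e.\ ``twisted by $a_p$'', should be forced to have the form $X^p+cpX+p$. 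Here $c$ is determined modulo $p$ through the $\mbb{F}_p^\times$-character on $\mrm{Gal}(K^{\mrm{gal}}/\mbb{Q}_p)$: $-c$ is a $(p-1)$-st power times $a_p^{-2}$, matching $\chi^{-1}$ paired with the kernel character $\chi u^{-1}$, so $u^{2}$ appears. Pinning the exact constant $a_p(E)^{-2}$, rather than only its class, is where I expect the real computation; I would verify it via a Kummer/Artin--Schreier type resolvent computation over $\mbb{Q}_p(\mu_p)$.
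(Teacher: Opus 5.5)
Your handling of the group-theoretic parts is essentially fine, and so is the kernel-point field via Kummer theory of exponent $p-1$. The genuine gap is the ``moreover'' clause of (1), which is the real content of the theorem.

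The break bound and Corollary~\ref{cor:ClassificationLambda} only give $K\simeq\mathbb{Q}_p[X]/(X^p+cpX+p)$ for a unique $c\in\{1,\dots,p-1\}$. Everything then rests on showing $c\equiv a_p(E)^{-2}\pmod p$. That residue is all that is needed, since by Propositions~\ref{prop:ultrametric} and \ref{prop:criteria} it determines $K$; there is no separate issue of ``the exact constant rather than its class''.

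The mechanism you suggest cannot detect $c$:
\begin{itemize}
\item For every $c$, the Galois closure $\widetilde K=K(\gamma)$, with $\gamma^{p-1}=-cp$, is totally ramified over $\mathbb{Q}_p$ with group $\mathbb{F}_p\rtimes\mathbb{F}_p^{\times}$ acting faithfully.
\item The compatibility $F(\sqrt[p-1]{-cp})=F(\mu_p)$ only forces $c\in C_f$, which leaves $f$ candidates.
\item Read in $\mathbb{F}_p^{\times}$, where $(p-1)$-st powers are trivial, your relation ``$-c$ is a $(p-1)$-st power times $a_p^{-2}$'' would give $c\equiv -a_p(E)^{-2}$, the wrong sign.
\end{itemize}
So the deferred ``resolvent computation'' is the entire proof of this clause.

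The missing idea, which the paper uses, is to compare two isomorphisms $W=\mathrm{Gal}(L/F(\mu_p))\to\mathbb{F}_p$, with $L=\mathbb{Q}_p(E[p])$. Compare them under conjugation by some $\sigma_0\in\mathrm{Gal}(L/\mathbb{Q}_p(\mu_p))$ that induces the $p$-th power Frobenius on the residue field of $L$.
\begin{itemize}
\item Representation side: $\rho(\sigma_0)=\begin{pmatrix}a_p(E)^{-1}&*\\0&a_p(E)\end{pmatrix}$. Hence the upper-right entry $b$ of $\rho|_W$ satisfies $b(\sigma_0\tau\sigma_0^{-1})=a_p(E)^{-2}b(\tau)$.
\item Ramification side: let $\pi$ be a root of $X^p+cpX+p$ and $\delta=\tau\pi-\pi$. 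Expanding the polynomial at $\pi+\delta$ gives $v_p(\delta^{p-1}+cp)>1$. One checks from this that $\tau\mapsto(\tau\pi-\pi)/\gamma \bmod \mathbf{m}$ is an isomorphism $W\to\mathbb{F}_p$.
\item Write $\gamma=\alpha\eta$ with $\eta^{p-1}=-p$ and $\alpha^{p-1}=c$. Then $\sigma_0(\eta)=\eta$ and $\sigma_0(\alpha)\equiv\alpha^p=c\alpha$. So conjugation by $\sigma_0$ multiplies this second map by $c$.
\end{itemize}
Two isomorphisms $W\to\mathbb{F}_p$ differ by a scalar, so $c\equiv a_p(E)^{-2}$.

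In your own language: the matrices show that $\mathrm{Gal}(L/\mathbb{Q}_p)$ acts on $W$ by conjugation through $\chi u^{-2}$. What you still need is that this character also equals the Kummer character of $\gamma^{p-1}=-cp$. That identification is exactly the explicit computation with $\tau\pi-\pi$.

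Two smaller repairs:
\begin{itemize}
\item The claim that an index-$p$ subgroup ``contains all elements of order prime to $p$'' is false. For $x\neq 1$, $\begin{pmatrix}x&1\\0&1\end{pmatrix}$ has order prime to $p$ but is not diagonal. The correct reason all index-$p$ subgroups are conjugate is that they are complements to the normal Sylow $p$-subgroup, hence conjugate by Schur--Zassenhaus (or by Lemma~\ref{group}).
\item The fixed field of a line stabilizer is $\sigma K$, not $F\cdot\sigma K$. The field $F\cdot\sigma K$ is the fixed field of the stabilizer of a point on that line.
\end{itemize}
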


We note that the fields $\mbb{Q}_p\left(\sqrt[p-1]{-a_p(E)^{-1}p}\right)$  and $F\cdot \sigma K$ with $\sigma\in \Gamma_K$
appearing in (4) of the above theorem
are all distinct from each other. 
Thus the assertion (4) of the  theorem investigates the field of definition for 
every non-zero point in $E[p]$. 

Combining Theorem \ref{ord:tame} and \ref{ord:wild}, 
we obtain the following criterion for $p$-torsion growth: 
\begin{corollary}\label{cor:torsion-growth}
Let $p$ be an odd prime, let $E$ be an elliptic curve over $\mbb{Q}_p$ 
with good ordinary reduction, and assume that $E(\mbb{Q}_p)[p]=0$.  
Let $f$ be the order of $a_p(E) \pmod{p}$, and let $F/\mbb{Q}_p$ be 
the unramified extension of degree $f$. 
Set $M=\mbb{Q}_p\left(\sqrt[p-1]{-a_p(E)^{-1}p}\right)$. 
Then, for any finite extension $L/\mbb{Q}_p$, we have 
$E(L)[p] \neq 0$ if and only if either $M \subset L$, or one of the following 
holds:
\begin{itemize}
\item[{\rm (i)}] $E(F)[p] \neq 0$ and $F \subset L$; 
\item[{\rm (ii)}] $E(F)[p]=0$ and there exists an extension 
$K/\mbb{Q}_p$ such that $FK\subset L$ and
\[
K\simeq \mbb{Q}_p[X]/(X^p+a_p(E)^{-2}pX+p)
\]
as $\mbb{Q}_p$-algebras.
\end{itemize}
\end{corollary}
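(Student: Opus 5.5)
The proof is a direct reading of Theorems~\ref{ord:tame} and \ref{ord:wild}. The starting observation is that $E(L)[p]\neq 0$ holds if and only if there is a non-zero $P\in E[p]$ with $\mbb{Q}_p(P)\subset L$. Here $\mbb{Q}_p(P)$ denotes the field of definition of $P$ inside $\overline{\mbb{Q}}_p$. Since $L$ is a subfield of $\overline{\mbb{Q}}_p$ and $E[p]$ is stable under $G_{\mbb{Q}_p}$, a $\mbb{Q}_p$-embedding of $\mbb{Q}_p(P)$ into $L$ is the same as an inclusion $\mbb{Q}_p(\tau P)\subset L$ for some conjugate $\tau P$. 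So it suffices to list the fields $\mbb{Q}_p(P)$ for $P\neq 0$ and ask when one of them lies in $L$. Since $E$ has good ordinary reduction and $p$ is odd, Lemma~\ref{lem:tameness} says exactly one of two cases occurs: $E(F)[p]\neq 0$ (tame) or $E(F)[p]=0$ (wild).

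In the tame case, Theorem~\ref{ord:tame}(2) says the fields $\mbb{Q}_p(P)$ for $P\neq 0$ are $F$, $M$ and $F(\mu_p)$. Since $F\subset F(\mu_p)$, some $P\neq 0$ has $\mbb{Q}_p(P)\subset L$ if and only if $M\subset L$ or $F\subset L$. This is the statement with alternative (i), and (ii) cannot occur because its hypothesis $E(F)[p]=0$ fails.

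In the wild case, Theorem~\ref{ord:wild}(4) says every non-zero $P$ has $\mbb{Q}_p(P)=M$ or $\mbb{Q}_p(P)=F\cdot\sigma K$ for some $\sigma\in\Gamma_K$, and each such field actually occurs because $E(F\cdot\sigma K)[p]$ has $p-1$ non-zero points. Hence $E(L)[p]\neq0$ if and only if $M\subset L$ or $F\cdot\sigma K\subset L$ for some $\sigma$. It remains to match the second condition with (ii).
\begin{itemize}
\item[(a)] If $F\cdot\sigma K\subset L$, take the extension $\sigma K$. By Theorem~\ref{ord:wild}(1) it is $\mbb{Q}_p$-isomorphic to $\mbb{Q}_p[X]/(X^p+a_p(E)^{-2}pX+p)$, so (ii) holds.
\item[(b)] Conversely, suppose $K'\subset L$ is $\mbb{Q}_p$-isomorphic to that algebra with $FK'\subset L$. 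Then $K'=\mbb{Q}_p(\beta)$ for a root $\beta\in\overline{\mbb{Q}}_p$ of the polynomial, which is irreducible because it is Eisenstein. The fields $\sigma K$ for $\sigma\in\Gamma_K$ are exactly $\mbb{Q}_p(\beta')$ as $\beta'$ runs over the $p$ roots. So $K'=\sigma K$ for some $\sigma$, and $F\cdot\sigma K\subset L$.
\end{itemize}
Here $F\cdot\sigma K$ means the compositum inside $\overline{\mbb{Q}}_p$, so both directions are compatible with the choice of embedding.

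The only step needing care is (b): interpreting ``there exists an extension $K/\mbb{Q}_p$'' as a subfield of $\overline{\mbb{Q}}_p$ (so that $FK$ is a compositum in $\overline{\mbb{Q}}_p$), and using irreducibility to identify it with some $\sigma K$. Finally, the hypothesis $E(\mbb{Q}_p)[p]=0$ is not used in the argument. Its role is only to make the criterion non-trivial: it rules out $f=1$ in the tame case, where $F=\mbb{Q}_p$ would make $E(L)[p]\neq 0$ for every $L$.
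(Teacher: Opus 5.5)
Your proof is correct and takes the same route as the paper: split according to whether $E(F)[p]$ vanishes, then read off the possible fields $\mbb{Q}_p(P)$ from Theorems~\ref{ord:tame} and \ref{ord:wild}. Your step (b), which identifies an abstract $K'$ with some conjugate $\sigma K$ via the roots of the Eisenstein polynomial, and your accurate remark that $E(\mbb{Q}_p)[p]=0$ only excludes the trivially true case $f=1$, just make explicit what the paper's two-line proof leaves implicit.
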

\begin{proof}
If $E(F)[p] \neq 0$, the result follows from Theorem \ref{ord:tame}. 
If $E(F)[p]=0$, the result follows from Theorem \ref{ord:wild}. 
\end{proof}
\begin{theorem}[$p=2$]
\label{ord:p=2}
 $\mbb{Q}_2(E[2])$ is $\mbb{Q}_2$-isomorphic to 
one of the following fields.
\begin{itemize}
\item[{\rm (i)}] $\mbb{Q}_2$, 
\item[{\rm (ii)}] the unramified quadratic extension $\mbb{Q}_4$ of $\mbb{Q}_2$, 
\item[{\rm (iii)}] $L_1:=\mbb{Q}_2[X]/(X^2+2X+2)$,  
\item[{\rm (iv)}] $L_2:=\mbb{Q}_2[X]/(X^2+2X+6)$.
\end{itemize}
If $\mbb{Q}_2(E[2])\in \{\mbb{Q}_4, L_1, L_2 \}$,
then, $\mbb{Q}_2(P)=\mbb{Q}_2$ for exactly two elements $P$ of $E[2]$
and $\mbb{Q}_2(P)=\mbb{Q}_2(E[2])$ for the other two elements $P$ of $E[2]$.
\end{theorem}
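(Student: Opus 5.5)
The idea is to use the connected--\'etale sequence of the finite flat group scheme $\mathcal{E}[2]$ over $\mbb{Z}_2$, where $\mathcal{E}$ is the N\'eron model. This forces a rational point and bounds the degree by $2$. Fontaine's ramification bound \cite{Fon85} then cuts the possible quadratic extensions down to the listed ones.

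\emph{Step 1 (a rational point in the kernel of reduction).} Since $E$ is ordinary at $2$, $a_2(E)$ is odd, and $\#\bar{E}(\mbb{F}_2)=3-a_2(E)$ is even. As $\bar{E}[2](\overline{\mbb{F}}_2)\simeq \mbb{Z}/2\mbb{Z}$, the reduction map $E[2]\to\bar{E}[2]$ is surjective with kernel $E[2]^0$ of order $2$. So there is an exact sequence of $G_{\mbb{Q}_2}$-modules
\[
0\to E[2]^0\to E[2]\to \bar{E}[2]\to 0 .
\]
Both outer terms have order $2$, hence carry trivial Galois action, since $\mrm{Aut}(\mbb{Z}/2\mbb{Z})=1$. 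In a suitable basis the image of $G=\mrm{Gal}(\mbb{Q}_2(E[2])/\mbb{Q}_2)$ in $GL_2(\mbb{F}_2)$ therefore lies in the unipotent group $\begin{pmatrix}1&*\\0&1\end{pmatrix}$, which has order $2$. Consequently the nonzero point $P_0\in E[2]^0$ is $\mbb{Q}_2$-rational, and $[\mbb{Q}_2(E[2]):\mbb{Q}_2]\le 2$. If $G\neq 1$, its nontrivial element fixes $0$ and $P_0$ and swaps the remaining two points $P_1,P_2$. Hence $\mbb{Q}_2(P_i)=\mbb{Q}_2(E[2])$ for $i=1,2$ and $\mbb{Q}_2(P)=\mbb{Q}_2$ for $P\in\{0,P_0\}$, which is the last assertion of the theorem.

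\emph{Step 2 (ramification bound).} Because $E$ has good reduction, $E[2]$ is the generic fibre of the finite flat group scheme $\mathcal{E}[2]$ over $\mbb{Z}_2$, which is killed by $2$, and the absolute ramification index is $e=1$. Fontaine's theorem \cite{Fon85} then says that $G_{\mbb{Q}_2}^{(u)}$ acts trivially on $E[2]$ for $u>e(1+1/(p-1))-1=1$, in the upper numbering. So the extension $\mbb{Q}_2(E[2])/\mbb{Q}_2$ has upper ramification breaks at most $1$.

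\emph{Step 3 (classification).} The quadratic extensions of $\mbb{Q}_2$ are $\mbb{Q}_2(\sqrt{d})$ with $d\in\{5,-1,-5,\pm2,\pm10\}$. For a quadratic extension the unique break $u$ and the discriminant valuation $\delta$ satisfy $\delta=u+1$.
\begin{itemize}
\item $d=5$ gives the unramified extension $\mbb{Q}_4$.
\item $d=-1,-5$ have $\delta=2$, so break $u=1$; these are allowed by Step 2.
\item $d=\pm2,\pm10$ have $\delta=3$, so break $u=2$; these are excluded by Step 2.
\end{itemize}
Finally, $X^2+2X+2$ has roots $-1\pm\sqrt{-1}$ and $X^2+2X+6$ has roots $-1\pm\sqrt{-5}$, so $L_1=\mbb{Q}_2(\sqrt{-1})$ and $L_2=\mbb{Q}_2(\sqrt{-5})$. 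This gives the list (i)--(iv).

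The main obstacle is getting the normalization in Fontaine's bound right. It must exclude break $2$ while allowing break $1$, so I would check it against the upper-numbering conventions recalled in Section~\ref{sect:Ramification}, or alternatively compute the different bound $v_2(\mathcal{D})<1+1/(p-1)=2$ directly. The theorem only claims that $\mbb{Q}_2(E[2])$ is one of the four fields, so realizability of each case is not needed here; it can be illustrated by the explicit examples later in the section.
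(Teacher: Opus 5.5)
Your proof is correct and is essentially the paper's: the ordinary structure puts the Galois image in the order-$2$ unipotent subgroup, which gives both the degree bound and the count of rational points, and Fontaine's bound then leaves only $\mbb{Q}_4$, $L_1=\mbb{Q}_2(\sqrt{-1})$ and $L_2=\mbb{Q}_2(\sqrt{-5})$. The paper states that bound as $u_{L/\mbb{Q}_2}\le 2$ in its shifted numbering $G^{(m)}=G^{m-1}$, so your Step~2 object is really Serre's $G^{u}$, and it uses Hasse--Arf with Table~\ref{table:degree2} where you list discriminants. One caution: the alternative you mention, Fontaine's different bound $v_2(\mfrak{D}_{L/\mbb{Q}_2})<2$, would not suffice, since $L_3,\dots,L_6$ have $v_2(\mfrak{D})=\frac{3}{2}$; it is the ramification-group bound that excludes them.
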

The elliptic curve 
$E=$\href{https://www.lmfdb.org/EllipticCurve/Q/17a3/}{17a3} 
corresponds to (i), 
$E=$\href{https://www.lmfdb.org/EllipticCurve/Q/15a5/}{15a5} 
to (ii), 
$E=$\href{https://www.lmfdb.org/EllipticCurve/Q/49a4/}{49a4} 
to (iii), 
and 
$E=$\href{https://www.lmfdb.org/EllipticCurve/Q/15a6/}{15a6} 
to (iv). 
Thus, all four cases in Theorem \ref{ord:p=2} can actually occur.

\subsubsection{Case of good supersingular reduction}

In this section,
we assume that $E$ has {\it good supersingular reduction}, that is, 
$\bar{E}(\overline{\mbb{F}}_p)[p]=0$. 
This is equivalent to saying that 
$a_p(E)\equiv 0 \! \mod p$.
By the Hasse bound, we have
$a_p(E)=0$ for $p\ge 5$.

The Galois group of the extension $\mbb{Q}_p(E[p])/\mbb{Q}_p$ is studied in \cite[Proposition 12]{Ser72}.  
Put $G=\mrm{Gal}(\mbb{Q}_p(E[p])/\mbb{Q}_p)$
and denote by $I$ the inertia subgroup of $\mrm{Gal}(\mbb{Q}_p(E[p])/\mbb{Q}_p)$.
Then, under the natural injection $G\hookrightarrow GL_{\mbb{F}_p}(E[p])$,  
the image of $I$ is a non-split Cartan subgroup which is isomorphic to $\mbb{F}_{p^2}^{\times}$
and $G$ is the normalizer of $I$ in $GL_{\mbb{F}_p}(E[p])$ (and thus $[G:I]=2$).

\begin{theorem}
\label{ss}
Let $F$ be the unramified quadratic  extension of $\mbb{Q}_p$.
\begin{itemize}
    \item[{\rm (1)}] $\mbb{Q}_p(E[p])=F(\sqrt[p^2-1]{p})$. 
    \item[{\rm (2)}] The field extension 
$\mbb{Q}_p(E[p])/\mbb{Q}_p$ has exactly $p+1$ subextensions
which are $\mbb{Q}_p$-isomorphic to  
\[
\mbb{Q}_p[X]/(X^{p^2-1}-p).
\]
For each such subextension $K$, 
there exist exactly $p-1$ non-zero points $P$ in $E[p]$
such that $\mbb{Q}_p(P)=K$.
\end{itemize}
\end{theorem}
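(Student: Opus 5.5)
We will use Serre's description of the Galois group recalled above: the inertia group $I$ acts on $E[p]\simeq\mbb{F}_{p^2}$ through the fundamental character of level $2$, with image the full non-split Cartan subgroup $\mbb{F}_{p^2}^{\times}$. Moreover, $G$ is its normalizer, so $[G:I]=2$ and the residue field of $L:=\mbb{Q}_p(E[p])$ is $\mbb{F}_{p^2}$.

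For (1), note first that $L/F$ is totally ramified of degree $p^2-1$ (prime to $p$), hence tamely ramified with cyclic Galois group $I\simeq\mbb{F}_{p^2}^{\times}$. By Kummer theory over $F$ (which contains $\mu_{p^2-1}$), we get $L=F(\sqrt[p^2-1]{u p})$ for some unit $u\in\mcal{O}_F^{\times}$. Since $L$ is tame and totally ramified over $F$, we may replace $u$ modulo $(p^2-1)$-th powers and units $\equiv 1$, so only the class of $\bar u$ in $\mbb{F}_{p^2}^{\times}/(\mbb{F}_{p^2}^{\times})^{p^2-1}=1$ matters. Concretely, $\bar u$ is a $(p^2-1)$-th power in $\mbb{F}_{p^2}$ (every element of $\mbb{F}_{p^2}^\times$ is trivially so, since $x^{p^2-1}=1$), so this needs care. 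The cleaner route is to use Fontaine/Raynaud: $E[p]$ extends to a finite flat group scheme over $\mbb{Z}_p$, its connected part is the whole of $E[p]$, and the formal group has height $2$. The multiplication-by-$p$ series then has the shape $[p](X)=pX+\cdots+\varepsilon X^{p^2}+\cdots$, with Newton polygon a single segment of slope $1/(p^2-1)$. The nonzero $p$-torsion points $P$ thus satisfy $v_p(x(P)/y(P))=1/(p^2-1)$. Choosing the parameter $z$ of the formal group suitably (a Lubin--Tate-type model over $\mbb{Z}_p$, which is possible by Honda theory because $\bar E$ is supersingular and the Frobenius satisfies $\pi^2=-p$ when $a_p=0$; for $p=2,3$ one handles $a_p\equiv 0$ with a unit correction), the leading terms give $z^{p^2-1}\equiv -p\cdot(\text{unit})$. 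This identifies the Kummer generator as $p$ up to a unit, and the unit is absorbed because $-1$ and $\mcal{O}_F^{\times}$ elements reduce to $(p^2-1)$-th powers after Hensel's lemma in the tame extension. This gives $L=F(\sqrt[p^2-1]{p})$.

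For (2), the subextensions of $L/\mbb{Q}_p$ isomorphic to $K_0=\mbb{Q}_p(\sqrt[p^2-1]{p})$ are the fields $\mbb{Q}_p(\zeta\varpi)$, where $\varpi^{p^2-1}=p$ and $\zeta\in\mu_{p^2-1}\subset F$. Two such fields coincide exactly when the ratio of the $\zeta$'s lies in $\mu_{p^2-1}\cap K_0=\mu_{p-1}$, which gives $(p^2-1)/(p-1)=p+1$ fields. Each has degree $p^2-1=[L:\mbb{Q}_p]/2$, so it is the fixed field of an order-$2$ subgroup not contained in $I$, i.e.\ of an element of $G\setminus I$ of order $2$. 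For a nonzero $P$, we have $[\mbb{Q}_p(P):\mbb{Q}_p]=[G:\mrm{Stab}(P)]$. Since $I$ acts simply transitively on the $p^2-1$ nonzero points, $\mrm{Stab}_G(P)$ has order $2$ and meets $I$ trivially. So $\mbb{Q}_p(P)$ has degree $p^2-1$ and is totally ramified over $\mbb{Q}_p$ (its compositum with $F$ is $L$), and by the description above it must be one of the $p+1$ fields $K$. The elements of $G\setminus I$ act as Frobenius-semilinear maps $x\mapsto c\bar x^{p}$ on $\mbb{F}_{p^2}$. Each such involution fixes a $1$-dimensional $\mbb{F}_p$-line, i.e.\ $p-1$ nonzero points, and these lines partition the $p^2-1$ nonzero points into $p+1$ classes. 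Hence each $K$ occurs for exactly $p-1$ points.

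The main obstacle is the precise identification of the Kummer generator in (1): showing that the unit $u$ can be taken to be $1$ rather than some non-norm unit. We will attempt this by computing $\mrm{Frob}$'s action, which is forced to be the standard semilinear one. Writing $L=F(\varpi)$ with $\varpi^{p^2-1}=up$, the Galois group being the semidirect product with an element of order $2$ outside $I$ forces $u\in\mbb{Q}_p^\times\cdot(L^\times)^{p^2-1}$. Since every unit of $\mbb{Z}_p$ is a $(p^2-1)$-th power times a principal unit in the unramified quadratic extension (the norm map $\mbb{F}_{p^2}^\times\to\mbb{F}_p^\times$ being surjective), $u$ can be absorbed.
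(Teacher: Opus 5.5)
Part (2) of your proposal works once (1) is in hand. Part (1), however, has a genuine gap: the step where you absorb the unit is false, and nothing else in the proposal determines that unit.

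\textbf{Why the unit cannot be absorbed.} Your own parenthetical shows the problem. Since $x^{p^2-1}=1$ for every $x\in\mathbb{F}_{p^2}^\times$, the only $(p^2-1)$-th power in $\mathbb{F}_{p^2}^\times$ is $1$. So $\mathbb{F}_{p^2}^\times/(\mathbb{F}_{p^2}^\times)^{p^2-1}$ is all of $\mathbb{F}_{p^2}^\times$, not trivial. Hence $\mathcal{O}_F^\times/(\mathcal{O}_F^\times)^{p^2-1}\cong\mathbb{F}_{p^2}^\times$, because Hensel's lemma only removes principal units. Kummer theory then gives $F(\sqrt[p^2-1]{up})=F(\sqrt[p^2-1]{p})$ if and only if $\bar u=1$.

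In particular, $F(\sqrt[p^2-1]{-p})\neq F(\sqrt[p^2-1]{p})$ for odd $p$. So the residue of the unit in your $z^{p^2-1}\equiv -p\cdot(\text{unit})$ is exactly the invariant that has to be computed.

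Your fallback in the last paragraph fails for the same reason. Surjectivity of the norm $\mathbb{F}_{p^2}^\times\to\mathbb{F}_p^\times$ says that elements of $\mathbb{F}_p^\times$ are $(p+1)$-th powers in $\mathbb{F}_{p^2}^\times$, not $(p^2-1)$-th powers. Frobenius-stability of the Kummer line only gives $\bar u\in\mathbb{F}_p^\times$. That still leaves $p-1$ pairwise distinct fields $F(\sqrt[p^2-1]{\zeta p})$, $\zeta\in\mu_{p-1}$. Each is Galois over $\mathbb{Q}_p$ with group $\mu_{p^2-1}\rtimes\mathbb{Z}/2\mathbb{Z}$, so the group-theoretic information you invoke cannot single out $\zeta=1$.

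\textbf{How to close the gap.} Some arithmetic input about $E$ is needed; three options:
\begin{itemize}
\item The paper uses Honda theory (Kobayashi, Pollack): over $\mathcal{O}_F$, $\hat E$ is isomorphic to the Lubin--Tate group for the uniformizer $-p$. Its torsion is cut out by $-pX+X^{p^2}$, i.e.\ by $X^{p^2-1}=p$, with no unit at all.
\item Within your Newton-polygon approach, compute the leading coefficient modulo $p$. With $\phi$ the Frobenius endomorphism of $\bar E$, one has $[p]=a_p\phi-\phi^2$ and $p\mid a_p$. Hence $[p]_{\bar E}(z)=-z^{p^2}+O(z^{p^2+1})$, so the coefficient of $z^{p^2}$ is $\equiv-1$. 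Every nonzero $p$-torsion point then satisfies $z^{p^2-1}=p\,w$ with $w$ a principal unit of $\mathbb{Q}_p(z)$. Only now may Hensel legitimately remove $w$.
\item Using only Serre's description and the Weil pairing: $\mu_p\subset L$ forces $F(\sqrt[p-1]{\zeta p})=F(\sqrt[p-1]{-p})$. Since $\mu_{p-1}\cap(F^\times)^{p-1}=\{\pm1\}$, this gives $\zeta=\pm1$. For odd $p$, $\zeta=-1$ is impossible. The nontrivial element of $\mathrm{Gal}(L/\mathbb{Q}_p(\sqrt[p^2-1]{-p}))$ would fix $\sqrt[p-1]{-p}$, hence $\mu_p$, so its determinant on $E[p]$ would be $1$. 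But every involution $x\mapsto cx^p$ in $G\setminus I$ has $c^{p+1}=1$ and determinant $-c^{p+1}=-1$.
\end{itemize}

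\textbf{On part (2).} Your argument is correct given (1), and it takes a different route from the paper's. The paper produces a nonzero point $P+\sigma_0P$ fixed by $\mathrm{Gal}(L/\mathbb{Q}_p(\sqrt[p^2-1]{p}))$ and then uses transitivity. You instead match the $p+1$ involutions $x\mapsto cx^p$ with $c^{p+1}=1$, each fixing an $\mathbb{F}_p$-line, to the $p+1$ conjugates of $\mathbb{Q}_p(\sqrt[p^2-1]{p})$. This makes the count of $p-1$ points per field explicit, where the paper leaves it implicit.
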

Note that the assertion (2) of the above theorem investigates the field of definition for 
every non-zero point in $E[p]$. 
In particular, it shows that every non-zero point in 
$E[p]$ generates a totally ramified extension of degree $p^2-1$.
\begin{corollary}\label{cor:torsion-ss}
Let $E$ be an elliptic curve with good supersingular reduction over $\mbb{Q}_p$.
Then, for any finite extension $L/\mbb{Q}_p$, we have
$E(L)[p]\neq 0$ if and only if $L$ contains a subfield
$K/\mbb{Q}_p$ such that
\[
K\simeq \mbb{Q}_p[X]/(X^{p^2-1}-p).
\]
\end{corollary}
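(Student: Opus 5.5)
The plan is to deduce Corollary~\ref{cor:torsion-ss} directly from Theorem~\ref{ss}(2), proving the two directions separately.

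For the forward direction, suppose $E(L)[p]\neq 0$ and pick a non-zero $P\in E(L)[p]$. Then $\mbb{Q}_p(P)\subset L$. By Theorem~\ref{ss}(2), the non-zero points of $E[p]$ are partitioned according to which of the $p+1$ distinguished subextensions they generate. There are $p+1$ such subextensions $K$, each generated by exactly $p-1$ non-zero points, and $(p+1)(p-1)=p^2-1=\#(E[p]\smallsetminus\{0\})$. So every non-zero $P$ satisfies $\mbb{Q}_p(P)=K$ for some $K\simeq\mbb{Q}_p[X]/(X^{p^2-1}-p)$, and this $K$ is a subfield of $L$.

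For the converse, suppose $L$ contains a subfield $K'$ with $K'\simeq \mbb{Q}_p[X]/(X^{p^2-1}-p)$. Here I must be careful, because $K'$ is only abstractly isomorphic to the model and need not be one of the $p+1$ subextensions of $\mbb{Q}_p(E[p])$. The natural route is to show that every embedding of the model into $\overline{\mbb{Q}}_p$ lands inside $\mbb{Q}_p(E[p])$.

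By Theorem~\ref{ss}(1), $\mbb{Q}_p(E[p])=F(\sqrt[p^2-1]{p})$. Since $F=\mbb{Q}_{p^2}$ contains $\mu_{p^2-1}$, this field contains every root of $X^{p^2-1}-p$: if $\alpha$ is one root, the others are $\zeta\alpha$ with $\zeta\in\mu_{p^2-1}\subset F$. Hence $K'=\mbb{Q}_p(\beta)$ for some root $\beta$ of $X^{p^2-1}-p$, so $K'\subset\mbb{Q}_p(E[p])$.

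Now $K'$ is a subextension of $\mbb{Q}_p(E[p])$ that is $\mbb{Q}_p$-isomorphic to the model. Theorem~\ref{ss}(2) says there are exactly $p+1$ such subextensions. The roots of $X^{p^2-1}-p$ generate at most $(p^2-1)/[\text{roots per field}]$ distinct fields. I would check that this count is exactly $p+1$, which would identify $K'$ with one of the listed $K$. Alternatively, I can read the statement of Theorem~\ref{ss}(2) as saying that all subextensions isomorphic to the model are exactly these $p+1$.

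Concretely, each field $\mbb{Q}_p(\beta)$ contains $\zeta\beta$ exactly when $\zeta\in\mu_{p^2-1}\cap\mbb{Q}_p=\mu_{p-1}$. So each such field contains $p-1$ roots, and there are $(p^2-1)/(p-1)=p+1$ distinct fields, which matches. Hence $K'=K$ for one of them, and by Theorem~\ref{ss}(2) there is a non-zero $P$ with $\mbb{Q}_p(P)=K'\subset L$, so $E(L)[p]\neq0$.

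The only real subtlety is the converse direction: confirming that an abstract copy of the model field inside $L$ coincides with one of the subextensions $K$ of $\mbb{Q}_p(E[p])$. The root count above settles this.
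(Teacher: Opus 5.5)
Your proposal is correct and matches the paper, which treats the corollary as an immediate consequence of Theorem~\ref{ss}: every non-zero point generates one of the $p+1$ subextensions (your count $(p+1)(p-1)=p^2-1$), and any copy of $\mbb{Q}_p[X]/(X^{p^2-1}-p)$ in $\overline{\mbb{Q}}_p$ lies in $F(\sqrt[p^2-1]{p})$ because $\mu_{p^2-1}\subset F$, hence is one of those subextensions. The final root count is redundant given Theorem~\ref{ss}(2); as written it should intersect with $\mbb{Q}_p(\beta)$ rather than $\mbb{Q}_p$, i.e.\ use $\mu_{p^2-1}\cap\mbb{Q}_p(\beta)=\mu_{p-1}$, which holds since $\mbb{Q}_p(\beta)/\mbb{Q}_p$ is totally ramified.
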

\subsection{Applications to \texorpdfstring{$p$}{p}-torsion over local and global fields}\label{subsec:applications}
In this subsection, we apply the classification of the fields of definition 
of $p$-torsion points over local and global fields.

\subsubsection{Local fields}
We first give criteria for the existence of $p$-torsion over 
unramified and Lubin-Tate extensions of $\mbb{Q}_p$.
\begin{corollary}\label{cor:unramified-torsion}
Let $E$ be an elliptic curve over $\mbb{Q}_p$ with good reduction, 
and let $L/\mbb{Q}_p$ be a finite unramified extension.
Suppose that $p>2$ and $E$ has ordinary reduction. 
Let $f$ be the order of $a_p(E) \pmod{p}$, and let $F/\mbb{Q}_p$ 
be the unramified extension of degree $f$. 
Then $E(L)[p] \neq 0$ if and only if $E(F)[p] \neq 0$ and $f \mid [L:\mbb{Q}_p]$.
\end{corollary}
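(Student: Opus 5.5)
The plan is to read off the answer from the list of fields of definition in Theorems \ref{ord:tame} and \ref{ord:wild}: a nonzero $P\in E[p]$ is rational over $L$ exactly when $\mbb{Q}_p(P)\subset L$. Since $L/\mbb{Q}_p$ is unramified, it contains a given finite extension of $\mbb{Q}_p$ only if that extension is unramified. So the question becomes: which of the fields $\mbb{Q}_p(P)$, $P\neq 0$, are unramified over $\mbb{Q}_p$, and when are they contained in $L$?

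First we show that if $E(F)[p]=0$ (wild case), then no nonzero $P$ has $\mbb{Q}_p(P)$ unramified. By Theorem \ref{ord:wild}(4), each $\mbb{Q}_p(P)$ is either $\mbb{Q}_p(\sqrt[p-1]{-a_p(E)^{-1}p})$ or $F\cdot\sigma K$. The first is totally ramified of degree $p-1\ge 2$, because $p$ is odd and $v_p$ of the radicand is $1$. The second contains $\sigma K$, which is $\mbb{Q}_p[X]/(X^p+a_p(E)^{-2}pX+p)$. This polynomial is Eisenstein (its middle coefficient and constant term are divisible by $p$, and $v_p$ of the constant term is $1$), so $\sigma K$ is totally ramified of degree $p$. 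Hence $E(L)[p]=0$, which proves the forward implication in this case.

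Next, assume $E(F)[p]\neq0$ (tame case). By Theorem \ref{ord:tame}(2), the possible fields $\mbb{Q}_p(P)$ are $F$, the ramified field $\mbb{Q}_p(\sqrt[p-1]{-a_p(E)^{-1}p})$, and $F(\mu_p)$. The last is ramified since $\mu_p\not\subset$ any unramified extension when $p>2$. So $E(L)[p]\neq0$ if and only if $F\subset L$. Unramified extensions of $\mbb{Q}_p$ are determined by their degree, and $\mbb{Q}_{p^f}\subset\mbb{Q}_{p^n}$ if and only if $f\mid n$. Therefore $E(L)[p]\neq0$ exactly when $f\mid[L:\mbb{Q}_p]$.

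Combining the two cases gives the stated equivalence. No step is a real obstacle; the only points needing care are the Eisenstein check for $K$ and the observation that $p-1\geq 2$ because $p$ is odd.
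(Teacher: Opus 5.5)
Your proposal is correct and follows the same route as the paper, whose proof simply says the corollary follows from Theorems~\ref{ord:tame} and~\ref{ord:wild}. You have made explicit the check the paper leaves implicit: among the fields $\mbb{Q}_p(P)$ listed in those theorems, the only one unramified over $\mbb{Q}_p$ is $F$, and it occurs only in the tame case.
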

\begin{proof}
The assertion follows from Theorem \ref{ord:tame} 
and \ref{ord:wild}. 
\end{proof}

\begin{remark}
When $p=2$ and $E$ has ordinary reduction, 
we have $E(\mbb{Q}_2)[2] \neq 0$ by Theorem \ref{ord:p=2}. 
The possible torsion subgroups $E(\mbb{Q}_2)_{\mrm{tor}}$ were determined 
in \cite[Theorem 1.1(2)]{OzYo25}. 
\end{remark}
\begin{theorem}\label{thm:lubin-tate-torsion}
Let $p$ be an odd prime, let $n\geq 1$, and let $E$ be an elliptic curve over $\mbb{Q}_p$ 
with good ordinary reduction. Assume that $E(\mbb{Q}_p)[p]=0$. 
For $u \in \mbb{Z}_p^{\times}$, let $L_{\pi,n}/\mbb{Q}_p$ denote 
the $n$-th Lubin-Tate extension 
associated with the uniformizer $\pi=up$. 
Then $E(L_{\pi,n})[p] \neq 0$ if and only if $u \equiv a_p(E)^{-1} \pmod{p}$. 
\end{theorem}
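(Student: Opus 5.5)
The plan is to reduce everything to Corollary~\ref{cor:torsion-growth} applied to $L=L_{\pi,n}$. Then I show that conditions (i) and (ii) there can never hold for a Lubin--Tate extension. After that, the criterion collapses to $M\subset L_{\pi,n}$, where $M=\mbb{Q}_p\bigl(\sqrt[p-1]{-a_p(E)^{-1}p}\bigr)$. This last containment is a Kummer-theoretic comparison of two tame extensions.

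First I recall the standard facts about $L_{\pi,n}$: it is a totally ramified abelian extension of $\mbb{Q}_p$ of degree $p^{n-1}(p-1)$. Take the Lubin--Tate polynomial $[\pi](X)=\pi X+X^p$. Then the nonzero $\pi$-torsion points are the roots of $X^{p-1}=-\pi$, so $L_{\pi,1}=\mbb{Q}_p\bigl(\sqrt[p-1]{-up}\bigr)$. Moreover $L_{\pi,n}/L_{\pi,1}$ has degree $p^{n-1}$. Hence $L_{\pi,1}$ is the maximal tamely ramified subextension of $L_{\pi,n}$.

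Next I rule out (i) and (ii) of Corollary~\ref{cor:torsion-growth}.
\begin{itemize}
\item Since $L_{\pi,n}$ is totally ramified, $F\subset L_{\pi,n}$ forces $F=\mbb{Q}_p$, that is, $f=1$.
\item In case (i), $f=1$ would give $E(\mbb{Q}_p)[p]\neq 0$, contradicting the hypothesis.
\item In case (ii), we would again need $f=1$, together with a subfield $K\subset L_{\pi,n}$ isomorphic to $\mbb{Q}_p[X]/(X^p+a_p(E)^{-2}pX+p)$. By Theorem~\ref{ord:wild}(1), this $K$ has $p$ distinct conjugates inside $\mbb{Q}_p(E[p])$, so $K/\mbb{Q}_p$ is not Galois.
\item But every subfield of the abelian extension $L_{\pi,n}/\mbb{Q}_p$ is Galois over $\mbb{Q}_p$. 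So (ii) is impossible as well.
\end{itemize}
Therefore $E(L_{\pi,n})[p]\neq0$ if and only if $M\subset L_{\pi,n}$.

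Now $M$ is totally and tamely ramified of degree $p-1$, since $v_p(-a_p(E)^{-1}p)=1$ and the Eisenstein criterion applies. So $M\subset L_{\pi,n}$ if and only if $M\subset L_{\pi,1}$. As both fields have degree $p-1$, this happens if and only if $M=L_{\pi,1}$.

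Because $\mu_{p-1}\subset\mbb{Q}_p$, Kummer theory gives $\mbb{Q}_p(\sqrt[p-1]{a})=\mbb{Q}_p(\sqrt[p-1]{b})$ if and only if $a$ and $b$ generate the same subgroup of $\mbb{Q}_p^\times/(\mbb{Q}_p^\times)^{p-1}$. I will check this directly by valuations: both elements have valuation $1$, so the only possibility is that $a/b$ is a $(p-1)$-th power. Here $a/b=u\,a_p(E)$ is a unit. A unit is a $(p-1)$-th power in $\mbb{Z}_p^\times=\mu_{p-1}\times(1+p\mbb{Z}_p)$ exactly when it is $\equiv1\pmod p$, because $1+p\mbb{Z}_p$ is uniquely $(p-1)$-divisible. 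Thus the condition is $u\equiv a_p(E)^{-1}\pmod p$.

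The main obstacle is making the exclusion of (ii) airtight, namely that $K$ is genuinely non-Galois. I would take this from the statement of Theorem~\ref{ord:wild}(1): the $p$ degree-$p$ subextensions are conjugate, so they are distinct. The identification of the maximal tame subfield of $L_{\pi,n}$ is standard.
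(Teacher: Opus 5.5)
Your proposal is correct and follows the paper's proof. You reduce via Corollary~\ref{cor:torsion-growth}, exclude (i) and (ii) because $L_{\pi,n}$ is totally ramified and abelian, and then reduce to $M=L_{\pi,1}=\mbb{Q}_p(\sqrt[p-1]{-up})$. The differences are only in how two steps are justified: you get non-Galoisness of $K$ from the conjugacy statement in Theorem~\ref{ord:wild}(1) rather than from Proposition~\ref{prop:GaloisCriteria}, and you settle $M=L_{\pi,1}$ directly by Kummer theory rather than by the paper's count of totally ramified abelian extensions of degree $p-1$, which makes your last step more self-contained. One small slip: with your $a$ and $b$, the ratio is $a/b=(u\,a_p(E))^{-1}$, not $u\,a_p(E)$, but this changes nothing.
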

\begin{proof}
Put
$M=\mbb{Q}_p\left(\sqrt[p-1]{-a_p(E)^{-1}p}\right)$.
Let $f$ be the order of $a_p(E)\pmod p$, and let
$F/\mbb{Q}_p$ be the unramified extension of degree $f$, as in
Corollary~\ref{cor:torsion-growth}. The extension
$L_{\pi,n}/\mbb{Q}_p$ is totally ramified and abelian. If $f>1$,
neither alternative (i) nor alternative (ii) of that corollary can
occur, since either one would force $F\subset L_{\pi,n}$. If $f=1$,
then $F=\mbb{Q}_p$; alternative (i) contradicts
$E(\mbb{Q}_p)[p]=0$, while in alternative (ii) the extension
$K/\mbb{Q}_p$ is non-Galois by
Proposition~\ref{prop:GaloisCriteria}, and hence cannot be contained
in the abelian extension $L_{\pi,n}$. Therefore,
$E(L_{\pi,n})[p]\neq 0$ if and only if $M\subset L_{\pi,n}$.
The extension $M/\mbb{Q}_p$ is tamely ramified of degree $p-1$,
and $L_{\pi,1}$ is the unique tamely ramified subextension of
$L_{\pi,n}/\mbb{Q}_p$ of degree $p-1$. Thus the last inclusion is
equivalent to $L_{\pi,1}=M$.

Note that
\[
L_{\pi,1}=\mbb{Q}_p\left(\sqrt[p-1]{-up}\right).
\]
For $u,u'\in\mbb{Z}_p^{\times}$, if $u\equiv u'\pmod p$, then
\[
\mbb{Q}_p\left(\sqrt[p-1]{-up}\right)
=\mbb{Q}_p\left(\sqrt[p-1]{-u'p}\right).
\]
This also follows from Proposition~\ref{prop:criteria}. Since there
are exactly $p-1$ totally ramified abelian extensions of
$\mbb{Q}_p$ of degree $p-1$, there is a bijection between the
residue classes $u\pmod p$ and these extensions. Thus the result
follows.
\end{proof}
\begin{remark}
When $u=1$, the Lubin--Tate extension $L_{p,n}/\mbb{Q}_p$ coincides with
the cyclotomic extension $\mbb{Q}_p(\mu_{p^n})/\mbb{Q}_p$.
In this case, \cite[Proposition 3.3]{OzYo25} gives
\[
E(\mbb{Q}_p(\mu_{p^n}))[p]=0
\quad\Longleftrightarrow\quad
\bar{E}(\mbb{F}_p)[p]=0.
\]
\end{remark}
\subsubsection{Number fields}
For a prime number $p$ and an elliptic curve $E/\mbb{Q}$, 
we define 
\[ f_p(E)=
\begin{cases}
    \mrm{ord}_{\mbb{F}_p^{\times}} \left(a_p(E)\right) & 
    \text{if $E$ has ordinary reduction at $p$}, \\
    p^2-1 & \text{if $E$ has supersingular reduction at $p$}.
\end{cases}
\]
The following theorem gives a local criterion for the existence 
of $p$-torsion over number fields.
\begin{theorem}\label{thm:global}
Let $E/\mbb{Q}$ be an elliptic curve and $K/\mbb{Q}$ 
a number field of degree $d$. 
Let $p$ be an odd prime of good reduction for $E$. 
If $f_p(E)>d$, then $E(K)[p]=0$. 
\end{theorem}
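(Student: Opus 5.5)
We argue by contraposition: suppose $E(K)[p]\neq 0$ and take a nonzero $P\in E(K)[p]$. We will show that $d\ge f_p(E)$. Choose a prime $\mathfrak{p}$ of $K$ above $p$ and let $K_{\mathfrak{p}}$ be the completion. Through the embedding $K\hookrightarrow K_{\mathfrak{p}}\hookrightarrow\overline{\mbb{Q}}_p$, the point $P$ becomes a nonzero point of $E(K_{\mathfrak{p}})[p]$, where $E$ is now viewed over $\mbb{Q}_p$ with good reduction. Since $[K_{\mathfrak{p}}:\mbb{Q}_p]\le d$, it is enough to prove the local statement: if $L/\mbb{Q}_p$ is finite and $E(L)[p]\neq 0$, then $[L:\mbb{Q}_p]\ge f_p(E)$. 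Because $P$ is defined over $L$, we have $\mbb{Q}_p(P)\subset L$, so in fact it suffices to show $[\mbb{Q}_p(P):\mbb{Q}_p]\ge f_p(E)$ for every nonzero $P\in E[p]$.

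\emph{Supersingular case.} Theorem~\ref{ss}(2) says every nonzero $P$ has $\mbb{Q}_p(P)\simeq\mbb{Q}_p[X]/(X^{p^2-1}-p)$. The polynomial $X^{p^2-1}-p$ is Eisenstein, so this field has degree $p^2-1=f_p(E)$, and the local bound follows.

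\emph{Ordinary case.} Here $f=f_p(E)$ is the order of $a_p(E)$ in $\mbb{F}_p^\times$, so $f\mid p-1$ and in particular $f\le p-1$. We use Theorems~\ref{ord:tame} and \ref{ord:wild}, which apply since $p$ is odd.
\begin{itemize}
\item If $P$ lies in the kernel of reduction, then $\mbb{Q}_p(P)=\mbb{Q}_p(\sqrt[p-1]{-a_p(E)^{-1}p})$. This is an Eisenstein extension of degree $p-1\ge f$.
\item In the tame case with $P\in E(F)[p]$, we get $\mbb{Q}_p(P)=F$, which has degree $f$.
\item In the remaining tame case, $\mbb{Q}_p(P)=F(\mu_p)$, which has degree $f(p-1)\ge f$.
\item In the wild case, $\mbb{Q}_p(P)=F\cdot\sigma K$ contains $F$, so its degree is at least $f$.
\end{itemize}
In every case $[\mbb{Q}_p(P):\mbb{Q}_p]\ge f$.

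The only points needing care are, first, that the reduction of $E/\mbb{Q}$ at $p$ agrees with that of $E/\mbb{Q}_p$, so that $a_p(E)$ and $f_p(E)$ coincide for the two curves; and second, the trivial inequality $[K_{\mathfrak{p}}:\mbb{Q}_p]\le[K:\mbb{Q}]$. The field $M=\mbb{Q}_p(\sqrt[p-1]{-a_p(E)^{-1}p})$ is the only one not containing $F$, and it is handled by $f\mid p-1$. I expect no real obstacle: the whole content lies in the earlier classification theorems. Combining the local bound with the reduction step gives $d\ge f_p(E)$ whenever $E(K)[p]\neq 0$, which is the contrapositive of Theorem~\ref{thm:global}.
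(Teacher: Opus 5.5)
Your proposal is correct and follows the paper's own proof. Both localize at a prime above $p$, use $[K_w:\mbb{Q}_p]\le d$, and read off $[\mbb{Q}_p(P):\mbb{Q}_p]\ge f_p(E)$ from Theorems~\ref{ord:tame}, \ref{ord:wild} and \ref{ss}. Your explicit case check spells out what the paper leaves implicit, notably that $\mbb{Q}_p(\sqrt[p-1]{-a_p(E)^{-1}p})$ has degree $p-1\ge f$ because $f\mid p-1$.
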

\begin{proof}
Let $w$ be a prime of $K$ above $p$, and let $K_w$ denote the completion 
of $K$ at $w$. 
Suppose that $E(K)[p] \neq 0$. 
Then $E(K_w)[p] \neq 0$. 
Put $d_w=[K_w:\mbb{Q}_p]$. 
By Theorem \ref{ord:tame} and \ref{ord:wild}, 
together with Corollary \ref{cor:torsion-ss}, 
we have $d_w \geq f_p(E)$. 
Hence $d \geq d_w \geq f_p(E)$, which contradicts the assumption $f_p(E)>d$. 
\end{proof}
\begin{remark}
For a fixed elliptic curve $E/\mbb{Q}$ and a fixed number field $K$,
the Mordell--Weil theorem implies that $E(K)_{\mrm{tor}}$ is finite.
In particular, $E(K)[p]=0$ for all sufficiently large primes $p$.
Theorem~\ref{thm:global} provides an explicit sufficient condition
for this vanishing at a given odd prime $p$ of good reduction,
namely $f_p(E)>[K:\mbb{Q}]$.
Here $f_p(E)$ is the multiplicative order of $a_p(E)$ modulo $p$
in the ordinary case, and $f_p(E)=p^2-1$ in the supersingular case.
\end{remark}
\subsection{Ramification breaks of the field of \texorpdfstring{$p$}{p}-torsion points}
As a consequence, we obtain the ramification break of 
$\mbb{Q}_p(E[p])/\mbb{Q}_p$. 
For a finite extension $L$ of $K$, 
we denote the different of $L/K$ by $\mathfrak{D}_{L/K}$ 
and the upper numbering ramification break of $L/K$ 
by $u_{L/K}$ (see Section \ref{sect:Ramification} for details). 

\subsubsection{Case of good reduction}
\begin{theorem}\label{cor:defferent}
    Let $E$ be an elliptic curve over $\mbb{Q}_p$ with good reduction.
    Put $L=\mbb{Q}_p(E[p])$.
    If $\bar{E}$ is ordinary, let $f$ be the order of
    $a_p(E)\pmod p$ and let $F$ be the unramified extension of
    $\mbb{Q}_p$ of degree $f$.
\begin{itemize}    
    \item[{\rm (1)}]
    Suppose $\bar{E}$ is ordinary and $p \geq 3$.
    Then we have:
    \begin{itemize}
        \item[\rm (i)]
        $v_p(\mfrak{D}_{L/\mbb{Q}_p})=\frac{p-2}{p-1}$ and 
        $u_{L/\mbb{Q}_p}=1$ if $E(F)[p] \not=0$,
        \item[\rm (ii)] 
        $v_p(\mfrak{D}_{L/\mbb{Q}_p})=\frac{p^2-2}{p^2-p}$ and 
        $u_{L/\mbb{Q}_p}=\frac{p}{p-1}$ if $E(F)[p]=0$, 
    \end{itemize}

    \item[{\rm (2)}] 
    Suppose $\bar{E}$ is ordinary and $p=2$. 
    Then we have:
    \begin{itemize}
    \item[\rm (i)] 
    $v_p(\mfrak{D}_{L/\mbb{Q}_2})=u_{L/\mbb{Q}_2}=0$ if 
    $L=\mbb{Q}_2$ or $L=\mbb{Q}_4$,
    \item[\rm (ii)]
    $v_p(\mfrak{D}_{L/\mbb{Q}_2})=1$ and $u_{L/\mbb{Q}_2}=2$ if $L/\mbb{Q}_2$ 
    is ramified.
    \end{itemize}

    \item[{\rm (3)}] 
    Suppose $\bar{E}$ is supersingular.
    Then we have 
    $v_p(\mfrak{D}_{L/\mbb{Q}_p})=\frac{p^2-2}{p^2-1}$ and 
    $u_{L/\mbb{Q}_p}=1$.
\end{itemize}
\end{theorem}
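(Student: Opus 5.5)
The plan is to read off both invariants directly from the explicit descriptions of $L=\mbb{Q}_p(E[p])$ in Theorems~\ref{ord:tame}, \ref{ord:wild}, \ref{ord:p=2} and \ref{ss}. Two standard facts do the work. First, unramified extensions contribute nothing to the different or to the breaks. Second, the different is multiplicative in towers: $\mfrak{D}_{L/\mbb{Q}_p}=\mfrak{D}_{L/K}\mfrak{D}_{K/\mbb{Q}_p}$. For the upper break I will use the normalization of Section~\ref{sect:Ramification}. The statement itself requires that this normalization give $u=0$ for unramified and $u=1$ for tamely (nontrivially) ramified extensions, so I take $u_{L/\mbb{Q}_p}$ to be $1$ plus the largest upper break in Serre's numbering (for Galois $L$ with nontrivial wild inertia). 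I then compute that break by Hilbert's formula
\[
v_L(\mfrak{D}_{L/\mbb{Q}_p})=\sum_{i\ge 0}(|G_i|-1),
\]
followed by Herbrand's function $\varphi$.

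\emph{Tame cases.} In (1)(i), Theorem~\ref{ord:tame} gives $L=F(\mu_p)$, which has $e=p-1$ and is tame. Hence $v_p(\mfrak{D})=(e-1)/e=(p-2)/(p-1)$ and $u=1$. In (3), Theorem~\ref{ss} gives $L=F(\sqrt[p^2-1]{p})$, which has $e=p^2-1$ and is tame. Hence $v_p(\mfrak{D})=(p^2-2)/(p^2-1)$ and $u=1$. In (2)(i) the field $L$ is $\mbb{Q}_2$ or $\mbb{Q}_4$, so everything vanishes.

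\emph{Case (2)(ii).} By Theorem~\ref{ord:p=2}, $L$ is defined by the Eisenstein polynomial $g(X)=X^2+2X+c$ with $c\in\{2,6\}$. For a root $\pi$,
\[
v_2\bigl(g'(\pi)\bigr)=v_2(2\pi+2)=1,
\]
so $v_L(\mfrak{D})=2$. Hilbert's formula then reads $2=(2-1)(i+1)$, so the lower break is $i=1$. Since the extension is quadratic, this is also the Serre upper break, and therefore $u=2$.

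\emph{Case (1)(ii).} By Theorem~\ref{ord:wild}, $L=FK(\mu_p)$ with $K$ defined by $X^p+a_p(E)^{-2}pX+p$, which is Eisenstein. For a root $\pi$ we have $v_p(\pi)=1/p$. The derivative at $\pi$ is
\[
p\pi^{p-1}+a_p(E)^{-2}p,
\]
whose first term has valuation $1+(p-1)/p>1$; hence the derivative has valuation exactly $1$, and $v_p(\mfrak{D}_{K/\mbb{Q}_p})=1$. The extension $L/K$ has ramification index $p-1$ and is tame, because the total ramification index is $p(p-1)$. So
\[
v_p(\mfrak{D}_{L/K})=\frac{p-2}{p(p-1)},
\]
and summing gives $v_p(\mfrak{D}_{L/\mbb{Q}_p})=(p^2-2)/(p^2-p)$.

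For the break, $L$ is Galois with inertia $I\simeq\mbb{F}_p\rtimes\mbb{F}_p^\times$, so the wild subgroup has order $p$ and a single lower break $i$. Since $e=p(p-1)$, we get $v_L(\mfrak{D})=p^2-2$. Hilbert's formula gives
\[
p^2-2=(p(p-1)-1)+i(p-1),
\]
so $i=1$. Then Herbrand gives $\varphi(1)=|G_1|/|G_0|=1/(p-1)$, and therefore $u=1+1/(p-1)=p/(p-1)$.

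I expect the main obstacle to be confirming that the normalization of $u_{L/K}$ in Section~\ref{sect:Ramification} is the one shifted by $1$ from Serre's, rather than some other convention. If instead it is defined directly via $v_K(\mfrak{D})$, for example through Fontaine's formula relating the different to the upper break for Galois extensions, the same lower-break computations above feed into that formula unchanged. The only delicate point is that in case (1)(ii) the relevant extension $L$, not $K$, is Galois, so the break must be computed for $L$.
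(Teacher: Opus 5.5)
Your proposal is correct. In the tame cases (1)(i) and (3), and in case (2), it is essentially the paper's argument; the paper reads (2)(ii) off Table~\ref{table:degree2} instead of differentiating $X^2+2X+c$, which amounts to the same computation.

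The genuine difference is in case (1)(ii). The paper inserts $L=FK(\mu_p)$, with $FK/F$ of type $\langle 1,1,\omega\rangle$, into Proposition~\ref{prop:breaklambda}, which works \emph{break first}:
\begin{itemize}
\item Propositions~\ref{prop:composite} and \ref{prop:BreakDegreeP} give $u_{L/F}=\max\{u_{FK/F},u_{F(\mu_p)/F}\}=p/(p-1)$.
\item The explicit uniformizer $(\zeta_p-1)/\pi_K$ gives $i_{L/F}=2/(p(p-1))$.
\item The different then follows from Corollary~\ref{cor:GaloisClosure}, $v_p(\mfrak{D})=u-i$.
\end{itemize}
You work \emph{different first}: transitivity in the tower $\mbb{Q}_p\subset K\subset L$ (the Eisenstein derivative for $K/\mbb{Q}_p$, plus $e(L/K)-1$ for the tame layer), then Hilbert's formula to get the lower break $1$, then Herbrand's $\varphi$ to convert it.

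What each route buys:
\begin{itemize}
\item Yours needs only Serre's classical ramification theory for Galois extensions. It avoids the Deligne--Helou theory on $\Gamma(L/K)$ and the composite formula, and uses nothing about the Amano type beyond $v_p(\mfrak{D}_{K/\mbb{Q}_p})=1$.
\item The paper's packages the computation for arbitrary $\lambda$, and it reuses this in the bad-reduction theorems to read $\lambda$ and the break off Kraus' differents. Your tower computation extends to general $\lambda$ as well, giving $v_p(\mfrak{D})=(p^2+(\lambda-1)p-\lambda-1)/(p(p-1))$.
\end{itemize}

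The normalization you flagged as the main obstacle is settled by the paper's definition $\Gamma(L/K)^{(m)}=\Gamma(L/K)^{m-1}$: $u_{L/K}$ is exactly one plus Serre's largest upper break. Corollary~\ref{cor:GaloisClosure} confirms your numbers: $u=v_p(\mfrak{D})+i=(p^2-2)/(p^2-p)+2/(p^2-p)=p/(p-1)$, and in (2)(ii) $u=1+1=2$.
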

\begin{remark}
These results on the different coincide with those in 
\cite[Th\'eor\`em 2]{Kra99} and 
\cite[Theorem 3]{MoTa03}. 
\end{remark}
\subsubsection{Case of potentially multiplicative reduction}
Let $E$ be an elliptic curve over $\mbb{Q}_p$. 
We write $c_4$, $c_6$ and $\Delta$ for the standard invariants of a 
minimal Weierstrass model of $E/\mbb{Q}_p$. 
Let $j=j(E)$ be the modular invariant of $E$.
In this section, we assume that $E$ satisfies $v_p(j)<0$. 
The field of definition $L:=\mbb{Q}_p(E[p])$ has been studied 
in \cite[Section 19.2]{FrKr20} and \cite[Th\'eor\`em 1]{Kra99}. 
We translate those results into our terminology and determine 
the ramification breaks $u_{L/\mbb{Q}_p}$. 
\begin{theorem}[$p \geq 3$]\label{thm:PotentialMulti}
Suppose $v_p(j)<0$ and $p$ is odd. 
Let $j=p^{v_p(j)} \tilde{j}.$

\begin{itemize}
\item[{\rm (1)}] Suppose one of the following conditions is satisfied:
\begin{itemize}
    \item[\rm (i)] $( \frac{-c_6}{p} )=1$.
    \item[\rm (ii)] $p$ divides $c_6$ and $\frac{c_6}{p}$ is a square in 
    $\mbb{Q}_p$.
\end{itemize}
Then:
\begin{itemize}
\item[{\rm (1.1)}] 
If $v_p(j) \equiv 0 \pmod{p}$ and 
$\tilde{j}^{p-1} \equiv 1 \pmod{p^2}$, then 
$L=\mbb{Q}_p(\mu_p)$ and $u_{L/\mbb{Q}_p}=1$.

\item[{\rm (1.2)}] 
If $v_p(j) \not\equiv 0 \pmod{p}$, then 
$L=K(\mu_p)$ with 
$K \simeq \mbb{Q}_p[X]/(X^p+p(1+ap))$ for some
$a \in \{0,\dots,p-1\}$ 
and $u_{L/\mbb{Q}_p}=\frac{2p-1}{p-1}$.

\item[{\rm (1.3)}] 
If $v_p(j) \equiv 0 \pmod{p}$ and 
$\tilde{j}^{p-1} \not\equiv 1 \pmod{p^2}$, then 
$L=K(\mu_p)$ with $K \simeq \mbb{Q}_p[X]/(X^p+apX+p)$ 
for some $a \in \{1,2,\dots,p-1\}$ and $u_{L/\mbb{Q}_p}=\frac{p}{p-1}$.
\end{itemize}
\end{itemize}
\begin{itemize}
\item[\rm (2)]
Suppose one of the following conditions is satisfied:
\begin{itemize}
    \item[\rm (i)] $( \frac{-c_6}{p} )=-1$.
    \item[\rm (ii)] $p$ divides $c_6$ and $\frac{c_6}{p}$ is not 
    a square in $\mbb{Q}_p$.
\end{itemize}
Then:
\begin{itemize}
\item[\rm (2.1)]
If $v_p(j) \equiv 0 \pmod{p}$ and 
$\tilde{j}^{p-1} \equiv 1 \pmod{p^2}$, then 
$L=\mbb{Q}_{p^2}(\mu_p)$ and $u_{L/\mbb{Q}_p}=1$.
\item[\rm (2.2)]
If $v_p(j) \not\equiv 0 \pmod{p}$, then $L=K(\mu_p)$ 
such that $K \simeq \mbb{Q}_{p^2}[X]/(X^p+p(1+ap))$ for some 
$a \in \mfrak{R}_{\mbb{Q}_{p^2}}$ and $u_{L/\mbb{Q}_p}=\frac{2p-1}{p-1}$.
\item[\rm (2.3)]
If $v_p(j) \equiv 0 \pmod{p}$ and 
$\tilde{j}^{p-1} \not\equiv 1 \pmod{p^2}$, 
then $L=K(\mu_p)$ such that 
$K \simeq \mbb{Q}_{p^2}[X]/(X^p+apX+p)$ for some 
$a \in \mfrak{R}_{\mbb{Q}_{p^2}} \setminus \{0\}$ 
and $u_{L/\mbb{Q}_p}=\frac{p}{p-1}$.
\end{itemize}
\end{itemize}
\end{theorem}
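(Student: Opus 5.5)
The plan is to use the Tate parametrization and reduce everything to a Kummer extension. Since $v_p(j)<0$, $E$ is a quadratic twist of a Tate curve: there is $q\in p\mbb{Z}_p$ with $j=j(q)$ and a character $\chi$ of $G_{\mbb{Q}_p}$ of order $\le 2$ with $E\simeq E_q^{\chi}$. The character $\chi$ cuts out $\mbb{Q}_p(\sqrt{-c_6})$. In case (1), (i) or (ii) makes this extension trivial or ramified of the form $\mbb{Q}_p(\sqrt{p\cdot\square})$; in case (2) it is the unramified quadratic extension or another ramified one.

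For $p$ odd, the ramified quadratic twist does not change the field $\mbb{Q}_p(E[p])$ up to adjoining $\sqrt{\pm p}\in\mbb{Q}_p(\mu_p)$, since $\mbb{Q}_p(\mu_p)$ contains $\sqrt{(-1)^{(p-1)/2}p}$. So I would first show that $L=M(\mu_p,\sqrt[p]{q})$, where $M=\mbb{Q}_p$ in case (1) and $M=\mbb{Q}_{p^2}$ in case (2). This uses $0\to\mu_p\to E_q[p]\to\mbb{Z}/p\to0$, with the extension class given by $q$ in the Kummer group. I would check the identification of the twist character with the Legendre symbol $(\frac{-c_6}{p})$ and with the square class of $c_6/p$ against \cite[Section 19.2]{FrKr20} and \cite[Th\'eor\`em 1]{Kra99}, since the paper says it translates their results.

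Next, write $q=p^{n}u$ with $n=-v_p(j)$ and $u\in\mbb{Z}_p^\times$. Then $j=q^{-1}+744+\cdots$, so $\tilde{j}\equiv u^{-1}\pmod{p^{2}}$ when $n\ge 1$ (indeed modulo $p^{n}$ for the relevant precision, and the correction $744q$ has valuation $\ge n\geq1$). I would refine this to $\tilde j^{p-1}\equiv1 \pmod{p^2}$ iff $u^{p-1}\equiv1\pmod{p^2}$, i.e. iff $u$ is a $p$-th power times a root of unity. The subtle point is when $n=1$, where $744q$ contributes at order $p$ relative to $q^{-1}$. Here one uses $\tilde j = u^{-1}(1+744pu+\dots)$, which gives $\tilde j^{p-1}\equiv u^{1-p}(1-744pu)\pmod{p^2}$. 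This apparently breaks the criterion, but in that case $n\not\equiv 0\pmod p$, so only cases (1.2) and (2.2) arise, and the criterion is not needed there. The three subcases then become:
\begin{itemize}
\item[(a)] If $p\mid n$ and $u\in\mu_{p-1}(\mbb{Z}_p^\times)^p$, then $q$ is a $p$-th power times an element of $\mu_{p-1}$, which is itself a $p$-th power. So $L=M(\mu_p)$, with break $1$.
\item[(b)] If $p\nmid n$, then $\sqrt[p]{q}$ generates the same Kummer extension as $\sqrt[p]{p\,u'}$ for some unit $u'$. Over $\mbb{Q}_p$ one can normalize $u'\equiv 1+ap$ modulo $p$-th powers times $\mu_{p-1}$; the factor $-1$ is absorbed since $p$ is odd. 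Hence $K=M(\sqrt[p]{-p(1+ap)})$, a root of $X^p+p(1+ap)$ after adjusting the sign.
\item[(c)] If $p\mid n$ but $u\notin\mu_{p-1}(\mbb{Z}_p^\times)^p$, then $q$ is equivalent to $1+bp$ with $b\not\equiv0$, and $\sqrt[p]{1+bp}$ generates, after the substitution $X\mapsto 1+\ldots$, a degree-$p$ extension. I would identify this extension with $X^p+apX+p$ using Propositions~\ref{prop:breaklambda} and \ref{prop:break0} from Section~\ref{sect:Ramification}.
\end{itemize}

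The final step computes the upper break of $L=K(\mu_p)$ over $\mbb{Q}_p$. Working over $M(\mu_p)$, the Kummer extension $M(\mu_p,\sqrt[p]{x})$ has lower break (in the normalization of $M(\mu_p)$, of absolute ramification index $p-1$) equal to $p$ if $v(x)$ is prime to $p$. It equals $p-v(x-1)$-type values for principal units, which here gives $p-(p-1)=1$. Converting through Herbrand's function with the tame layer of index $p-1$ gives upper breaks $\frac{p}{p-1}+1=\frac{2p-1}{p-1}$ in case (b) and $\frac{p}{p-1}$ in case (c). Case (a) gives break $1$, from $\mbb{Q}_p(\mu_p)$ alone. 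Equivalently, I would read these values off Propositions~\ref{prop:breaklambda} and \ref{prop:break0} for the degree-$p$ polynomials $X^p+p(1+ap)$ and $X^p+apX+p$, then combine with the tame part.

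The main obstacle is the precise matching of the Kummer class of $q$ with the stated normal forms, in particular the parameter $a$ and the case $a\neq0$ in (c). This requires checking that $1+bp$ with $b\not\equiv 0$ yields the polynomial $X^p+apX+p$ rather than an Eisenstein polynomial of another shape, via the discriminant and break classification of Section~\ref{sect:Ramification}.
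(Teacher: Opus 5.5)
Your argument is correct, but it takes a different route from the paper. The paper also starts from Kraus' description $L=\mbb{Q}_p(\sqrt{-c_6},\mu_p,q^{1/p})$, but then uses only cited invariants of $L$:
\begin{itemize}
\item the degrees from \cite[Proposition 19.3]{FrKr20} and the $p$-th power criterion from \cite[Lemma 19.8]{FrKr20};
\item the different ($v_L(\mfrak{D})=2p^2-2p-1$ or $p^2-2$) from \cite{Kra99};
\item an abstract degree-$p$ subfield $K$ produced by the Schur--Zassenhaus argument of Lemma~\ref{lem:GaloisTheory}.
\end{itemize}
It then reads off the type of $K$ from the different via Propositions~\ref{prop:break0} and \ref{prop:breaklambda}, and only at the end gets the normal form from Amano's classification.

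You instead take $K=M(q^{1/p})$ explicitly. You derive the $p$-th power criterion yourself from the $q$-expansion of $j$; it is only needed when $p\mid n$, and then $n\ge p\ge 3$ gives $\tilde j\equiv u^{-1}\pmod{p^2}$. You normalize the Kummer class of $q$ by hand, using $\mu_{p-1}\subset(\mbb{Z}_p^\times)^p$, $1+p^2\mbb{Z}_p=(1+p\mbb{Z}_p)^p$ and $-1=(-1)^p$. Finally you get the breaks from the standard Kummer break over $\mbb{Q}_p(\mu_p)$ (lower break $t=p$, resp. $t=1$), transported through the tame layer as $u=1+t/(p-1)$.

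What your route buys is self-containedness and a sharper statement. The degrees and the criterion of \cite{FrKr20}, and Kraus' different, all come out as by-products. In (1.2)/(2.2) the parameter $a$ is determined by $q$. In (2.2)/(2.3) it shows that $K$ is the base change of $\mbb{Q}_p(q^{1/p})$, so it already descends to $\mbb{Q}_p$. What the paper's route buys is a uniform template that it reuses verbatim for Theorem~\ref{thm:PotentialGood}, where no Kummer generator is available and only Kraus' different is known.

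Two small corrections.
\begin{itemize}
\item In case (1)(ii) the twisting field is $\mbb{Q}_p(\sqrt{-c_6})=\mbb{Q}_p(\sqrt{-p})$, not $\mbb{Q}_p(\sqrt{p\cdot\square})$. The containment you need is $\sqrt{-p}\in\mbb{Q}_p(\mu_p)=\mbb{Q}_p(\sqrt[p-1]{-p})$; for $p\equiv 3\pmod 4$ one has $\sqrt{p}\notin\mbb{Q}_p(\mu_p)$, so the sign matters.
\item The obstacle you flag in case (c) closes at once. The element $\beta=\sqrt[p]{1+bp}-1$ is a root of the Eisenstein polynomial $(1+X)^p-(1+bp)$. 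All its middle coefficients have valuation $1$, and the first of them is the linear coefficient $p$. Hence $K$ has type $\langle 1,1,\omega\rangle$, and Corollary~\ref{cor:ClassificationLambda} gives $X^p+apX+p$ with $a\neq 0$. Alternatively, the break $\frac{p}{p-1}$ forces $\lambda=1$ by Proposition~\ref{prop:breaklambda}.
\end{itemize}
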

\begin{theorem}[$p=2$]\label{thm:Multiplicative:p=2}
Suppose $v_2(j)<0$. 
Then $L$ is one of the following fields:
\begin{itemize}
\item[\rm (i)]
$L=\mbb{Q}_2$,
\item[\rm (ii)]
$L=\mbb{Q}_4$ and $u_{L/\mbb{Q}_2}=0$,
\item[\rm (iii)]
$L=L_i$ for some $i \in \{1,2,\dots,6\}$ in Table 
\ref{table:degree2} 
and $u_{L/\mbb{Q}_2} \in \{2, 3\}$.
\end{itemize}
\end{theorem}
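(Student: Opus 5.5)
The plan is to use the Tate curve. Since $v_2(j)<0$, there is a quadratic character $\chi$ of $G_{\mbb{Q}_2}$ (trivial in the split multiplicative case) and $q\in 2\mbb{Z}_2$ with $v_2(q)=-v_2(j)$ such that $E$ is the twist by $\chi$ of the Tate curve $E_q$. Then $E[2]$ sits in an exact sequence of $G_{\mbb{Q}_2}$-modules
\[
0\to \mu_2\otimes\chi \to E[2]\to \mbb{Z}/2\mbb{Z}\otimes\chi\to 0 .
\]
Because $\mu_2$ and $\mbb{Z}/2\mbb{Z}$ have trivial action and $\chi$ takes values in $\{\pm1\}\equiv 1$ modulo $2$, the twist by $\chi$ does not change $E[2]$ as an $\mbb{F}_2[G_{\mbb{Q}_2}]$-module. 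Hence $L=\mbb{Q}_2(E_q[2])=\mbb{Q}_2(\sqrt{q})$, since $E_q[2]$ is generated by $-1$ and a square root of $q$ in $\overline{\mbb{Q}}_2^\times/q^{\mbb{Z}}$. In particular $[L:\mbb{Q}_2]\le 2$, and $L$ is either $\mbb{Q}_2$ or one of the seven quadratic extensions of $\mbb{Q}_2$.

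Next I will sort the quadratic extensions $\mbb{Q}_2(\sqrt{d})$, $d\in\{-1,\pm2,\pm5,\pm10\}/(\mbb{Q}_2^\times)^2$ with $d\ne1$. The unramified one is $\mbb{Q}_4=\mbb{Q}_2(\sqrt{5})=\mbb{Q}_2(\sqrt{-3})$, and its upper break is $u=0$ under the convention of Section~\ref{sect:Ramification}. The remaining six are ramified; these should be $L_1,\dots,L_6$ of Table~\ref{table:degree2}, whose Eisenstein polynomials I match up explicitly.

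For the breaks, I use that for a ramified quadratic extension the lower and upper breaks agree, and $v_2(\mfrak{D})=(u+1)/2$ in $v_2$-normalization (discriminant exponent $u+1$). The fields $\mbb{Q}_2(\sqrt{-1})$ and $\mbb{Q}_2(\sqrt{3})$ (the $d\equiv 3 \bmod 4$ classes $-1,3$, equivalently $-1,-5$) have discriminant exponent $2$, so $u=1$ in the lower-numbering convention. The fields $\mbb{Q}_2(\sqrt{\pm2}),\mbb{Q}_2(\sqrt{\pm10})$ have discriminant exponent $3$, so $u=2$. If the paper's convention for $u_{L/K}$ is shifted by one (as the value $u_{L/\mbb{Q}_2}=2$ in Theorem~\ref{cor:defferent}(2)(ii) with $v_p(\mfrak{D})=1$ suggests, since that field is $\mbb{Q}_2(\sqrt{-1})$ type), then these become $2$ and $3$ respectively, giving exactly $u_{L/\mbb{Q}_2}\in\{2,3\}$. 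I will fix the normalization by comparing with Proposition~\ref{prop:breaklambda} or \ref{prop:break0} applied to the Eisenstein polynomials in Table~\ref{table:degree2}.

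The main obstacle is bookkeeping. The key check is that the twist genuinely disappears modulo $2$, including when $\chi$ is ramified. This holds because the Galois action on $E[2]$ only involves $\chi \bmod 2$, which is trivial, so twisting by a quadratic character is invisible on $2$-torsion. A second check is the identification of the table entries with the correct discriminant classes. Neither step requires Kraus' different computations.
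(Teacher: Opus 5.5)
Your proposal is correct, but its key step differs from the paper's. The paper quotes Kraus for $L=\mbb{Q}_2(\sqrt{-c_6},\sqrt{q})$. It then bounds the degree by counting: $[L:\mbb{Q}_2]$ divides $4$ because $L$ is generated by two square roots, and divides $\sharp\mrm{GL}_2(\mbb{F}_2)=6$, so $[L:\mbb{Q}_2]\le 2$. The fields and breaks are then read off from Table~\ref{table:degree2}.

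You instead observe that $[-1]$ acts trivially on $2$-torsion, so the quadratic twist relating $E$ to the Tate curve $E_q$ is invisible on $E[2]$. This gives the sharper identity $L=\mbb{Q}_2(\sqrt{q})$. Your route has three advantages:
\begin{itemize}
\item it is self-contained apart from Tate uniformization;
\item it explains why the twist contributes nothing to $L$ when $p=2$, which the paper sees only indirectly through the order of $\mrm{GL}_2(\mbb{F}_2)$;
\item it tells you which case occurs in terms of the square class of $q$.
\end{itemize}
The paper's route is shorter given the citation. The exact sequence you write down is not actually needed.

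Your identification of the table is right: $L_1=\mbb{Q}_2(\sqrt{-1})$, $L_2=\mbb{Q}_2(\sqrt{-5})$, and $L_3,L_4,L_5,L_6$ are $\mbb{Q}_2(\sqrt{-2})$, $\mbb{Q}_2(\sqrt{-10})$, $\mbb{Q}_2(\sqrt{2})$, $\mbb{Q}_2(\sqrt{10})$. For $L_6$, note that $-6$ and $10$ lie in the same square class. Your discriminant exponents ($2$ for the first two, $3$ for the last four) are also correct.

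You do not need to hedge about the normalization. The paper defines $u_{L/K}$ through $\Gamma(L/K)^{(m)}=\Gamma(L/K)^{m-1}$, so it is Serre's upper break plus one. For a ramified quadratic $L$ there is a single nontrivial embedding, so $i_{L/\mbb{Q}_2}=v_2(\mfrak{D}_{L/\mbb{Q}_2})$. Corollary~\ref{cor:GaloisClosure} then gives directly $u_{L/\mbb{Q}_2}=2v_2(\mfrak{D}_{L/\mbb{Q}_2})$, which equals $2$ for $L_1,L_2$ and $3$ for $L_3,\dots,L_6$.
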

\subsubsection{Case of potentially good reduction}
We use the same notation as in the previous section.  
In this section, we assume that $E$ {\it has additive reduction 
and} $v_p(j) \geq 0$. 
Put $L=\mbb{Q}_p(E[p])$.
Let $F$ be the maximal unramified subextension of
$L/\mbb{Q}_p$. Kraus' results determine the ramified extension
$L/F$, but do not determine the residue degree of
$L/\mbb{Q}_p$; see \cite[Th\'eor\`em 3,4 and 5]{Kra99}. 
For $p\geq 5$, write
\[ \frac{v_p(\Delta)}{12}=\frac{a}{e} \]
with $\gcd(a,e)=1$, choose an element $u\in\overline{\mbb{Q}}_p$
such that $u^e=p^a$, and put $T=\mbb{Q}_p(u)$.
After the change of variables
\[ x=u^2X,\qquad y=u^3Y, \]
let $E_T/T$ denote the resulting elliptic curve, which has good
reduction over $T$, and let $\bar{E}_T$ denote its reduction.
Let $h$ be the height of $\bar{E}_T$. If $h=1$, let
$j_{\mathrm{can}}(\bar{E}_T)$ denote the $j$-invariant of the
canonical lift of $\bar{E}_T$.
As remarked in \cite[Remarque 3]{Kra99}, 
if $v_p(\Delta) \geq 8$, then there exists an elliptic curve $E'$ over $\mbb{Q}_p$ 
such that $c_4(E')=\frac{c_4}{p^2}$, $c_6(E')=\frac{c_6}{p^3}$, $\Delta(E')=\frac{\Delta}{p^6}$ 
and $\mbb{Q}_p^{\mrm{ur}}(E'[p])=\mbb{Q}_p^{\mrm{ur}}(E[p])$. 
Kraus \cite[Th\'eor\`em 3]{Kra99} determined the valuation of 
the different 
$v_L(\mfrak{D}_{\mbb{Q}_p^{\mrm{ur}}(E[p])/\mbb{Q}_p^{\mrm{ur}}})
=v_L(\mfrak{D}_{L/\mbb{Q}_p})$. 
We can compute the structure of $L/F$ and the ramification break $u_{L/\mbb{Q}_p}$ 
from the valuation $v_L(\mfrak{D}_{L/\mbb{Q}_p})$. 
In the following theorem, we follow the division established by Kraus. 
\begin{theorem}[$p \geq 5$]\label{thm:PotentialGood}
Suppose $E$ has additive reduction, $v_p(j) \geq 0$ and 
$p \geq 5$. 
\begin{itemize}
\item[\rm (1)] case $v_p(\Delta)=2$

\begin{itemize}
    \item[\rm (1.1)] 
    If $p \equiv 1 \pmod{3}$ and $v_p(c_4)=1$, then 
    $L=K(\mu_p)$ with $K \simeq F[X]/(X^p+apX^{(2p+1)/3}+p)$ 
    for some $a \in \mfrak{R}_F \setminus \{0\}$ and 
    $u_{L/\mbb{Q}_p}=\frac{5p-2}{3p-3}$.
    \item[\rm (1.2)]
    If $p \equiv 1 \pmod{3}$ and $v_p(c_4)\not=1$, then 
    $L=F(\mu_p)$ and $u_{L/\mbb{Q}_p}=1$.
    \item[\rm (1.3)]
    If $p \equiv 2 \pmod{3}$ and $v_p(c_4)=1$, then 
    $L=K(\mu_p)$ with $K \simeq F[X]/(X^p+apX^{(2p-1)/3}+p)$ 
    for some $a \in \mfrak{R}_F \setminus \{0\}$ 
    and $u_{L/\mbb{Q}_p}=\frac{5p-4}{3p-3}$. 
    \item[\rm (1.4)]
    If $p \equiv 5 \pmod{9}$ and $v_p(c_4)\not=1$, then 
    $L\simeq F[X]/(X^{(p^2-1)/3}+\pi_F)$ for some prime element $\pi_F$ of $F$, and $u_{L/\mbb{Q}_p}=1$.
    \item[\rm (1.5)]
    If $p \equiv 2 \pmod{3}$, $p \not\equiv 5 \pmod{9}$ 
    and $v_p(c_4) \neq 1$, then 
    $L\simeq F[X]/(X^{p^2-1}+\pi_F)$ for some prime element $\pi_F$ of $F$, and $u_{L/\mbb{Q}_p}=1$.
\end{itemize}

\item[\rm (2)] case $v_p(\Delta)=3$

\begin{itemize}
    \item[\rm (2.1)]
    If $p \equiv 1 \pmod{4}$ and $v_p(c_6)=2$, then 
    $L = K(\mu_p)$ such that $K \simeq F[X]/(X^p+apX^{(p+1)/2}+p)$ 
    for some $a \in \mfrak{R}_F \setminus \{0\}$ 
    and $u_{L/\mbb{Q}_p}=\frac{3p-1}{2p-2}$.
    \item[\rm (2.2)]
    If $p \equiv 1 \pmod{4}$ and $v_p(c_6)\not=2$, then 
    $L=F(\mu_p)$ and $u_{L/\mbb{Q}_p}=1$.
    \item[\rm (2.3)]
    If $p \equiv 3 \pmod{4}$ and $v_p(c_6)=2$, then 
    $L = K(\mu_p)$ such that $K \simeq F[X]/(X^p+apX^{(p-1)/2}+p)$ 
    for some $a \in \mfrak{R}_F \setminus \{0\}$ 
    and $u_{L/\mbb{Q}_p}=\frac{3}{2}$.
    \item[\rm (2.4)]
    If $p \equiv 3 \pmod{4}$ and $v_p(c_6)\not=2$, then 
    $L\simeq F[X]/(X^{p^2-1}+\pi_F)$ for some prime element $\pi_F$ of $F$, and $u_{L/\mbb{Q}_p}=1$.
\end{itemize}

\item[\rm (3)] case $v_p(\Delta)=4$

\begin{itemize}
    \item[\rm (3.1)] 
    If $p \equiv 1 \pmod{3}$ and $v_p(c_4)=2$, then 
    $L=K(\mu_p)$ with $K \simeq F[X]/(X^p+apX^{(p+2)/3}+p)$ 
    for some $a \in \mfrak{R}_F \setminus \{0\}$ and 
    $u_{L/\mbb{Q}_p}=\frac{4p-1}{3p-3}$.
    \item[\rm (3.2)]
    If $p \equiv 1 \pmod{3}$ and $v_p(c_4)\not=2$, then 
    $L=F(\mu_p)$ and $u_{L/\mbb{Q}_p}=1$.
    \item[\rm (3.3)]
    If $p \equiv 2 \pmod{3}$ and $v_p(c_4)=2$, then 
    $L=K(\mu_p)$ with $K \simeq F[X]/(X^p+apX^{(p-2)/3}+p)$ 
    for some $a \in \mfrak{R}_F \setminus \{0\}$ and 
    $u_{L/\mbb{Q}_p}=\frac{4p-5}{3p-3}$.
    \item[\rm (3.4)]
    If $p \equiv 2 \pmod{9}$ and $v_p(c_4)\not=2$, then 
    $L\simeq F[X]/(X^{(p^2-1)/3}+\pi_F)$ for some prime element $\pi_F$ of $F$, and $u_{L/\mbb{Q}_p}=1$.
    \item[\rm (3.5)]
    If $p \equiv 2 \pmod{3}$, $p \not\equiv 2 \pmod{9}$ 
    and $v_p(c_4) \neq 2$, then 
    $L\simeq F[X]/(X^{p^2-1}+\pi_F)$ for some prime element $\pi_F$ of $F$, and $u_{L/\mbb{Q}_p}=1$.
\end{itemize}

\item[\rm (4)] case $v_p(\Delta)=6$

\begin{itemize}
    \item[\rm (4.1)] 
    Suppose one of the following conditions is satisfied: 
    \begin{itemize}
        \item[\rm (a)]
        $p \equiv 1 \pmod{3}$ and $v_p(c_4) \geq 4$,
        \item[\rm (b)]
        $p \equiv 1 \pmod{4}$ and $v_p(c_6) \geq 5$,
        \item[\rm (c)]
        $v_p(c_4)=2$, $v_p(c_6)=3$, $h=1$ and $v_p(j-j_{\mathrm{can}}(\bar{E}_T))\not=1$.
    \end{itemize}
    Then $L=F(\mu_p)$ and $u_{L/\mbb{Q}_p}=1$. 
    \item[\rm (4.2)]
    Suppose one of the following conditions is satisfied: 
    \begin{itemize}
        \item[\rm (a)]
        $p \equiv 1 \pmod{3}$ and $v_p(c_4)=3$,
        \item[\rm (b)]
        $p \equiv 1 \pmod{4}$ and $v_p(c_6)=4$,
        \item[\rm (c)]
        $v_p(c_4)=2$, $v_p(c_6)=3$, $h=1$ and $v_p(j-j_{\mathrm{can}}(\bar{E}_T))=1$.
    \end{itemize}
    Then $L=K(\mu_p)$ with 
    $K \simeq F[X]/(X^p+apX+p)$ for some 
    $a \in \mfrak{R}_F \setminus \{0\}$ and $u_{L/\mbb{Q}_p}=\frac{p}{p-1}$.
    \item[\rm (4.3)]
    If $h=2$, then $L\simeq F[X]/(X^{p^2-1}+\pi_F)$ for some prime element $\pi_F$ of $F$, and $u_{L/\mbb{Q}_p}=1$.
\end{itemize}

\item[\rm (5)] case $v_p(\Delta)=8$

\begin{itemize}
    \item[\rm (5.1)] 
    If $p \equiv 1 \pmod{3}$ and $v_p(c_4)=3$, then 
    $L=K(\mu_p)$ with $K \simeq F[X]/(X^p+apX^{(2p+1)/3}+p)$ 
    for some $a \in \mfrak{R}_F \setminus \{0\}$ and 
    $u_{L/\mbb{Q}_p}=\frac{5p-2}{3p-3}$.
    \item[\rm (5.2)]
    If $p \equiv 1 \pmod{3}$ and $v_p(c_4)\not=3$, then 
    $L=F(\mu_p)$ and $u_{L/\mbb{Q}_p}=1$.
    \item[\rm (5.3)]
    If $p \equiv 2 \pmod{3}$ and $v_p(c_4)=3$, then 
    $L=K(\mu_p)$ with $K \simeq F[X]/(X^p+apX^{(2p-1)/3}+p)$ 
    for $a \in \mfrak{R}_F \setminus \{0\}$ 
    and $u_{L/\mbb{Q}_p}=\frac{5p-4}{3p-3}$. 
    \item[\rm (5.4)]
    If $p \equiv 5 \pmod{9}$ and $v_p(c_4)\not=3$, then 
    $L\simeq F[X]/(X^{(p^2-1)/3}+\pi_F)$ for some prime element $\pi_F$ of $F$, and $u_{L/\mbb{Q}_p}=1$.
    \item[\rm (5.5)]
    If $p \equiv 2 \pmod{3}$, $p \not\equiv 5 \pmod{9}$ 
    and $v_p(c_4) \neq 3$, then 
    $L\simeq F[X]/(X^{p^2-1}+\pi_F)$ for some prime element $\pi_F$ of $F$, and $u_{L/\mbb{Q}_p}=1$.
\end{itemize}

\item[\rm (6)] case $v_p(\Delta)=9$

\begin{itemize}
    \item[\rm (6.1)]
    If $p \equiv 1 \pmod{4}$ and $v_p(c_6)=5$, then 
    $L = K(\mu_p)$ such that $K \simeq F[X]/(X^p+apX^{(p+1)/2}+p)$ 
    for some $a \in \mfrak{R}_F \setminus \{0\}$ 
    and $u_{L/\mbb{Q}_p}=\frac{3p-1}{2p-2}$.
    \item[\rm (6.2)]
    If $p \equiv 1 \pmod{4}$ and $v_p(c_6)\not=5$, then 
    $L=F(\mu_p)$ and $u_{L/\mbb{Q}_p}=1$.
    \item[\rm (6.3)]
    If $p \equiv 3 \pmod{4}$ and $v_p(c_6)=5$, then 
    $L = K(\mu_p)$ such that $K \simeq F[X]/(X^p+apX^{(p-1)/2}+p)$ 
    for some $a \in \mfrak{R}_F \setminus \{0\}$ 
    and $u_{L/\mbb{Q}_p}=\frac{3}{2}$.
    \item[\rm (6.4)]
    If $p \equiv 3 \pmod{4}$ and $v_p(c_6)\not=5$, then 
    $L\simeq F[X]/(X^{p^2-1}+\pi_F)$ for some prime element $\pi_F$ of $F$, and $u_{L/\mbb{Q}_p}=1$.
\end{itemize}

\item[\rm (7)] case $v_p(\Delta)=10$

\begin{itemize}
    \item[\rm (7.1)] 
    If $p \equiv 1 \pmod{3}$ and $v_p(c_4)=4$, then 
    $L=K(\mu_p)$ with $K \simeq F[X]/(X^p+apX^{(p+2)/3}+p)$ 
    for some $a \in \mfrak{R}_F \setminus \{0\}$ and 
    $u_{L/\mbb{Q}_p}=\frac{4p-1}{3p-3}$.
    \item[\rm (7.2)]
    If $p \equiv 1 \pmod{3}$ and $v_p(c_4)\not=4$, then 
    $L=F(\mu_p)$ and $u_{L/\mbb{Q}_p}=1$.
    \item[\rm (7.3)]
    If $p \equiv 2 \pmod{3}$ and $v_p(c_4)=4$, then 
    $L=K(\mu_p)$ with $K \simeq F[X]/(X^p+apX^{(p-2)/3}+p)$ 
    for some $a \in \mfrak{R}_F \setminus \{0\}$ and 
    $u_{L/\mbb{Q}_p}=\frac{4p-5}{3p-3}$.
    \item[\rm (7.4)]
    If $p \equiv 2 \pmod{9}$ and $v_p(c_4)\not=4$, then 
    $L\simeq F[X]/(X^{(p^2-1)/3}+\pi_F)$ for some prime element $\pi_F$ of $F$, and $u_{L/\mbb{Q}_p}=1$.
    \item[\rm (7.5)]
    If $p \equiv 2 \pmod{3}$, $p \not\equiv 2 \pmod{9}$ 
    and $v_p(c_4) \neq 4$, then 
    $L\simeq F[X]/(X^{p^2-1}+\pi_F)$ for some prime element $\pi_F$ of $F$, and $u_{L/\mbb{Q}_p}=1$.
\end{itemize}

\end{itemize}
\end{theorem}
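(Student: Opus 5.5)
The proof is case-by-case in Kraus' division, with one common mechanism. For each case, \cite[Th\'eor\`em 3]{Kra99} gives $v_L(\mfrak{D}_{L/\mbb{Q}_p})$ and the inertia group $I\subset GL_2(\mbb{F}_p)$, hence the ramification index $e(L/\mbb{Q}_p)$. From these two numbers we read off the structure of $L/F$, and then $u_{L/\mbb{Q}_p}$. Since $F/\mbb{Q}_p$ is unramified, we may replace $\mbb{Q}_p$ by $F$ throughout: different, upper breaks and ramification filtration are unchanged. So we work with the totally ramified extension $L/F$.

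\textbf{Tame cases.} Here $p\nmid e$, namely (1.2), (1.4), (1.5), (2.2), (2.4), (3.2), (3.4), (3.5), (4.1), (4.3) and their analogues in (5)--(7). The extension $L/F$ is totally tamely ramified of degree $e$, with $e\in\{p-1,(p^2-1)/3,p^2-1\}$. By tame ramification theory,
\[
L\simeq F[X]/(X^e+\pi_F)
\]
for a suitable uniformizer $\pi_F$, and $u_{L/\mbb{Q}_p}=1$ (the break is $0$ in the normalization where tame extensions have break $0$; in the paper's normalization, compatible with Theorem \ref{cor:defferent}, it is $1$). When $e=p-1$, the determinant of $E[p]$ is the cyclotomic character, so $\mu_p\subset L$. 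Hence $L=F(\mu_p)$ by degree comparison. The value of $e$ in each case comes from Serre's description of the image of tame inertia, via the fundamental characters of level $1$ or $2$ raised to the power $v_p(\Delta)/12\cdot(p-1)$ or $(p^2-1)$. The congruence conditions modulo $3$, $4$ and $9$ are exactly what decide whether this character has order $p-1$, $(p^2-1)/3$ or $p^2-1$.

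\textbf{Wild cases.} Here $e=p(p-1)$. The image of $I$ is a Borel-type group, $F(\mu_p)\subset L$, and $L/F(\mu_p)$ is cyclic of degree $p$. Take $K$ to be the fixed field of a complement of order $p-1$ of the $p$-Sylow subgroup of $I$, as in the proof of Theorem \ref{ord:wild}. Then $K/F$ is totally ramified of degree $p$ and $L=K(\mu_p)$. Transitivity of the different gives
\[
v_p(\mfrak{D}_{L/F}) = v_p(\mfrak{D}_{K/F}) + v_p(\mfrak{D}_{L/K}) = v_p(\mfrak{D}_{K/F}) + \tfrac{p-2}{p(p-1)},
\]
so Kraus' value determines $v_K(\mfrak{D}_{K/F})=p-1+s$ for an explicit integer $s$. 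Propositions \ref{prop:breaklambda} and \ref{prop:break0} (the classification of degree $p$ extensions by their break) then give $K\simeq F[X]/(X^p+apX^{s}+p)$ with $a\neq0$ and $p\nmid s$. The exponents $(2p\pm1)/3$, $(p\pm1)/2$, $(p\pm2)/3$ and $1$ arise as follows: Kraus' valuations in the cases $v_p(\Delta)=2,3,4,6$ (respectively $8,9,10$) yield $s\equiv -p\,v_p(\Delta)/12$-type residues. Finally, $u_{L/\mbb{Q}_p}$ is obtained by passing the break of $K/F$ through the Herbrand function of the tame extension $L/K$ of degree $p-1$: the lower break $s'$ of $L/K(\mu_p)$ is computed, and we take $u=\varphi(s')$. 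This produces values such as $\frac{5p-2}{3p-3}$, $\frac{3p-1}{2p-2}$ and $\frac{p}{p-1}$.

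\textbf{Reductions.} The cases $v_p(\Delta)\ge8$ reduce to $v_p(\Delta)-6$ via \cite[Remarque 3]{Kra99}, since $E$ and $E'$ have the same field $\mbb{Q}_p^{\mrm{ur}}(E[p])$. For this reason (5)--(7) mirror (1)--(3), up to relabelling the $c_4,c_6$ conditions. Case (4), with $h=1$ and condition (c), follows from the good ordinary theory over $T$ and is parallel to Theorem \ref{ord:tame} versus Theorem \ref{ord:wild}.

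\textbf{Main obstacle.} The hardest step is the bookkeeping in the wild cases: translating Kraus' different (normalized by $v_L$) into the exponent $s$ of the Eisenstein polynomial, and checking that the resulting $s$ lies in the range where Propositions \ref{prop:breaklambda} and \ref{prop:break0} apply. Note that these propositions do not pin down $a$, which is why the statement only asserts the existence of some $a\in\mfrak{R}_F\setminus\{0\}$. I would first do case (1.1) in detail and then verify the remaining cases by the same formula.
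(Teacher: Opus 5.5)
Your outline is correct and is essentially the paper's argument. Both proceed case by case from Kraus' different and inertia data. Both reduce (5)--(7) to (1)--(3) via \cite[Remarque 3]{Kra99} and read the tame cases off from $e(L/F)$. In the wild cases both take a Schur--Zassenhaus complement $K$ of the order-$p$ wild inertia and recover its Amano type from the different.

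The one real variation is how you extract $\lambda$. You use transitivity of the different in $F\subset K\subset L$, i.e.\ $v_L(\mfrak{D}_{L/F})=(p-1)(p+\lambda-1)+(p-2)=p^2+(\lambda-1)p-\lambda-1$. This only needs $L/K$ to be tame and totally ramified of degree $p-1$. The paper instead applies Proposition~\ref{prop:breaklambda}, which assumes that $L$ is the Galois closure of $K/F$. To verify that, the paper must extract faithfulness of the conjugation action from Kraus and invoke Lemma~\ref{lem:GaloisTheory}. Your route is slightly more economical. Likewise, in the tame cases, the structure theorem for tamely ramified extensions is an equivalent substitute for the paper's use of Proposition~\ref{prop:criteria}.

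A few slips to fix:
\begin{itemize}
\item The normal form $X^p+apX^{\lambda}+p$ comes from Corollary~\ref{cor:ClassificationLambda} (Amano's classification with $\mfrak{M}(\lambda,1)=\{1\}$, applicable since $\lambda\le p-2$ in every row). Propositions~\ref{prop:breaklambda} and \ref{prop:break0} only compute invariants.
\item The break is most simply $u_{L/F}=\max\{u_{K/F},u_{F(\mu_p)/F}\}=1+\frac{\lambda}{p-1}$, by Propositions~\ref{prop:composite} and \ref{prop:BreakDegreeP}. Your ``lower break of $L/K(\mu_p)$'' is vacuous since $L=K(\mu_p)$; you presumably mean $L/F(\mu_p)$.
\item Your residue heuristic for $s$ is at best a heuristic and is not needed. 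The values of $\lambda$ are simply solved from Kraus' differences, as in the paper's table.
\item Case (4) needs no separate good-ordinary argument ``over $T$''; Theorems~\ref{ord:tame} and \ref{ord:wild} are statements over $\mbb{Q}_p$. The paper passes to the quadratic twist $E^{\dagger}/\mbb{Q}_p$. Kraus' values then feed directly into your uniform tame/wild mechanism: $p-2$ in (4.1), $p^2-2$ with $e=p(p-1)$ in (4.2), and $p^2-2$ with $e=p^2-1$ in (4.3).
\end{itemize}
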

\begin{theorem}[$p=3$]\label{thm:Additive:p=3}
Suppose that $E$ has additive reduction, $v_3(j)\geq 0$, and $p=3$.
Let $\Delta=3^{v_3(\Delta)}\widetilde{\Delta}$.
\begin{itemize}
    \item[{\rm (1)}]
    Suppose that $v_3(\Delta)\equiv 0\pmod{3}$.
    \begin{itemize}
        \item[{\rm (1.1)}]
        If $2v_3(c_6)\leq 4+v_3(\Delta)$ and
        $\widetilde{\Delta}^{\,2}\equiv 1\pmod{9}$, then
        $L=F(\mu_3)$ and $u_{L/\mbb{Q}_3}=1$.

        \item[{\rm (1.2)}]
        If $2v_3(c_6)\leq 4+v_3(\Delta)$ and
        $\widetilde{\Delta}^{\,2}\not\equiv 1\pmod{9}$, then
        $L=FK(\mu_3)$ with
        $K\simeq \mbb{Q}_3[X]/(X^3+3X+3)$, and
        $u_{L/\mbb{Q}_3}=\frac{3}{2}$.

        \item[{\rm (1.3)}]
        If $2v_3(c_6)>4+v_3(\Delta)$, then $L\simeq F[X]/(X^8+\pi_F)$ for some prime element $\pi_F$ of $F$, and
        $u_{L/\mbb{Q}_3}=1$.
    \end{itemize}

    \item[{\rm (2)}]
    Suppose that $v_3(\Delta)\not\equiv 0\pmod{3}$.
    \begin{itemize}
        \item[{\rm (2.1)}]
        Then $L=FK(\mu_3)$ and $u_{L/\mbb{Q}_3}=\frac{5}{2}$, where $K$ is
        isomorphic to one of the three extensions\footnote{
        For comparison with the LMFDB, note that
        $\mbb{Q}_3[X]/(X^3+12)\simeq
        \mbb{Q}_3[X]/(X^3+9X+3)$ and
        $\mbb{Q}_3[X]/(X^3+21)\simeq
        \mbb{Q}_3[X]/(X^3+18X+3)$.
        }
        $\mbb{Q}_3[X]/(X^3+3)$,
        $\mbb{Q}_3[X]/(X^3+12)$, and
        $\mbb{Q}_3[X]/(X^3+21)$.
    \end{itemize}
\end{itemize}
\end{theorem}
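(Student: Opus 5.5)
The proof follows the pattern used for Theorem~\ref{thm:PotentialGood}. We take as input Kraus' computation of the different $v_L(\mfrak{D}_{L/\mbb{Q}_3})$ for $p=3$ \cite[Th\'eor\`em 4]{Kra99}, together with the structure of the Galois image, and convert it into the structure of $L/F$ using Propositions~\ref{prop:breaklambda} and \ref{prop:break0}. Here $F$ is the maximal unramified subextension of $L/\mbb{Q}_3$. Since $\mu_3\subset L$ by the Weil pairing, $L$ contains $F(\mu_3)$, and the inertia group of $L/\mbb{Q}_3$ sits inside $GL_2(\mbb{F}_3)$. Its order therefore divides $48$, and its wild part is a $3$-group of order at most $3$. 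So $L/F$ is either tame or has wild inertia of order exactly $3$.

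\textbf{Tame cases.} First we identify when $L/F$ is tame, i.e.\ when $v_L(\mfrak{D})=e_L-1$.
\begin{itemize}
\item If $e_L=2$, then $L=F(\mu_3)$; this is case (1.1).
\item In case (1.3), Kraus' condition $2v_3(c_6)>4+v_3(\Delta)$ corresponds to the potentially supersingular situation. There the tame inertia is the non-split Cartan $\mbb{F}_9^\times$ of order $8$, so $L/F$ is totally tamely ramified of degree $8$. Any such extension of $F$ has the form $F(\sqrt[8]{-\pi_F})$ for a suitable prime element $\pi_F$, which gives $L\simeq F[X]/(X^8+\pi_F)$. Its upper break is $0$ over $F$, so the maximal break over $\mbb{Q}_3$ in the normalization of the paper is $u_{L/\mbb{Q}_3}=1$.
\end{itemize}

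\textbf{Wild cases (1.2) and (2.1).} Here the Galois image contains a subgroup of order $3$. We let $K$ be the fixed field of a $2$-Sylow of inertia, giving a degree-$3$ subextension of $L/F$ with $L=FK(\mu_3)$. From Kraus' value of $v_L(\mfrak{D})$ and the tower formula we get the different of $K/F$, hence the lower break of $K/F$ and the upper break $u_{L/\mbb{Q}_3}$ via Herbrand's function.
\begin{itemize}
\item In (1.2) the break is the smallest possible wild one, with $u=\tfrac32$. Proposition~\ref{prop:breaklambda} then gives a defining polynomial $X^3+a3X+3$ with $a\neq0$.
\item In (2.1) we have $v_3(\Delta)\not\equiv0\pmod 3$, so the break is maximal, with $u=\tfrac52$. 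Proposition~\ref{prop:break0} gives polynomials of the shape $X^3+3(1+3b)$, and these account for the three fields $\mbb{Q}_3[X]/(X^3+3)$, $\mbb{Q}_3[X]/(X^3+12)$, $\mbb{Q}_3[X]/(X^3+21)$.
\end{itemize}

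\textbf{Main obstacle.} The hardest step is to show in (1.2) and (2.1) that $K$ can be taken over $\mbb{Q}_3$ itself, rather than merely over $F$ as in Theorem~\ref{thm:PotentialGood}. We also need to pin it down exactly: $X^3+3X+3$ in (1.2), the three listed fields in (2.1). Here we will use that the polynomial coefficients are constrained by the decomposition group and not only by inertia. The $3$-part of the image is normalized by Frobenius and by the tame character, which is the cyclotomic character on the relevant quotient. This should force the degree-$3$ subfield to descend to $\mbb{Q}_3$; equivalently, its discriminant class must match that of the $S_3$-closure containing $\mu_3$.

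The precise choice among the candidate cubics, namely the sign of $a$ in (1.2) and the residue of $1+3b$ in (2.1), would be fixed by the congruence $\widetilde{\Delta}^2\not\equiv 1\pmod 9$. To do this, we relate $\mbb{Q}_3(\sqrt[3]{\Delta})$-type Kummer data to the $3$-torsion field through the $3$-division polynomial $3x^4+b_2x^3+\cdots$. This last identification is where explicit computation with the division polynomial will be needed.
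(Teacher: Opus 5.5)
You handle the ramification breaks and cases (1.1) and (1.3) essentially as the paper does. The step you call the main obstacle, however, is a genuine gap, and your plan for it aims at the wrong target. In (1.2) and (2.1) the theorem asserts $L=FK(\mu_3)$ for a cubic extension $K$ of $\mathbb{Q}_3$. Your $K$, the fixed field of a $2$-Sylow of inertia, is only a cubic extension of $F$. Over $F$, Corollaries~\ref{cor:ClassificationLambda} and \ref{cor:ClassificationType0} give only $X^3+3aX+3$ with $a\in\mathfrak{R}_F\setminus\{0\}$, resp.\ $X^3+3(1+3b)$ with $b\in\mathfrak{R}_F$. So neither the single field in (1.2) nor the list of three fields in (2.1) follows, and ``this should force the degree-$3$ subfield to descend'' is not an argument. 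Moreover, the congruence $\widetilde{\Delta}^{2}\not\equiv 1\pmod 9$ does not control the choice of cubic in (1.2); it only separates (1.1) from (1.2). A division-polynomial computation extracting that choice from $\widetilde{\Delta}$ is therefore the wrong tool. In (2.1) nothing needs to be pinned down, since the theorem only claims membership in the list of three.

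The paper fills this gap with Kraus' explicit description $L=F(\mu_3,\Delta^{1/3})$ in both (1.2) and (2.1), with $e(L/F)=6$ and $v_L(\mathfrak{D}_{L/F})=7$, resp.\ $11$.
\begin{itemize}
\item It takes $K=\mathbb{Q}_3(\Delta^{1/3})$. Then $M=K(\mu_3)$ is the splitting field of $X^3-\Delta$ and is totally ramified of degree $6$ over $\mathbb{Q}_3$.
\item Unramified base change gives $v_3(\mathfrak{D}_{M/\mathbb{Q}_3})=7/6$, resp.\ $11/6$. Propositions~\ref{prop:breaklambda} and \ref{prop:break0} then fix the type of $K$ over $\mathbb{Q}_3$ itself.
\item In (1.2), with $\pi=-3$, this leaves $X^3+3\omega X+3$ for $\omega\in\{1,2\}$. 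Proposition~\ref{prop:GaloisCriteria} says the Galois closure is $K_\omega(\sqrt{-3\omega})$. Since the closure $K(\mu_3)$ contains $\sqrt{-3}$ and $2$ is not a square in $\mathbb{Q}_3$, we get $\omega=1$.
\end{itemize}
Your remark about matching the $S_3$-closure with $\mu_3$ is the right idea, but it selects $\omega$; it does not give the descent.

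If you prefer to avoid Kraus' explicit generator, you can close the gap group-theoretically.
\begin{itemize}
\item The wild inertia $N$ is a normal Sylow $3$-subgroup of $G=\mathrm{Gal}(L/\mathbb{Q}_3)$. Apply Schur--Zassenhaus in $G$, not just in inertia, to get a complement $H$. Then $K=L^H$ is a totally ramified cubic extension of $\mathbb{Q}_3$ with $L=FK(\mu_3)$.
\item When $K$ is not Galois, the quadratic subfield of its $S_3$-closure is the fixed field of the kernel of the conjugation character of $G$ on $N$.
\item Inside the Borel subgroup of $\mathrm{GL}_2(\mathbb{F}_3)$ this character is $a/d=ad=\det$, the mod $3$ cyclotomic character. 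Hence again $\omega=1$.
\end{itemize}
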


\begin{theorem}[$p=2$]\label{thm:Additive:p=2}
Suppose that $E$ has additive reduction, $v_2(j)\geq 0$, and $p=2$.
Let $\Delta=2^{v_2(\Delta)}\widetilde{\Delta}$.
\begin{itemize}
    \item[{\rm (1)}]
    Suppose that $v_2(\Delta)$ is odd.
    \begin{itemize}
        \item[{\rm (1.1)}]
        Then $L=FL_i$ for some $i\in\{3,4,5,6\}$ in
        Table~\ref{table:degree2}, and $u_{L/\mbb{Q}_2}=3$.
    \end{itemize}

    \item[{\rm (2)}]
    Suppose that $v_2(\Delta)$ is even.
    \begin{itemize}
        \item[{\rm (2.1)}]
        If $\widetilde{\Delta}\equiv-1\pmod{4}$, then
        $L=FL_i$ for some $i\in\{1,2\}$ in
        Table~\ref{table:degree2}, and $u_{L/\mbb{Q}_2}=2$.

        \item[{\rm (2.2)}]
        If $v_2(\Delta)\in\{6,8,12,14\}$ and
        $3v_2(c_4)\geq 8+v_2(\Delta)$, then
        $L\simeq F[X]/(X^3+2)$ and $u_{L/\mbb{Q}_2}=1$.

        \item[{\rm (2.3)}]
        Otherwise, $L=F$ and $u_{L/\mbb{Q}_2}=0$.
    \end{itemize}
\end{itemize}
\end{theorem}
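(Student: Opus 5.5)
We follow the same strategy the paper announces for bad reduction. First reduce to the inertia-theoretic data, then read off the ramified part $L/F$ from Kraus' different computations in the case $p=2$ and $v_2(j)\ge 0$ \cite{Kra99}.

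Since $p=2$, the group $\mrm{Gal}(L/\mbb{Q}_2)$ embeds in $GL_2(\mbb{F}_2)\simeq S_3$. Hence the inertia subgroup $I$ of $\mrm{Gal}(L/\mbb{Q}_2)$ is one of $1$, $\mbb{Z}/2$, $\mbb{Z}/3$ or $S_3$. The possibility $I=S_3$ is excluded because its tame quotient would have to be cyclic of order $2$, which contradicts the fact that $S_3$ has no normal subgroup of order $3$ with the correct quotient structure ($I$ must be $P\rtimes C$ with $P$ a normal $2$-group and $C$ cyclic of odd order). So $L/F$ is trivial, tame cubic (totally ramified, hence $F(\sqrt[3]{\pi_F})$ with $\pi_F$ a uniformizer, and after absorbing unit cube classes into $F$ we may take $\pi_F=2$), or wild quadratic.

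In the wild quadratic case, $L=FL'$ with $L'/\mbb{Q}_2$ a ramified quadratic extension. This $L'$ can be chosen to be one of the six fields $L_1,\dots,L_6$ of Table~\ref{table:degree2}: the ramified part descends because $F/\mbb{Q}_2$ is unramified and a quadratic extension of $F$ equals $F(\sqrt{d})$ with $d$ represented modulo squares by an element of $\mbb{Q}_2$, after adjusting by units of $F$ that become squares. By Section~\ref{sect:Ramification}, these fields have discriminant exponent $2$ (for $L_1,L_2$, i.e. $\mbb{Q}_2(\sqrt{-1}),\mbb{Q}_2(\sqrt{3})$ up to the table's normalization) or $3$ (for $L_3,\dots,L_6$, i.e. $\sqrt{\pm2},\sqrt{\pm 6}$). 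The corresponding upper breaks are $u=1$ and $u=2$ in Serre's lower numbering, which become $2$ and $3$ under the paper's normalization (matching Theorem~\ref{cor:defferent}(2)(ii), where $v_2(\mfrak{D})=1$ gives $u=2$).

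Next we match Kraus' list. By \cite[Th\'eor\`eme 5]{Kra99}, $v_L(\mfrak{D}_{L/\mbb{Q}_2})$ is given in terms of $v_2(\Delta)$, $\widetilde{\Delta}$ and $v_2(c_4)$. The three shapes of $L/F$ are distinguished as follows.

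\emph{Wild quadratic case.} Here $L=F(\sqrt{\Delta})$ should hold, since $\mbb{Q}_2(E[2])$ contains $\mbb{Q}_2(\sqrt{\Delta})$ as the fixed field of $A_3$ and the $3$-part is tame. If $v_2(\Delta)$ is odd, then $\sqrt{\Delta}$ generates $\mbb{Q}_2(\sqrt{\pm2})$ or $\mbb{Q}_2(\sqrt{\pm6})$, giving case (1.1) with $u=3$. If $v_2(\Delta)$ is even, then $\mbb{Q}_2(\sqrt{\Delta})=\mbb{Q}_2(\sqrt{\widetilde{\Delta}})$. This is ramified exactly when $\widetilde{\Delta}\equiv -1\pmod 4$, and in that case it equals $\mbb{Q}_2(\sqrt{-1})$ or $\mbb{Q}_2(\sqrt{3})$, giving $L_1$ or $L_2$ and $u=2$. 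Otherwise $\sqrt{\Delta}$ generates an unramified extension.

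\emph{Cubic case.} When the quadratic part is unramified, we decide between $I=\mbb{Z}/3$ and $I=1$ by reading Kraus' different: the value $v_2(\mfrak{D})=2/3$ corresponds to tame cubic ramification. Kraus' criterion for this should translate into $v_2(\Delta)\in\{6,8,12,14\}$ together with $3v_2(c_4)\ge 8+v_2(\Delta)$. The break is then $u=1$, as for any tame totally ramified extension in the paper's normalization (compare Theorem~\ref{cor:defferent}(3)).

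\emph{Unramified case.} In all remaining cases $L=F$ and $u=0$.

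The main obstacle is the faithful translation of Kraus' case list for $p=2$ (his conditions on $v_2(c_4)$, $v_2(c_6)$ and $v_2(\Delta)$) into exactly the dichotomy of (2.2) versus (2.3), including the sharp inequality $3v_2(c_4)\ge 8+v_2(\Delta)$. For this we would check each of Kraus' subcases against the alternatives $v_L(\mfrak{D})\in\{0,2\}$ in the case $e=3$.

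A secondary point is proving that the cubic generator can be normalized to $X^3+2$. We would handle this by noting that $F^\times/F^{\times3}$ restricted to uniformizers is represented by $2\zeta$ with $\zeta\in\mu_{q-1}(F)$, and that $F(\sqrt[3]{2\zeta})$ is isomorphic to $F(\sqrt[3]{2})$ over $\mbb{Q}_2$ after adjoining roots of unity already in $F$. This may require enlarging $F$ appropriately, which is consistent with $F$ being the maximal unramified subextension.
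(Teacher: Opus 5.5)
Your outline follows the paper's proof: you exclude $\sharp I=6$ because $S_3$ has no normal Sylow $2$-subgroup, you identify the wild quadratic case with $L=F(\sqrt{\Delta})$, and you take the split between (2.2) and (2.3) from Kraus' different computation. The gap is the normalization $L\simeq F[X]/(X^3+2)$ in (2.2), which you argue incorrectly.

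The unit cube classes of $F$ cannot be absorbed. Since $L/F$ is Galois, totally and tamely ramified of degree $3$, we must have $\mu_3\subset F$ (indeed $F=\mbb{Q}_4$). Then $\mcal{O}_F^{\times}/\mcal{O}_F^{\times 3}\simeq\mu_3$ is non-trivial. For a primitive cube root of unity $\zeta$, the field $F(\sqrt[3]{2\zeta})$ is a Kummer extension of $F$ different from $F(\sqrt[3]{2})$. It is not even Galois over $\mbb{Q}_2$: Frobenius carries the Kummer group generated by $2\zeta$ to the one generated by $2\zeta^2$, and these differ because $\zeta$ is not a cube in $\mbb{Q}_4$. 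So your claim that it is $\mbb{Q}_2$-isomorphic to $F(\sqrt[3]{2})$ is false. ``Enlarging $F$'' is also not available, since $F$ is determined by $L$.

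The missing ingredient is that $L/\mbb{Q}_2$ is Galois. The paper uses it as follows:
\begin{itemize}
\item $G$ cannot be cyclic of order $3$, because $\mu_3\not\subset\mbb{Q}_2$ rules out a tamely, totally ramified cyclic cubic extension of $\mbb{Q}_2$.
\item Hence $G\simeq S_3$ and $F=\mbb{Q}_4$.
\item A cubic subfield $K\subset L$ is then totally ramified over $\mbb{Q}_2$, so $K\simeq\mbb{Q}_2(\sqrt[3]{2})$ because every unit of $\mbb{Z}_2$ is a cube.
\item Therefore $L=FK\simeq F[X]/(X^3+2)$.
\end{itemize}
In your Kummer language, Frobenius-stability of the Kummer group of $L/F$ forces the class of $\zeta$ to be trivial.

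The quadratic cases need smaller repairs. Your descent claim, that every quadratic extension of $F$ is $F(\sqrt{d})$ with $d\in\mbb{Q}_2^{\times}$, fails for $F\neq\mbb{Q}_2$. Also, ``the $3$-part is tame'' does not prove $L=F(\sqrt{\Delta})$. Both are settled by group theory. If $\sharp I=2$, then $I$ is generated by a transposition, which is normal in $G\subseteq S_3$ only when $G=I$. So $F=\mbb{Q}_2$, and since the transposition acts as an odd permutation of the $2$-division roots, $L=\mbb{Q}_2(\sqrt{\Delta})$.

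You also need the direction the statement actually asserts. If $v_2(\Delta)$ is odd or $\widetilde{\Delta}\equiv-1\pmod 4$, then $\mbb{Q}_2(\sqrt{\Delta})\subseteq L$ is wildly ramified, which forces $\sharp I=2$.

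Finally, you leave the (2.2)/(2.3) dichotomy as ``should translate.'' The paper also takes it from Kraus: in (2.2), $\Delta$ is a square in $\mbb{Q}_2^{\mrm{ur}}$ and a Newton polygon shows the $2$-division polynomial has no root there; otherwise $v_L(\mfrak{D}_{L/F})=0$. A citation is therefore enough, but you should state it as a precise input rather than a conjecture.
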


\subsection{Examples}
The following table collects selected explicit elliptic curves illustrating the cases in 
Theorems~\ref{thm:PotentialMulti}--\ref{thm:Additive:p=2}.
\begin{table}[H]
\begin{center}
\begin{tabular}{|c|c||c|c||c|c|} \hline 
\multicolumn{6}{|c|}{Theorem \ref{thm:PotentialMulti}} \\ \hline
(1.1) & $E=$\href{https://www.lmfdb.org/EllipticCurve/Q/30a1/}{30a1} & 
(1.2) & $E=$\href{https://www.lmfdb.org/EllipticCurve/Q/30a3/}{30a3} & 
(1.3) & $E=$\href{https://www.lmfdb.org/EllipticCurve/Q/129b1/}{129b1} \\ \hline
(2.1) & $E=$\href{https://www.lmfdb.org/EllipticCurve/Q/150c1/}{150c1} & 
(2.2) & $E=$\href{https://www.lmfdb.org/EllipticCurve/Q/42a1/}{42a1} & 
(2.3) & $E=$\href{https://www.lmfdb.org/EllipticCurve/Q/33a1/}{33a1} \\ \hline
\end{tabular}

\vspace{1em}

\begin{tabular}{|c|c||c|c||c|c||c|c|} \hline 
\multicolumn{8}{|c|}{Theorem \ref{thm:Multiplicative:p=2}} \\ \hline
$L=\mbb{Q}_2$ & $E=$\href{https://www.lmfdb.org/EllipticCurve/Q/30a6/}{30a6} & 
$L=\mbb{Q}_4$ & $E=$\href{https://www.lmfdb.org/EllipticCurve/Q/78a3/}{78a3} & 
$L=L_1$ & $E=$\href{https://www.lmfdb.org/EllipticCurve/Q/42a4/}{42a4} & 
$L=L_2$ & $E=$\href{https://www.lmfdb.org/EllipticCurve/Q/78a4/}{78a4} \\ \hline
$L=L_3$ & $E=$\href{https://www.lmfdb.org/EllipticCurve/Q/50a1/}{50a1} & 
$L=L_4$ & $E=$\href{https://www.lmfdb.org/EllipticCurve/Q/26a2/}{26a2} & 
$L=L_5$ & $E=$\href{https://www.lmfdb.org/EllipticCurve/Q/14a5/}{14a5} & 
$L=L_6$ & $E=$\href{https://www.lmfdb.org/EllipticCurve/Q/30a7/}{30a7} \\ \hline
\end{tabular}

\vspace{1em}

\resizebox{\textwidth}{!}{%
\begin{tabular}{|c|c||c|c||c|c|} \hline 
\multicolumn{6}{|c|}{Theorem \ref{thm:PotentialGood}} \\ \hline
$(1.1)$ & $E=$\href{https://www.lmfdb.org/EllipticCurve/Q/147c1/}{147c1} & 
$(1.2)$ & $E=$\href{https://www.lmfdb.org/EllipticCurve/Q/294b1/}{294b1} & 
$(1.3)$ & $E=$\href{https://www.lmfdb.org/EllipticCurve/Q/50b1/}{50b1} \\ \hline
$(1.4)$ & $E=$\href{https://www.lmfdb.org/EllipticCurve/Q/225a1/}{225a1} & 
$(1.5)$ & $E=$\href{https://www.lmfdb.org/EllipticCurve/Q/1210f1/}{1210f1} & 
$(2.1)$ & $E=$\href{https://www.lmfdb.org/EllipticCurve/Q/150a3/}{150a3} ($p=5$) \\ 
\hline
$(2.2)$ & $E=$\href{https://www.lmfdb.org/EllipticCurve/Q/150a1/}{150a1} ($p=5$)& 
$(2.3)$ & $E=$\href{https://www.lmfdb.org/EllipticCurve/Q/49a1/}{49a1} ($p=7$)& 
$(2.4)$ & $E=$\href{https://www.lmfdb.org/EllipticCurve/Q/1274j1/}{1274j1} ($p=7$) \\ \hline 
$(3.1)$ & $E=$\href{https://www.lmfdb.org/EllipticCurve/Q/294d1/}{294d1} ($p=7$)& 
$(3.2)$ & $E=$\href{https://www.lmfdb.org/EllipticCurve/Q/637a1/}{637a1} ($p=7$)& 
$(3.3)$ & $E=$\href{https://www.lmfdb.org/EllipticCurve/Q/50a1/}{50a1} ($p=5$) \\ \hline 
$(3.4)$ & $E=$\href{https://www.lmfdb.org/EllipticCurve/Q/1089a1/}{1089a1} ($p=11$)& 
$(3.5)$ & $E=$\href{https://www.lmfdb.org/EllipticCurve/Q/350f1/}{350f1} ($p=5$)& 
$(4.1)$ & $E=$\href{https://www.lmfdb.org/EllipticCurve/Q/1323b1/}{1323b1} ($p=7$) \\ \hline 
$(4.2)$ & $E=$\href{https://www.lmfdb.org/EllipticCurve/Q/833a2/}{833a2} ($p=7$)& 
$(4.3)$ & $E=$\href{https://www.lmfdb.org/EllipticCurve/Q/350d1/}{350d1} ($p=5$)& 
$(5.1)$ & $E=$\href{https://www.lmfdb.org/EllipticCurve/Q/147b1/}{147b1} ($p=7$) \\ \hline 
$(5.2)$ & $E=$\href{https://www.lmfdb.org/EllipticCurve/Q/294a1/}{294a1} ($p=7$)& 
$(5.3)$ & $E=$\href{https://www.lmfdb.org/EllipticCurve/Q/50a3/}{50a3} ($p=5$)& 
$(5.4)$ & $E=$\href{https://www.lmfdb.org/EllipticCurve/Q/225b1/}{225b1} ($p=5$) \\ \hline 
$(5.5)$ & $E=$\href{https://www.lmfdb.org/EllipticCurve/Q/1210m1/}{1210m1} ($p=11$)& 
$(6.1)$ & $E=$\href{https://www.lmfdb.org/EllipticCurve/Q/150b3/}{150b3} ($p=5$)& 
$(6.2)$ & $E=$\href{https://www.lmfdb.org/EllipticCurve/Q/150b1/}{150b1} ($p=5$) \\ \hline 
$(6.3)$ & $E=$\href{https://www.lmfdb.org/EllipticCurve/Q/49a3/}{49a3} ($p=7$)& 
$(6.4)$ & $E=$\href{https://www.lmfdb.org/EllipticCurve/Q/1274m1/}{1274m1} ($p=7$)& 
$(7.1)$ & $E=$\href{https://www.lmfdb.org/EllipticCurve/Q/294e1/}{294e1} ($p=7$) \\ \hline 
$(7.2)$ & $E=$\href{https://www.lmfdb.org/EllipticCurve/Q/441a1/}{441a1} ($p=7$)& 
$(7.3)$ & $E=$\href{https://www.lmfdb.org/EllipticCurve/Q/50b3/}{50b3} ($p=5$)& 
$(7.4)$ & $E=$\href{https://www.lmfdb.org/EllipticCurve/Q/1089b1/}{1089b1} ($p=11$) \\ \hline 
$(7.5)$ & $E=$\href{https://www.lmfdb.org/EllipticCurve/Q/350e1/}{350e1} ($p=5$)& 
 & & 
 &  \\ \hline 
\end{tabular}%
}

\vspace{1em}

\resizebox{\textwidth}{!}{%
\begin{tabular}{|c|c||c|c|} \hline 
\multicolumn{4}{|c|}{Theorem \ref{thm:Additive:p=3}} \\ \hline
(1.1) & $E=$\href{https://www.lmfdb.org/EllipticCurve/Q/54b1/}{54b1} & 
(1.2) & $E=$\href{https://www.lmfdb.org/EllipticCurve/Q/36a4/}{36a4} \\ \hline 
(1.3) & $E=$\href{https://www.lmfdb.org/EllipticCurve/Q/441d1/}{441d1} &
(2.1) & $E=$\href{https://www.lmfdb.org/EllipticCurve/Q/27a2/}{27a2} 
($K \simeq \mbb{Q}_3[X]/(X^3+3)$) \\ \hline
(2.1) &  
$E=$\href{https://www.lmfdb.org/EllipticCurve/Q/189a1/}{189a1} 
($K \simeq \mbb{Q}_3[X]/(X^3+12)$) &
(2.1) & $E=$\href{https://www.lmfdb.org/EllipticCurve/Q/324a1/}{324a1} 
($K \simeq \mbb{Q}_3[X]/(X^3+21)$) \\ \hline
\end{tabular}%
}

\vspace{1em}

\resizebox{\textwidth}{!}{%
\begin{tabular}{|c|c||c|c||c|c|} \hline 
\multicolumn{6}{|c|}{Theorem \ref{thm:Additive:p=2}} \\ \hline
(1.1) & $E=$\href{https://www.lmfdb.org/EllipticCurve/Q/56a4/}{56a4} ($L=L_3$) & 
(1.1) & $E=$\href{https://www.lmfdb.org/EllipticCurve/Q/96a3/}{96a3} ($L=L_4$) & 
(1.1) & $E=$\href{https://www.lmfdb.org/EllipticCurve/Q/56a3/}{56a3} ($L=L_5$) 
\\ \hline
(1.1) & $E=$\href{https://www.lmfdb.org/EllipticCurve/Q/120b3/}{120b3} ($L=L_6$) & 
(2.1) & $E=$\href{https://www.lmfdb.org/EllipticCurve/Q/20a3/}{20a3} ($L=L_1$) & 
(2.1) & $E=$\href{https://www.lmfdb.org/EllipticCurve/Q/36a4/}{36a4} ($L=L_2$) \\ \hline
(2.2) & $E=$\href{https://www.lmfdb.org/EllipticCurve/Q/44a2/}{44a2} 
($L\simeq F[X]/(X^3+2)$) & 
(2.3) & $E=$\href{https://www.lmfdb.org/EllipticCurve/Q/20a4/}{20a4} & 
 & \\ \hline
\end{tabular}%
}

\end{center}
\caption{Examples for the cases in  Theorems~\ref{thm:PotentialMulti}--\ref{thm:Additive:p=2}}\label{table:examples}
\end{table}
\section{Ramification theory}\label{sect:Ramification}
Let $K$ be a finite extension of $\mbb{Q}_p$ and $\overline{K}$ 
a fixed algebraic closure, and $v_K$ the valuation on $\overline{K}$ 
normalized by $v_K(K^{\times})=\mbb{Z}$. 

\subsection{Ramification breaks}
In this section, we give a review of the classical ramification 
theory for finite separable extensions of $K$ studied in \cite[Appendice]{Del84} and 
\cite{Hel91}. 
Let $L$ be a finite extension of $K$.
Set $\Gamma(L/K):=\mathrm{Hom}_K(L,\overline{K})$.
The function $\mbf{i}_{L/K}$ on $\Gamma(L/K)$ is defined by
\[
\mbf{i}_{L/K}(\sigma)=\underset{a \in \mcal{O}_L}{\mathrm{inf}}
v_L(\sigma(a)-a)=v_L(\sigma x-x), \quad \sigma\in\Gamma(L/K),
\]
where $\mcal{O}_L=\mcal{O}_K[x]$
(cf.\ \cite[Chapter III, Proposition 12]{Ser68}), and $v_L$ is the
valuation on $\overline{K}$ normalized by $v_L(L^{\times})=\mbb{Z}$.
Since the different $\mfrak{D}_{L/K}$ is generated by $f'(x)$,
where $f$ is the minimal polynomial of $x$ over $K$, we have
\[
v_L(\mfrak{D}_{L/K})=\sum_{\sigma\not=1}\mbf{i}_{L/K}(\sigma).
\]
For any real number $m\geq-1$, the {\it lower ramification set}
$\Gamma(L/K)_m$ is defined by 
\[
\Gamma(L/K)_m=\{\sigma \in \Gamma(L/K) \, |\, \mbf{i}_{L/K}(\sigma) 
\geq m+1 \}.
\]
The {\it Herbrand function} $\varphi_{L/K}$ of $L/K$ is defined by 
\[
\varphi_{L/K}(m)=\frac{1}{e_{L/K}}\int_0^m \sharp \Gamma(L/K)_y \,dy
\]
for $m>0$ and $\varphi_{L/K}(m)=m$ for $-1 \leq m \leq 0$. 
Let $\psi_{L/K}$ be its inverse function. 
For any real number $m \geq -1$, 
the {\it upper ramification set} $\Gamma(L/K)^m$ is defined by 
$\Gamma(L/K)^m=\Gamma(L/K)_{\psi_{L/K}(m)}$. 
If $L/K$ is a Galois extension, then $\Gamma(L/K)^m$ 
coincides with the upper ramification group 
$\mathrm{Gal}(L/K)^m$ defined in \cite[Chapter IV]{Ser68}. 
\begin{proposition}[\cite{Del84}, Proposition A.6.1]
Let $L$ be a finite extension of $K$ and 
$\widetilde{L}$ the Galois closure of $L$ over $K$. 
Then, for any real number $m \geq -1$, 
the following statements are equivalent:
\begin{itemize}
\item[\rm{(i)}] $\Gamma(L/K)^m=1$
\item[\rm{(ii)}] $\mrm{Gal}(\widetilde{L}/K)^m=1$
\item[\rm{(iii)}] $\displaystyle{\sum_{\sigma \not= 1}}  
v_K(\sigma x -x) + \underset{\sigma \not= 1}{\rm{sup}} \, 
v_K(\sigma x - x)<m+1$. 
\end{itemize}
\end{proposition}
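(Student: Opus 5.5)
The proof runs through the chain of equivalences (i)$\Leftrightarrow$(ii) and (ii)$\Leftrightarrow$(iii), with $\mcal{O}_L=\mcal{O}_K[x]$ fixed throughout.

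\emph{(i)$\Leftrightarrow$(ii).} Put $H=\mrm{Gal}(\widetilde{L}/L)$ and $G=\mrm{Gal}(\widetilde{L}/K)$, so that $\Gamma(L/K)=G/H$ as a set of cosets. The first step is to show that
\[
\mbf{i}_{L/K}(gH)=\frac{1}{e_{\widetilde{L}/L}}\sum_{h\in H}\mbf{i}_{\widetilde{L}/K}(gh),
\]
which is the standard formula from \cite[Chapter IV, Proposition 3]{Ser68}. For Galois $L$ one proves it by factoring the polynomial $\prod_{h\in H}(X-gh(y))$ for a generator $y$ of $\mcal{O}_{\widetilde{L}}$; that argument does not use the normality of $H$ and so carries over. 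From this formula, the argument of Herbrand's theorem gives the compatibility $\varphi_{\widetilde{L}/K}=\varphi_{L/K}\circ\varphi_{\widetilde{L}/L}$, and the image of $G^m$ in $G/H$ is then $\Gamma(L/K)^m$.

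Next I would show that $G^m$ is a normal subgroup contained in $H$ exactly when its image is trivial. Normality comes from conjugation invariance of the filtration. A normal subgroup contained in $H$ is contained in the core $\bigcap_g gHg^{-1}$, and this core is trivial because $\widetilde{L}$ is the Galois closure of $L$. Hence $G^m=1$ if and only if $\Gamma(L/K)^m=1$.

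\emph{(ii)$\Leftrightarrow$(iii).} Write $i_\sigma=v_K(\sigma x-x)=\mbf{i}_{L/K}(\sigma)/e$ with $e=e_{L/K}$, and order the values $i_\sigma$ for $\sigma\neq 1$. The Herbrand function $\varphi_{L/K}$ is piecewise linear, and its slope on $(j-1,j)$ is $\#\Gamma(L/K)_{j-1}/e$. The condition $\Gamma(L/K)^m=1$ means $\psi_{L/K}(m)>s-1$, where $s=\max_{\sigma\neq1}\mbf{i}_{L/K}(\sigma)$; equivalently $m>\varphi_{L/K}(s-1)$, with a little care at the breakpoint.

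Then I would evaluate $\varphi_{L/K}(s-1)$ directly. Counting each conjugate $\sigma$ over the range where it lies in $\Gamma_y$ gives
\[
\int_0^{s-1}\#\Gamma(L/K)_y\,dy=(s-1)+\sum_{\sigma\neq1}\bigl(\min(\mbf{i}_{L/K}(\sigma),s)-1\bigr)
=\sum_{\sigma\neq1}\mbf{i}_{L/K}(\sigma)+s-[L:K],
\]
and the identity $[L:K]=e\cdot f$ makes the residue-degree contribution cancel. After dividing by $e$, the condition $m>\varphi_{L/K}(s-1)$ should be precisely $\sum_{\sigma\neq1}i_\sigma+\sup_{\sigma\neq1} i_\sigma<m+1$.

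The main obstacle is that bookkeeping. Conjugates with $\mbf{i}_{L/K}(\sigma)=0$ (the unramified directions) and the normalization $\varphi_{L/K}(m)=m$ on $[-1,0]$ have to be reconciled with the $-1$ shifts. I would handle this by splitting first into the tame part with $\mbf{i}_{L/K}=1$ and the wild part, and checking the formula on a totally ramified example before writing it in general.
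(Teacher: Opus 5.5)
The paper gives no proof of this statement; it is quoted from Deligne. Your proposal therefore has to stand on its own.

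Your first half is the standard argument and is sound. The coset version of Serre's Proposition IV.3 never uses normality of $H$. Neither does the lemma behind Herbrand's theorem, namely $\mathbf{i}_{L/K}(gH)-1=\varphi_{\widetilde{L}/L}(j(gH)-1)$ with $j(gH)=\max_{h\in H}\mathbf{i}_{\widetilde{L}/K}(gh)$. Transitivity of $\varphi$ only needs $G_u$ to be normal in $G$, so that $\sharp(G_uH/H)=|G_u|/|H_u|$. The core argument then gives (i)$\Leftrightarrow$(ii).

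The second half fails as written. For $\sigma\neq1$ one has
\[
\int_0^{s-1}\mathbf{1}\bigl[\mathbf{i}_{L/K}(\sigma)\geq y+1\bigr]\,dy=\max\bigl(\mathbf{i}_{L/K}(\sigma)-1,\,0\bigr),
\]
not $\min(\mathbf{i}_{L/K}(\sigma),s)-1$. The $[L:K]-e$ conjugates that are nontrivial on the maximal unramified subextension $L_0$ have $\mathbf{i}_{L/K}(\sigma)=0$, so they contribute $0$, not $-1$.

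With your count, dividing by $e$ gives $\varphi_{L/K}(s-1)=\sum_{\sigma\neq1}i_\sigma+\sup_{\sigma\neq1}i_\sigma-f$. That turns (iii) into the criterion ``$<m+f$'', which is false when $f>1$. For example, for a tame Galois $L/K$ with $e=f=2$ your formula gives $\varphi_{L/K}(0)=-1$ instead of $0$. The identity $[L:K]=ef$ cancels nothing; the $f$-dependence should never have appeared.

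The fix is one line. Exactly $e-1$ conjugates $\sigma\neq1$ satisfy $\mathbf{i}_{L/K}(\sigma)\geq1$, namely those trivial on $L_0$. So the integral equals $\sum_{\sigma\neq1}\mathbf{i}_{L/K}(\sigma)+s-e$. Hence $\varphi_{L/K}(s-1)=\sum_{\sigma\neq1}i_\sigma+\sup_{\sigma\neq1}i_\sigma-1$, and $m>\varphi_{L/K}(s-1)$ is exactly (iii).

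Equivalently, pass to $L/L_0$ first. For $m>-1$, both $\Gamma(L/K)^m$ and the left side of (iii) only see conjugates over $L_0$, and $\varphi_{L/K}=\varphi_{L/L_0}$. For $s=0$ (unramified $L\neq K$), use $\varphi_{L/K}(-1)=-1$ directly.

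Two smaller points:
\begin{itemize}
\item The slope of $\varphi_{L/K}$ on $(j-1,j)$ is $\sharp\Gamma(L/K)_j/e$, not $\sharp\Gamma(L/K)_{j-1}/e$.
\item No care is needed at the breakpoint. The equivalence $\Gamma(L/K)^m=1\iff s<\psi_{L/K}(m)+1$ is already strict, and $\varphi_{L/K}$ is strictly increasing.
\end{itemize}
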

In this paper, we use the upper ramification filtration shifted by one as follows. 
For a finite extension $L$ of $K$ and a real number $m \geq 0$, 
we set $\Gamma(L/K)^{(m)}=\Gamma(L/K)^{m-1}$.
If $L/K$ is a Galois extension with Galois group $G$, 
then $\Gamma(L/K)^{(m)}=G^{(m)}$ for $m \geq 0$, 
where $G^{(m)}$ is the upper ramification group 
defined in 
\cite[Section 1]{Fon85} (cf. Remarques 1.2 in {\it ibid.}). 
We define the {\it maximal upper ramification break} of $L/K$ by 
\[
u_{L/K}=\mrm{sup} \, \{ m \in \mbb{R}_{\geqslant 0} \, |\, \Gamma(L/K)^{(m)}\not=1\}.
\]
We also define the {\it maximal lower ramification break} of $L/K$ by 
$i_{L/K}=\underset{\sigma \not= 1}{\rm{sup}} \, v_K(\sigma x - x)$. 
For $L=K$, we set $u_{L/K}=0$ by convention. 
\begin{corollary}
\label{cor:GaloisClosure}
    Let $L$ be a finite extension of $K$ and $\widetilde{L}$ the Galois closure 
    of $L$ over $K$. Then we have $u_{L/K}=u_{\widetilde{L}/K}$ and 
    \[
    u_{L/K}=v_K(\mfrak{D}_{L/K})+i_{L/K}.
    \]
\end{corollary}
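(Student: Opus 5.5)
The plan is to deduce both assertions directly from the Deligne proposition just quoted (\cite{Del84}, Proposition A.6.1), using the shifted filtration $\Gamma(L/K)^{(m)}=\Gamma(L/K)^{m-1}$. Throughout, fix $x$ with $\mcal{O}_L=\mcal{O}_K[x]$.

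For the first equality $u_{L/K}=u_{\widetilde{L}/K}$, note that the equivalence (i)$\Leftrightarrow$(ii) of that proposition holds for every real $m\ge -1$. Hence for every $m\ge 0$ we have $\Gamma(L/K)^{(m)}\neq 1$ if and only if $\mrm{Gal}(\widetilde{L}/K)^{(m)}\neq 1$. The two sets whose suprema define $u_{L/K}$ and $u_{\widetilde{L}/K}$ are therefore equal, so the suprema agree. The degenerate case $L=K$ is trivial, since then $\widetilde{L}=K$ and both sides are $0$ by convention.

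For the formula, first identify the sum in (iii) with the different. Since $\mfrak{D}_{L/K}$ is generated by $f'(x)=\prod_{\sigma\neq 1}(x-\sigma x)$, we get
\[
v_K(\mfrak{D}_{L/K})=\frac{1}{e_{L/K}}\,v_L(\mfrak{D}_{L/K})=\sum_{\sigma\neq 1}v_K(\sigma x-x).
\]
Condition (iii) applied at $m-1$ (allowed because $m\ge 0$) then says that $\Gamma(L/K)^{(m)}=1$ exactly when
\[
v_K(\mfrak{D}_{L/K})+i_{L/K}<m .
\]
Hence the set $\{m\ge 0 \mid \Gamma(L/K)^{(m)}\neq 1\}$ is the interval $[0,\,v_K(\mfrak{D}_{L/K})+i_{L/K}]$. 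Its supremum is $v_K(\mfrak{D}_{L/K})+i_{L/K}$, which gives the claimed formula.

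The only point needing care is that this interval is nonempty, i.e.\ that $v_K(\mfrak{D}_{L/K})+i_{L/K}\ge 0$, and that the edge cases are consistent. Nonnegativity holds because each $v_K(\sigma x-x)\ge 0$, as $x$ is integral. If $L\neq K$ is unramified, every $\sigma x-x$ is a unit, so both sides are $0$, matching the value obtained from the definition. No step is a real obstacle; the argument is a direct translation of the quoted proposition.
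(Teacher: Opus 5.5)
Your argument is correct and is essentially the paper's approach. The paper gives no separate proof and treats the corollary as an immediate consequence of Deligne's Proposition A.6.1, exactly as you do. It reads off $u_{L/K}=u_{\widetilde{L}/K}$ from (i)$\Leftrightarrow$(ii), and reads off the formula from (i)$\Leftrightarrow$(iii) via the shift $\Gamma(L/K)^{(m)}=\Gamma(L/K)^{m-1}$ and the identification $\sum_{\sigma\neq1}v_K(\sigma x-x)=v_K(\mfrak{D}_{L/K})$.
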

\begin{proposition}
\label{prop:break}
Let $L$ be a finite extension of $K$. 
\begin{itemize}
    \item[\rm{(i)}] $L/K$ is unramified if and only if $u_{L/K}=0$.
    \item[\rm{(ii)}] $L/K$ is tamely ramified if and only if $u_{L/K} \leq 1$. 
\end{itemize}
\end{proposition}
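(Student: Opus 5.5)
The plan is to reduce to the Galois case and then read both statements off the upper numbering filtration. First I would replace $L$ by its Galois closure $\widetilde{L}$ over $K$. By Corollary~\ref{cor:GaloisClosure} we have $u_{L/K}=u_{\widetilde{L}/K}$. It is standard that $L/K$ is unramified (resp.\ tamely ramified) if and only if $\widetilde{L}/K$ is: unramified and tamely ramified extensions are stable under taking conjugates and composita, and the properties pass to subextensions. So it suffices to treat a Galois extension $L/K$ with group $G$. In that case $\Gamma(L/K)^{(m)}=G^{(m)}=G^{m-1}$, where $G^{t}$ is Serre's upper numbering filtration. The case $L=K$ holds by the convention $u_{L/K}=0$, so assume $G\neq 1$.

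Next I would record where the filtration starts. Since $\psi_{L/K}(t)=t$ for $-1\le t\le 0$, we have $G^{t}=G_{t}$ in that range. Concretely, $G^{-1}=G$, and $G^{t}=G_0$ is the inertia group for $-1<t\le 0$. In shifted terms this reads $G^{(0)}=G\neq 1$ and $G^{(m)}=G_0$ for $0<m\le 1$. Moreover, the wild inertia group is $G_1=\bigcup_{t>0}G^{t}$. This holds because $G_y=G_1$ for $0<y\le 1$ (the lower filtration is indexed by integers), so $G^{t}=G_1$ for $0<t\le\varphi_{L/K}(1)$, and the $G^t$ decrease. Equivalently, $G_1=\bigcup_{m>1}G^{(m)}$.

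With these facts both statements are immediate.

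For (i): if $L/K$ is unramified then $G_0=1$, so $G^{(m)}=1$ for every $m>0$. Since $G^{(0)}\neq 1$, the supremum defining $u_{L/K}$ equals $0$. Conversely, if $u_{L/K}=0$ then $G^{(m)}=1$ for all $m>0$; in particular $G_0=G^{(1)}=1$, so $L/K$ is unramified.

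For (ii): $L/K$ is tamely ramified if and only if $G_1=1$. By the description of $G_1$ above, this holds if and only if $G^{(m)}=1$ for all $m>1$, which is exactly the condition $u_{L/K}\le 1$.

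The only delicate point is the boundary behaviour of the supremum, i.e.\ checking that the nontrivial groups $G^{(m)}$ occur for $m\le 1$ but no larger $m$ exactly in the tame case. This is handled by the identification $G_1=\bigcup_{m>1}G^{(m)}$, which I expect to be the one step needing a careful check of the Herbrand function near $0$, together with left-continuity of the upper filtration.
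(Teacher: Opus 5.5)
Your proposal is correct and follows essentially the same route as the paper. You reduce to the Galois closure via Corollary~\ref{cor:GaloisClosure}, then use that $G^{(m)}$ is the inertia group for $0<m\le 1$ and that $\bigcup_{m>1}G^{(m)}$ is the wild inertia group; you also make explicit what the paper leaves to Serre's Chapter IV (preservation of unramified/tame under Galois closure, $G^{(0)}=G$, and the check of the Herbrand function near $0$).
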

\begin{proof}
By Corollary \ref{cor:GaloisClosure}, we may assume $L/K$ is a Galois extension. 
Let $G$ be the Galois group of $L/K$. 
It is well known that, for any real number $0<m \leq 1$, 
$G^{(m)}$ is the inertia subgroup of $G$ and 
$\cup_{m'>1} G^{(m')}$ is the wild inertia subgroup of $G$ 
(cf. \cite[Chapter IV]{Ser68}). 
These equalities give the results. 
\end{proof}
\begin{proposition}
\label{prop:composite}
Let $L$ and $M$ be two finite extensions of $K$. Then we have
\[ u_{LM/K}=\max \{ u_{L/K},u_{M/K} \}.\]
\end{proposition}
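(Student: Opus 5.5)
We first pass to Galois closures. Let $\widetilde{L}$ and $\widetilde{M}$ be the Galois closures of $L$ and $M$ over $K$, and let $N=\widetilde{L}\widetilde{M}$. Then $N$ is Galois over $K$ and contains the Galois closure of $LM$. In fact $N$ is exactly that Galois closure, because $\widetilde{L}$ and $\widetilde{M}$ are both contained in the Galois closure of $LM$. By Corollary \ref{cor:GaloisClosure} we have
\[
u_{L/K}=u_{\widetilde{L}/K},\qquad u_{M/K}=u_{\widetilde{M}/K},\qquad u_{LM/K}=u_{N/K}.
\]
So it suffices to prove the statement when $L$ and $M$ are Galois over $K$. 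In that case $LM$ is Galois as well, and all the ramification sets are genuine upper ramification groups.

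Now let $G=\mrm{Gal}(LM/K)$, with normal subgroups $H_L=\mrm{Gal}(LM/L)$ and $H_M=\mrm{Gal}(LM/M)$. These satisfy $H_L\cap H_M=1$. The key input is Herbrand's theorem: upper numbering is compatible with quotients, so for every $m\geq 0$,
\[
G^{(m)}H_L/H_L=\mrm{Gal}(L/K)^{(m)}.
\]
This holds because the shift by one in the indexing does not affect the statement. The analogous equality holds for $M$.

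Suppose first that $m>\max\{u_{L/K},u_{M/K}\}$. Then $\mrm{Gal}(L/K)^{(m)}=1$ and $\mrm{Gal}(M/K)^{(m)}=1$. By the quotient compatibility, $G^{(m)}\subset H_L\cap H_M=1$. Hence $u_{LM/K}\leq \max\{u_{L/K},u_{M/K}\}$.

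Conversely, suppose $m<u_{L/K}$. Then $\mrm{Gal}(L/K)^{(m)}\neq 1$. Its preimage data gives $G^{(m)}\not\subset H_L$, and in particular $G^{(m)}\neq 1$. This shows $u_{LM/K}\geq u_{L/K}$, and by symmetry $u_{LM/K}\geq u_{M/K}$. Combining the two inequalities gives equality.

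The main point requiring care is the bookkeeping of the supremum: the breaks are defined as suprema over the sets $\{m: \Gamma^{(m)}\neq 1\}$, and we must make sure that the strict versus non-strict inequalities above really yield equality. They do, because we only compared the two suprema via the open conditions $m>\cdot$ and $m<\cdot$. Apart from this, the argument rests on Herbrand's theorem, \cite[Chapter IV, Proposition 14]{Ser68}, applied in Fontaine's shifted normalization (cf. \cite[Remarques 1.2]{Fon85}). We also use the convention $u_{K/K}=0$ to handle trivial cases consistently.
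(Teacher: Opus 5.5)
Your proof is correct and follows the paper's route. The paper reduces to the Galois case via Corollary \ref{cor:GaloisClosure} and then cites \cite{Yos10} for the Galois case; you make the same reduction (correctly noting that the Galois closure of $LM$ is $\widetilde{L}\widetilde{M}$) and then prove the Galois case directly from Herbrand's quotient compatibility, where the lower bound also uses the fact that the upper filtration is decreasing.
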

\begin{proof}
By Corollary \ref{cor:GaloisClosure}, 
it suffices to show the case where $L$ and $M$ are Galois extensions of $K$.
In this case, the assertion is a basic property of upper ramification breaks 
(cf. \cite{Yos10}, Remark 3.4 for the proof).
\end{proof}
\begin{proposition}[\cite{Del84}, Proposition A.6.2]
\label{prop:beta}
Let $L$ be a finite extension of $K$ and 
$f$ the minimal polynomial of $x$ over $K$, 
where $x$ is a generator of $\mathcal{O}_L$ over $\mathcal{O}_K$. 
For a real number $m \geq 0$, the following statements are equivalent:
\begin{itemize}
\item[\rm{(i)}] $u_{L/K}<m$.
\item[\rm{(ii)}] For any $y \in \overline{K}$ with $v_K(f(y)) \geq m$, 
there exists $\sigma_0 \in \Gamma(L/K)$ such that 
$v_K(y-\sigma_0x)>v_K(y-\sigma x)$ for every 
$\sigma\in\Gamma(L/K)\setminus\{\sigma_0\}$. 
\end{itemize}
\end{proposition}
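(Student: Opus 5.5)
The plan is to reduce both directions to Corollary~\ref{cor:GaloisClosure}, which gives
\[
u_{L/K}=\sum_{\sigma\neq 1}v_K(\sigma x-x)+i_{L/K},
\]
and to combine it with the factorization $f(y)=\prod_{\sigma\in\Gamma(L/K)}(y-\sigma x)$ and the ultrametric inequality.

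\emph{Preliminary observation.} Work inside the Galois closure $\widetilde{L}$. Every $\sigma_0\in\Gamma(L/K)$ extends to an automorphism $\tilde\sigma_0$ of $\widetilde{L}$ over $K$, and $\tilde\sigma_0$ permutes the conjugates of $x$. Hence for each $\sigma_0$ the multiset $\{v_K(\sigma x-\sigma_0x)\}_{\sigma\neq\sigma_0}$ equals $\{v_K(\sigma x-x)\}_{\sigma\neq 1}$. In particular its sum is $v_K(\mfrak{D}_{L/K})$ and its maximum is $i_{L/K}$.

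\emph{(i)$\Rightarrow$(ii).} Let $y\in\overline{K}$ with $v_K(f(y))\ge m>u_{L/K}$. Choose $\sigma_0$ maximizing $t:=v_K(y-\sigma_0x)$. For $\sigma\neq\sigma_0$ there are two cases.
\begin{itemize}
\item If $v_K(\sigma x-\sigma_0x)<t$, the ultrametric inequality forces $v_K(y-\sigma x)=v_K(\sigma x-\sigma_0x)$.
\item Otherwise $v_K(y-\sigma x)\ge t$, so maximality gives $v_K(y-\sigma x)=t\le v_K(\sigma x-\sigma_0x)$.
\end{itemize}
In either case $v_K(y-\sigma x)\le v_K(\sigma x-\sigma_0 x)$. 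Now suppose uniqueness fails, i.e.\ some $\sigma_1\neq\sigma_0$ has $v_K(y-\sigma_1x)=t$. Then $t\le v_K(\sigma_1x-\sigma_0x)\le i_{L/K}$. Summing over all $\sigma$ and using the preliminary observation gives
\[
v_K(f(y))\le v_K(\mfrak{D}_{L/K})+i_{L/K}=u_{L/K}<m,
\]
a contradiction.

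\emph{(ii)$\Rightarrow$(i).} I argue by contraposition: assume $u_{L/K}\ge m$ and $L\neq K$. (For $L=K$ and $m=0$, (ii) holds vacuously while (i) reads $0<0$, so the degenerate case has to be excluded or absorbed into the convention $u_{K/K}=0$.) Pick $\sigma_1\neq 1$ with $v_K(\sigma_1x-x)=i:=i_{L/K}$. Let $S$ be the set of $\sigma\neq1$ with $v_K(\sigma x-x)=i$, which contains $\sigma_1$. Choose $c\in\overline{K}$ with $v_K(c)=0$ whose residue class is nonzero and differs from the residues of $(\sigma x-x)/(\sigma_1x-x)$ for all $\sigma\in S$. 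This is possible because the residue field of $\overline{K}$ is infinite. Put $y=x+c(\sigma_1x-x)$. Then:
\begin{itemize}
\item $v_K(y-x)=i$, and $v_K(y-\sigma x)=i$ for every $\sigma\in S$;
\item for $\sigma\notin S\cup\{1\}$ we have $v_K(\sigma x-x)<i$, so $v_K(y-\sigma x)=v_K(\sigma x-x)$.
\end{itemize}
Consequently $v_K(f(y))=i+\sum_{\sigma\neq1}v_K(\sigma x-x)=u_{L/K}\ge m$. On the other hand, the maximum of $v_K(y-\sigma x)$ equals $i$ and is attained at both $\sigma=1$ and $\sigma=\sigma_1$, so no $\sigma_0$ as in (ii) exists. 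Thus (ii) fails.

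The main obstacle is this construction of $y$ in the second direction. The naive midpoint choice $y=(x+\sigma_1x)/2$ breaks down when $p=2$. The generic-residue choice of $c$ is what guarantees that no third conjugate becomes strictly closer to $y$ than $x$ and $\sigma_1x$, while keeping every distance that enters $v_K(f(y))$ exactly computable.
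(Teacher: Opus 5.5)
The paper gives no proof of this proposition; it simply quotes Deligne [Del84, A.6.2]. Your argument is correct and self-contained, and it uses only Corollary~\ref{cor:GaloisClosure}, i.e.\ $u_{L/K}=v_K(\mfrak{D}_{L/K})+i_{L/K}$.

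For (i)$\Rightarrow$(ii), everything rests on your termwise bound $v_K(y-\sigma x)\le v_K(\sigma x-\sigma_0x)$ for a closest conjugate $\sigma_0x$. If the closest conjugate is not unique, you also get $t\le i_{L/K}$, and hence $v_K(f(y))\le u_{L/K}$.

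For (ii)$\Rightarrow$(i), your generic-residue point $y=x+c(\sigma_1x-x)$ has two equidistant closest conjugates. It attains this bound exactly, so the threshold in (ii) is precisely $u_{L/K}$.

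The citation is shorter. What your route buys is transparency: the criterion becomes a direct consequence of Corollary~\ref{cor:GaloisClosure}, so this part of Section~\ref{sect:Ramification} rests only on Deligne's Proposition A.6.1.

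Your caveat about $L=K$ is also right. With the convention $u_{K/K}=0$, condition (ii) holds vacuously, but (i) fails for $m=0$. So the statement as written needs $L\neq K$ or $m>0$. This is harmless for the paper, which only uses (i)$\Rightarrow$(ii), in Proposition~\ref{prop:criteria}, and that direction holds unconditionally.

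One correction to your closing remark: the midpoint $y=(x+\sigma_1x)/2$ fails not only for $p=2$. Take a cyclic totally ramified extension of degree $p$ with $x=\pi_L$. The ratios $(\sigma^kx-x)/(\sigma x-x)$ reduce to $k$, which covers all of $\mbb{F}_p^{\times}$. So for odd $p$ some third conjugate is always strictly closer to the midpoint than $x$ and $\sigma_1x$. The generic choice of $c$ is needed in general, not just when $2$ is a non-unit.
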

Let $E_K^e$ be the set of Eisenstein polynomials over $K$ of degree $e$. 
For $f(X), g(X) \in E_K^e$, we put $v_K(f,g)=v_K(f(\pi_g))$, where $\pi_g$ is a root of $g$. 
Then $v_K(f,g)$ defines an ultrametric on $E_K^e$ 
(\cite{Kra66} or \cite{Pau01} for proofs). 
\begin{proposition}[\cite{Pau01}, Lemma 4.2]
\label{prop:ultrametric}
Let $f(X)=X^e+a_{e-1}X^{e-1}+\cdots+a_1X+a_0$, 
$g(X)=X^e+b_{e-1}X^{e-1}+\cdots+b_1X+b_0 \in E_K^e$. 
Then we have 
\[
v_K(f,g)=\underset{i}{\mathrm{min}}\left\{ v_K(a_i-b_i)+\frac{i}{e} \right\}.
\]
\end{proposition}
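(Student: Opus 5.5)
The plan is to evaluate $f$ at a root of $g$ directly, by comparing $f$ with $g$ coefficient by coefficient. Let $\pi_g$ be a root of $g$. Since $g$ is Eisenstein of degree $e$ over $K$, the extension $K(\pi_g)/K$ is totally ramified of degree $e$ and $\pi_g$ is a uniformizer of it, so $v_K(\pi_g)=1/e$. Using $g(\pi_g)=0$ and the fact that both polynomials are monic of degree $e$, we get
\[
f(\pi_g)=f(\pi_g)-g(\pi_g)=\sum_{i=0}^{e-1}(a_i-b_i)\pi_g^{\,i}.
\]

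Next we compute the valuations of the individual terms. For each $i$ with $a_i\neq b_i$, the $i$-th term has valuation $v_K(a_i-b_i)+i/e$. Since $a_i-b_i\in K^{\times}$, the number $v_K(a_i-b_i)$ is an integer. Hence the fractional part of this valuation is $i/e$.

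The key step is that these fractional parts $i/e$, for $0\le i\le e-1$, are pairwise distinct modulo $\mbb{Z}$. So the nonzero terms of the sum have pairwise distinct valuations. By the strict form of the ultrametric inequality, the valuation of the sum is exactly the minimum of the valuations of its terms. This gives
\[
v_K(f(\pi_g))=\min_i\left\{v_K(a_i-b_i)+\frac{i}{e}\right\}.
\]
We only need to take the minimum over $i\le e-1$: the index $i=e$ contributes $a_e-b_e=0$, whose valuation is $+\infty$ and does not affect the minimum.

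The degenerate case $f=g$ is consistent with the formula, since both sides are then $+\infty$. Beyond the distinctness of the fractional parts there is no real obstacle. The one point to check carefully is that $v_K(\pi_g)$ is exactly $1/e$, which is precisely what the Eisenstein hypothesis on $g$ provides.
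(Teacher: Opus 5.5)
Your proof is correct and complete. The paper gives no proof of its own and only cites \cite{Pau01}. Your argument is the standard one behind that lemma: write $f(\pi_g)=\sum_{i<e}(a_i-b_i)\pi_g^{\,i}$, and note that the nonzero terms have valuations with pairwise distinct fractional parts $i/e$, so the valuation of the sum is the minimum.
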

\begin{proposition}
\label{prop:criteria}
For $f, g \in E_K^e$, we put $L=K[X]/(f)$ and $M=K[X]/(g)$. 
If $v_K(f,g)>u_{L/K}$, we have a $K$-isomorphism $L \simeq M$. 
\end{proposition}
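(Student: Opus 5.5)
The plan is to apply Proposition~\ref{prop:beta} with $y=\pi_g$, a root of $g$. Fix a root $\pi_f$ of $f$, so that $\mcal{O}_L=\mcal{O}_K[\pi_f]$, because $f$ is Eisenstein. The roots of $f$ in $\overline{K}$ are then exactly $\sigma\pi_f$ for $\sigma\in\Gamma(L/K)$. By definition, $v_K(f(\pi_g))=v_K(f,g)$.

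First, set $m=v_K(f,g)$. The hypothesis $v_K(f,g)>u_{L/K}$ says precisely $u_{L/K}<m$. We may also take $m\geq 0$, since $v_K(f(\pi_g))\geq 0$ follows from the coefficients of $f$ being integral and $v_K(\pi_g)>0$. Direction (i)$\Rightarrow$(ii) of Proposition~\ref{prop:beta} then applies to $y=\pi_g$, which satisfies $v_K(f(y))\geq m$. It yields a unique $\sigma_0\in\Gamma(L/K)$ such that $\pi_g$ is strictly closer to $\sigma_0\pi_f$ than to every other root $\sigma\pi_f$ of $f$.

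Second, we invoke Krasner's lemma, since this is the conclusion we want. We need $v_K(\pi_g-\sigma_0\pi_f)>v_K(\sigma_0\pi_f-\sigma\pi_f)$ for all $\sigma\neq\sigma_0$. This follows from the strict inequality by the ultrametric triangle inequality: $\sigma_0\pi_f-\sigma\pi_f=(\pi_g-\sigma\pi_f)-(\pi_g-\sigma_0\pi_f)$, and $v_K(\pi_g-\sigma\pi_f)<v_K(\pi_g-\sigma_0\pi_f)$, so $v_K(\sigma_0\pi_f-\sigma\pi_f)=v_K(\pi_g-\sigma\pi_f)<v_K(\pi_g-\sigma_0\pi_f)$. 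Krasner's lemma then gives $K(\sigma_0\pi_f)\subset K(\pi_g)$.

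Third, we compare degrees. Both $f$ and $g$ are Eisenstein of degree $e$, hence irreducible, so $[K(\sigma_0\pi_f):K]=[K(\pi_g):K]=e$. The inclusion is therefore an equality, and
\[
L\simeq K(\sigma_0\pi_f)=K(\pi_g)\simeq M
\]
as $K$-algebras.

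The only delicate point is the first step. One must check that Proposition~\ref{prop:beta} is applicable with $x=\pi_f$, and that the strict "closest root" property it yields is exactly the Krasner hypothesis after the triangle-inequality manipulation above; the rest is formal.
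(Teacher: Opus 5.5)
Your proof is correct and follows the paper's argument: apply Proposition~\ref{prop:beta} to $y=\pi_g$ to get a strictly closest root of $f$, use the ultrametric inequality to obtain Krasner's hypothesis, and conclude by comparing degrees. The paper phrases the Krasner step as $v_K(\pi_g-\pi_f)>i_{L/K}$ via a conjugate attaining $i_{L/K}$, while you check every conjugate directly; also, Proposition~\ref{prop:beta} needs a real $m$, so if $v_K(f,g)=\infty$ take any real $m>u_{L/K}$ (or note $f=g$).
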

\begin{proof}
By assumption, we have $v_K(f(\pi_g))>u_{L/K}$ for a root of $g$. 
According to Proposition \ref{prop:beta}, there exists a root $\pi_f$ of $f$ 
such that $v_K(\pi_g-\pi_f)>v_K(\pi_g-\sigma \pi_f)$ for any $\sigma \not= 1$. 
Take $\sigma_1 \not=1$ such that $i_{L/K}=v_K(\sigma_1 \pi_f-\pi_f)$. 
Then we have
\[
i_{L/K}=v_K(\sigma_1\pi_f-\pi_f)=v_K(\sigma_1\pi_f-\pi_g+\pi_g-\pi_f)=v_K(\sigma_1\pi_f-\pi_g).
\]
Thus we have $v_K(\pi_g-\pi_f)>i_{L/K}$. By Krasner's Lemma, we have 
$L \simeq K(\pi_f) \subset K(\pi_g) \simeq M$. 
Since both extensions $L$ and $M$ have the same degree $e$, 
we have a $K$-isomorphism $L \simeq M$. 
\end{proof}
\begin{proposition}\label{prop:BoundRamificationBreak}
Let $e_K$ be the absolute ramification index of $K$. 
Let $L$ be a finite extension of $K$ with ramification index $e$. 
Then we have the inequality
\[ u_{L/K} \leq 1+e_K \left( v_p(e)+\frac{1}{p-1} \right). \]
\end{proposition}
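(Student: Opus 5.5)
The plan is to combine Corollary~\ref{cor:GaloisClosure}, $u_{L/K}=v_K(\mfrak{D}_{L/K})+i_{L/K}$, with an upper bound for the different coming from the Eisenstein polynomial of a uniformizer, and a bound for $i_{L/K}$ coming from a Newton polygon.

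\textbf{Reduction to the totally ramified case.} Let $\widehat{K}$ be the completion of the maximal unramified extension of $K$. Base change along the unramified extension $\widehat{K}/K$ changes neither $v_K$ nor the Herbrand functions. So $u_{L/K}=u_{L\widehat{K}/\widehat{K}}$, and the ramification index stays $e$. Over $\widehat{K}$, the extension $L\widehat{K}$ is totally ramified of degree $e$. Alternatively, I would argue with the maximal unramified subextension $K_0$ of $L/K$ and check $u_{L/K}=u_{L/K_0}$ via Proposition~\ref{prop:composite}; I expect this reduction to be routine. If $e=1$ then $u=0$ and there is nothing to prove, so assume $e\ge 2$.

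\textbf{Bounding the different.} Take $x=\pi$, a uniformizer of $L$, with Eisenstein minimal polynomial $f(X)=X^e+a_{e-1}X^{e-1}+\cdots+a_0$. In $f'(\pi)=\sum_i i a_i\pi^{i-1}$ the terms have pairwise distinct valuations modulo $\mbb{Z}$ in $v_L$. Hence
\[
v_K(f'(\pi))\le v_K(e\pi^{e-1})=e_Kv_p(e)+\frac{e-1}{e},
\]
and this is exactly $v_K(\mfrak{D}_{L/K})$.

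\textbf{Bounding $i_{L/K}$.} Consider $g(Y)=f(\pi+Y)=\sum_{k\ge1}c_kY^k$, where $c_k=f^{(k)}(\pi)/k!$, $c_1=f'(\pi)$ and $c_e=1$. Its nonzero roots are $\sigma\pi-\pi$ for $\sigma\neq1$. By the Newton polygon of $g/Y$, the largest root valuation is at most the slope of the segment joining the points for $k=1$ and $k=e$. This gives
\[
i_{L/K}\le \frac{v_K(f'(\pi))}{e-1}.
\]
This step is the main point; I expect it to need only care with the Newton polygon convention.

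\textbf{Combining the bounds.} Therefore
\[
u_{L/K}\le \frac{e}{e-1}\,v_K(f'(\pi))\le 1+e_Kv_p(e)\frac{e}{e-1}.
\]
It remains to show $v_p(e)\,e/(e-1)\le v_p(e)+1/(p-1)$, which is equivalent to $v_p(e)/(e-1)\le 1/(p-1)$. Write $k=v_p(e)$. Then $e-1\ge p^k-1\ge k(p-1)$, since $p^k-1=(p-1)(1+p+\cdots+p^{k-1})$. This gives the claimed inequality.
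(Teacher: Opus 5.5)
Your reduction to the totally ramified case and your bound $v_K(\mathfrak{D}_{L/K})\le e_Kv_p(e)+\frac{e-1}{e}$ are correct; the latter is exactly the different bound the paper uses.

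\textbf{The gap.} The step $i_{L/K}\le v_K(f'(\pi))/(e-1)$ has the inequality backwards. Since $f'(\pi)=\prod_{\sigma\neq1}(\pi-\sigma\pi)$, the quantity $v_K(f'(\pi))/(e-1)$ is the \emph{average} of the $e-1$ numbers $v_K(\sigma\pi-\pi)$. But $i_{L/K}$ is their maximum, so $i_{L/K}\ge v_K(f'(\pi))/(e-1)$. The Newton polygon says the same thing: the first segment of the lower convex hull is at least as steep as the chord from $k=1$ to $k=e$. So the chord gives a \emph{lower} bound for the largest root valuation. Equality requires all $v_K(\sigma\pi-\pi)$ to coincide. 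That happens in degree $p$, but not in general.

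For a concrete failure, take $K=\mathbb{Q}_3$ and $L=\mathbb{Q}_3(\mu_3,\sqrt[3]{3})$, which is totally ramified with $e=6$. Proposition~\ref{prop:break0} with $p=3$ gives $v_3(\mathfrak{D}_{L/\mathbb{Q}_3})=\frac{11}{6}$ and $i_{L/\mathbb{Q}_3}=\frac23>\frac{11}{30}$. It also gives $u_{L/\mathbb{Q}_3}=\frac52$, which exceeds your intermediate bound $1+e_Kv_p(e)\frac{e}{e-1}=\frac{11}{5}$. Note that $\frac52$ equals the target bound $1+1\cdot\bigl(1+\frac12\bigr)$, so the proposition is sharp here and there is no slack to absorb the error. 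Your chain of inequalities proves a false statement.

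\textbf{What is missing.} You need an independent bound
\[
i_{L/K}\le\frac{e_K}{p-1}+\frac1e,\quad\text{equivalently}\quad \sup_{\sigma\neq1}v_L(\sigma\pi-\pi)\le1+\frac{e_L}{p-1}.
\]
The paper takes this from Serre's \emph{Local Fields} (Ch.~IV, \S2, Ex.~3(c)) and adds it to the different bound. That gives $1+e_K\bigl(v_p(e)+\frac1{p-1}\bigr)$ directly, so your final inequality $v_p(e)/(e-1)\le1/(p-1)$ is not needed.

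You can also prove this bound inside your Newton polygon framework, provided you use all vertices and not just the chord. The largest root valuation of $g(Y)/Y$ is $\max_{k\ge2}\frac{v(c_1)-v(c_k)}{k-1}$. Write $c_k=\sum_{j\ge k}\binom{j}{k}a_j\pi^{j-k}$ with $a_e=1$. Then combine three facts:
\begin{itemize}
\item[(a)] $v_p\binom{j}{k}\ge v_p(j)-v_p(k)$;
\item[(b)] $v_L(c_1)=\min_j v_L(ja_j\pi^{j-1})$, by the same distinct-valuations argument you used for the different;
\item[(c)] $v_p(k)\le\frac{k-1}{p-1}$.
\end{itemize}
Together they give $v_L(c_k)\ge v_L(c_1)-(k-1)\bigl(1+\frac{e_L}{p-1}\bigr)$ for every $k\ge2$, which is precisely the required bound on $i_{L/K}$.
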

\begin{proof}
If $L/K$ is unramified, then the assertion is immediate. Otherwise,
let $K_0$ be the maximal unramified subextension of $L/K$.
Since $K_0/K$ is unramified, we have
$u_{L/K}=u_{L/K_0}$, $e_{K_0}=e_K$, and
$e(L/K_0)=e(L/K)=e$. Thus, replacing $K$ by $K_0$, we may assume
that $L/K$ is totally ramified.

Let $\pi_L$ be a prime element of $L$. Then
$\mcal{O}_L=\mcal{O}_K[\pi_L]$, and hence
\[
i_{L/K}
 =\sup_{\sigma\neq1}v_K(\sigma\pi_L-\pi_L).
\]
By \cite[Chap.~III, Sect.~7, Proposition~13, Remarks~(1)]{Ser68},
we have
\[
v_K(\mfrak{D}_{L/K})
 \leq \frac{e-1}{e}+v_K(e).
\]
By \cite[Chap.~IV, Sect.~2, Exercise~3(c)]{Ser68}, we have
\[
\sup_{\sigma\neq1}v_L(\sigma\pi_L-\pi_L)-1
 \leq \frac{e_L}{p-1},
\]
where $e_L$ is the absolute ramification index of $L$. Therefore,
\[
i_{L/K}
 \leq \frac{e_K}{p-1}+\frac{1}{e}.
\]
By Corollary~\ref{cor:GaloisClosure}, we obtain
\[
u_{L/K}
 \leq \frac{e-1}{e}+v_K(e)
      +\frac{e_K}{p-1}+\frac{1}{e}
 =1+e_K\left(v_p(e)+\frac{1}{p-1}\right).
\]
\end{proof}
\subsection{Extensions of degree \texorpdfstring{$p>2$}{p > 2}}
In this section, we review a classification of 
the isomorphism classes of ramified extensions of degree $p$
over $K$ 
following \cite{Ama71}. 
After that, we prove some results on their Galois closure. 
These results will be applied to fields of definition of $p$-torsion points on elliptic curves. 
Throughout this section, we assume that 
$p$ is an odd prime.
We fix a prime element $\pi_K$ of $K$ and a complete system $\mfrak{R}_K$
of representatives of $\mcal{O}_K$ modulo $\mbf{m}_K$ such that 
$0 \in \mfrak{R}_K$. 
For any $x, y \in \overline{K}$, the congruence $x \equiv y$ means that 
$v_K(x-y)>0$. 
Let $L$ be a ramified extension of degree $p$ over $K$. 
A prime element $\pi_L$ of $L$ satisfies an Eisenstein equation $f(X)=0$, where 
\[
f(X)=X^p-\sum_{i=1}^{p-1}a_iX^i-\pi_Ku \in E_K^p.
\]
Replacing $\pi_L$ by a suitable $k \pi_L$ with $k \in K$, we may assume that $\pi_L^p \equiv N_{L/K} \pi_L \equiv \pi_K$, where $N_{L/K}$ is the norm map. 
This congruence implies $u \equiv 1$. 
Throughout this section, we assume that the Eisenstein polynomial
$f$ and the prime element $\pi_L$ are chosen so that
$\pi_L^p\equiv\pi_K$ and $u\equiv1$. 
For $f(X) \in E_K^p$ such that $f(X)=0$ has a root in $L$, 
we define the {\it type} of $L$ as follows:
\begin{itemize}
\item[(i)] In case $\underset{i}{\min}\, v_K(a_i)=m \leq e_K$, we denote by $\lambda$ 
the least integer satisfying $v_K(a_{\lambda})=m$, and take $\omega \not= 0$ in $\mfrak{R}_K$ 
such that $a_{\lambda} \equiv \omega \pi_K^m$. 
In this case, we say that $f(X)$ is of type $\langle \lambda,m,\omega \rangle$.
\item[(ii)] In case $v_K(a_i) \geq e_K+1$ for $1 \leq i \leq p-1$, we set $\lambda=0$ and 
$m=e_K+1$. 
In this case, we say that $f(X)$ is of type $\langle 0 \rangle$. 
\end{itemize}
The type of $L$ does not depend on the choice of $f(X)$ (\cite[Theorem 1]{Ama71}). 
Hence the type of $L$ is an invariant of the isomorphism class of $L$ over $K$. 
We distinguish the case (i) and (ii) by $\lambda \not=0$ and $\lambda=0$, respectively. 
\begin{proposition}
    \label{prop:BreakDegreeP}
    For any ramified extension $L$ of degree $p$ over $K$, 
    we have the equalities
    \[ v_K(\mfrak{D}_{L/K})=m+\frac{\lambda-1}{p},\quad 
u_{L/K}=m+\frac{m+\lambda-1}{p-1}.\]
    In particular, the inequality $u_{L/K} \geq 1+\frac{1}{p-1}$ holds. 
\end{proposition}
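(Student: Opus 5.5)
We compute both quantities from the Eisenstein polynomial $f(X)=X^p-\sum_{i=1}^{p-1}a_iX^i-\pi_Ku$ with $\pi_L^p\equiv\pi_K$, $u\equiv1$. The plan has three steps: get the different from $f'(\pi_L)$, get $i_{L/K}$ from the Newton polygon of $f(\pi_L+X)$, and then add them using Corollary~\ref{cor:GaloisClosure}.

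\textbf{The different.} Since $\mcal{O}_L=\mcal{O}_K[\pi_L]$, we have
\[
v_L(\mfrak{D}_{L/K})=v_L(f'(\pi_L)),\qquad f'(\pi_L)=p\pi_L^{p-1}-\sum_i i a_i\pi_L^{i-1}.
\]
The first term has $v_L$-valuation $pe_K+p-1$. A term $ia_i\pi_L^{i-1}$ with $1\le i\le p-1$ has $v_L$-valuation $p\,v_K(a_i)+i-1$, because $i$ is a unit. These valuations are pairwise distinct modulo $p$, so the minimum is attained exactly once.
\begin{itemize}
\item In type $\langle\lambda,m,\omega\rangle$ we have $m\le e_K$. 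The minimum is $pm+\lambda-1$: indices $i<\lambda$ have $v_K(a_i)\ge m+1$, and indices $i>\lambda$ have valuation at least $pm+i-1$. This is also below $pe_K+p-1$. Hence $v_K(\mfrak{D}_{L/K})=m+(\lambda-1)/p$.
\item In type $\langle 0\rangle$ every $a_i$-term has valuation at least $p(e_K+1)$, so the minimum is $pe_K+p-1=p(e_K+1)-1$. This equals $pm+\lambda-1$ with $m=e_K+1$ and $\lambda=0$.
\end{itemize}

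\textbf{The lower break $i_{L/K}$.} Set $\sigma\pi_L=\pi_L+\delta$ with $\delta\neq0$. Then $\delta$ is a root of
\[
g(X)=f(\pi_L+X)/X=\sum_{k\ge1}\frac{f^{(k)}(\pi_L)}{k!}X^{k-1}.
\]
The constant term is $f'(\pi_L)$, with $v_L=pm+\lambda-1=:D$. The leading term $X^{p-1}$ has coefficient $1$. The main obstacle is to show that the Newton polygon of $g$ is a single segment from $(0,D)$ to $(p-1,0)$, i.e.\ that every coefficient satisfies $v_L(f^{(k)}(\pi_L)/k!)\ge D(p-k)/(p-1)$. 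These coefficients are $\binom{p}{k}\pi_L^{p-k}$ minus $a_i$-terms. For $2\le k\le p-1$ the binomial term has $v_L\ge pe_K$, and each $a_i$-term has $v_L\ge p\,v_K(a_i)\ge pm$. The required inequality then follows from $D\le pm+p-2$ and $D\le pe_K+p-1$.

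Granting a single segment, all $p-1$ roots have $v_L(\delta)=D/(p-1)$. The roots of $g$ are exactly the $\sigma\pi_L-\pi_L$ over $\sigma\in\Gamma(L/K)\setminus\{1\}$, so
\[
i_{L/K}=\frac{D}{p(p-1)}=\frac{pm+\lambda-1}{p(p-1)}.
\]

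\textbf{The upper break and the bound.} Corollary~\ref{cor:GaloisClosure} gives
\[
u_{L/K}=\Bigl(m+\frac{\lambda-1}{p}\Bigr)+\frac{pm+\lambda-1}{p(p-1)}=m+\frac{m+\lambda-1}{p-1},
\]
after putting everything over $p(p-1)$. For the inequality:
\begin{itemize}
\item In case (i), $m\ge1$ and $\lambda\ge1$, so $u_{L/K}\ge 1+\frac{1}{p-1}$, with equality when $m=\lambda=1$.
\item In case (ii), $m=e_K+1\ge2$, so $u_{L/K}=m+\frac{m-1}{p-1}\ge 2+\frac{1}{p-1}$.
\end{itemize}
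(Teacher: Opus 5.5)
Your computation of the different from $f'(\pi_L)$ is exactly the paper's, and your final step $u_{L/K}=v_K(\mathfrak{D}_{L/K})+i_{L/K}$ is the same combination the paper uses. The real divergence is in showing that all $v_K(\sigma\pi_L-\pi_L)$, $\sigma\neq1$, coincide. The paper simply cites \cite[Lemma 1]{Ama71}, which immediately gives $v_K(\mathfrak{D}_{L/K})=(p-1)i_{L/K}$ and $u_{L/K}=p\,i_{L/K}$. You instead prove it with the Newton polygon of $g$. That route is legitimate, but your verification of the step you call the main obstacle does not go through as written. The bounds $v_L\ge pe_K$ for $\binom{p}{k}\pi_L^{p-k}$ and $v_L\ge pm$ for the $a_i$-terms, combined with $D\le pm+p-2$ and $D\le pe_K+p-1$, do not imply $v_L\bigl(f^{(k)}(\pi_L)/k!\bigr)\ge D\frac{p-k}{p-1}$. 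For instance, over $K=\mathbb{Q}_5$ take $f(X)=X^5-5X^4-5$, of type $\langle 4,1,1\rangle$, so $D=8$. For $k=2$ you need valuation at least $6$, whereas your stated bounds only give $5$.

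The inequality itself is true; you only need to keep the information you discarded. Since $v_p\binom{p}{k}=1$, the binomial term has $v_L$ exactly $pe_K+p-k$, and $D\frac{p-k}{p-1}\le(pe_K+p-1)\frac{p-k}{p-1}\le pe_K+p-k$. The term $\binom{i}{k}a_i\pi_L^{i-k}$ with $k\le i\le p-1$ has $v_L\ge p\,v_K(a_i)+i-k$. In type $\langle0\rangle$ this is at least $pe_K+p>D$. In type $\langle\lambda,m,\omega\rangle$ with $i<\lambda$, you must use $v_K(a_i)\ge m+1$, which gives at least $pm+p>D$. For $i\ge\lambda$ one computes
\[
pm+i-k-D\,\frac{p-k}{p-1}=(i-\lambda)+(k-1)\,\frac{p(m-1)+\lambda}{p-1}\ \ge\ 0 .
\]
With these estimates the Newton polygon is a single segment, and the rest of your argument, including the final formula and the lower bound, is correct.

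A shorter way to get the equal valuations is available. The Galois group of the Galois closure acts faithfully and transitively on the $p$ roots of $f$, so its Sylow $p$-subgroup has order $p$. It is generated by some $\tau$ that permutes the roots cyclically. The identity $\tau^j\pi_L-\pi_L=\sum_{r=0}^{j-1}\tau^r(\tau\pi_L-\pi_L)$ gives $v_K(\tau^j\pi_L-\pi_L)\ge v_K(\tau\pi_L-\pi_L)$. Exchanging the roles of $\tau$ and $\tau^j$ gives the reverse inequality. Hence $i_{L/K}=v_K(\mathfrak{D}_{L/K})/(p-1)$ follows directly from your different computation.
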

\begin{proof}
For any $\sigma \in \Gamma(L/K)$ with $\sigma\not=1$, all the values $v_K(\sigma \pi_L -\pi_L)$ are the same 
by \cite[Lemma 1]{Ama71}. 
Hence we have $v_K(\mathfrak{D}_{L/K})=(p-1)i_{L/K}$ and $u_{L/K}=p \cdot i_{L/K}$. 
Since $f'(\pi_L)=p\pi_L^{p-1}-\sum_{i=1}^{p-1} i a_i \pi_L^{i-1}$, 
we have 
\[
v_K(\mfrak{D}_{L/K})=\min\, 
\{ v_K(p\pi_L^{p-1}),\dots,v_K(ia_i\pi_L^{i-1}),\dots, v_K(a_1)\}=m+\frac{\lambda-1}{p}.
\]
The result follows.
\end{proof}
\begin{proposition}[\cite{Ama71}, Theorem 4]
    \label{prop:CanonicalPolynomial}
    Let $L$ be a ramified extension of degree $p$ over $K$. 
    Then there 
    exists a prime element of $L$ which 
    satisfies an Eisenstein equation of the following form:
    \begin{itemize}
        \item[{\rm (i)}] $X^p-\omega \pi_K^mX^{\lambda}-\pi_Ku=0$, if $L$ 
        is of type $\langle \lambda,m,\omega \rangle$,
        \item[{\rm (ii)}] $X^p-\pi_Ku=0$, if $L$ is of type $\langle 0 \rangle$.
    \end{itemize}
\end{proposition}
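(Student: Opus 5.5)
The plan is to start from the Eisenstein equation $f(X)=X^p-\sum_{i=1}^{p-1}a_iX^i-\pi_Ku$ of $\pi_L$, normalized as above so that $\pi_L^p\equiv\pi_K$ and $u\equiv 1$. We then modify the prime element step by step, $\pi_L\mapsto \pi_L'=\pi_L+c\pi_L^{k}$ with $c\in\mcal{O}_K$ and $k\ge 2$, or $\pi_L\mapsto \pi_L(1+c\pi_L^{k})$. Each substitution changes the Eisenstein polynomial, and the goal is to kill, one at a time, all the terms $a_iX^i$ with $i\neq\lambda$ together with the higher-order part of $a_\lambda$. This reduction is carried out in the following order.

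First, in type $\langle\lambda,m,\omega\rangle$, we use Proposition \ref{prop:BreakDegreeP}. By definition every coefficient satisfies $v_K(a_i)\ge m$, with strict inequality for $i<\lambda$. A term $a_i\pi_L^i$ contributes to $f$ with $L$-valuation $p\,v_K(a_i)+i$. Hence the terms with $p\,v_K(a_i)+i>pm+\lambda$, and likewise those with $p\,v_K(a_i)+i$ sufficiently large, can be moved into the constant term $\pi_Ku$ or into higher-order corrections. The substitution $\pi_L'=\pi_L+c\pi_L^{k}$ changes $\pi_L^p$ by $p c\pi_L^{p+k-1}+\cdots+c^p\pi_L^{pk}$. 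It also changes the term $\omega\pi_K^m\pi_L^\lambda$ by $\lambda\omega\pi_K^m c\,\pi_L^{\lambda+k-1}+\cdots$. The key point is that the derivative term $f'(\pi_L)$ has exact valuation $v_K(\mfrak{D}_{L/K})=m+(\lambda-1)/p$ (Proposition \ref{prop:BreakDegreeP}). Therefore an error term of $L$-valuation $N$ can be absorbed by a correction of $L$-valuation about $N-(pm+\lambda-1)$, as in a Newton/Hensel step. This works whenever $N>pm+\lambda$, and more generally whenever $N$ exceeds $p\,u_{L/K}$-type thresholds.

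Second, we deal with the remaining terms $a_i\pi_L^i$ whose valuation $p\,v_K(a_i)+i$ is comparable to $pm+\lambda$. For these we write $a_i\pi_L^{i}$ as a unit multiple of $\pi_K^{m}\pi_L^{\lambda}\cdot\pi_L^{j}$ and use the same substitution to transfer them into the $X^\lambda$ coefficient or the constant term, adjusting $\omega$ and $u$ by units. The congruence $a_\lambda\equiv\omega\pi_K^m$ only fixes $a_\lambda$ modulo higher terms. Those higher terms, $a_\lambda-\omega\pi_K^m$, are absorbed into the unit $u$ (giving $\pi_K u$) by rescaling $\pi_L$ by a $1$-unit of $K$ and applying Proposition \ref{prop:ultrametric}. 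In type $\langle 0\rangle$ all $v_K(a_i)\ge e_K+1$, so $p\,v_K(a_i)+i>pe_K+p=v_L(p\pi_L^p)$. Every linear term can then be absorbed in the same way, which leaves $X^p-\pi_Ku$.

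Finally, we make the reduction rigorous with Proposition \ref{prop:criteria} rather than with exact substitutions. Let $g=X^p-\omega\pi_K^mX^\lambda-\pi_Ku'$ with a suitable unit $u'$. By Proposition \ref{prop:ultrametric}, $v_K(f,g)=\min_i\{v_K(a_i-b_i)+i/p\}$. After the preliminary substitutions, all discrepancies have $v_K(a_i-b_i)+i/p>m+(m+\lambda-1)/(p-1)=u_{L/K}$. Proposition \ref{prop:criteria} then gives $K[X]/(g)\simeq L$, and so a root of $g$ is a prime element of $L$. I expect the main obstacle to be the intermediate range. In that range $v_K(a_i)+i/p$ lies between $m+\lambda/p$ and $u_{L/K}$, and simple truncation does not suffice. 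The first thing I would try there is the binomial substitution $\pi_L\mapsto\pi_L+c\pi_L^{k}$, choosing $k$ so that the cross term $\lambda\omega\pi_K^mc\pi_L^{\lambda+k-1}$ cancels the offending term. This should be possible because $\lambda\not\equiv0\pmod p$; that holds since $\lambda<p$, or else the term $p\pi_L^{p-1}$ dominates. The point to check is that each such step strictly increases the minimal valuation of the discrepancies. Iterating until the discrepancies exceed $u_{L/K}$ then finishes the argument.
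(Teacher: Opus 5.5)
The paper gives no proof of this statement; it is quoted from Amano. Your closing step is sound. Once every coefficient other than the constant and $X^\lambda$ ones satisfies $v_K(a_i)+i/p>u_{L/K}$, and also $v_K(a_\lambda-\omega\pi_K^m)+\lambda/p>u_{L/K}$, Propositions~\ref{prop:ultrametric} and \ref{prop:criteria} give $K[X]/(g)\simeq L$, taking $u'$ to be the current constant term divided by $\pi_K$. But the whole content of the theorem is getting to that situation, and you leave it as ``the point to check''. As you set it up, that point fails.

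Let $\pi_L'=\pi_L+\delta$ with $\delta=c\pi_L^k$ and $t=v_L(\delta)$, chosen so that the cross term $-\lambda a_\lambda\pi_L^{\lambda-1}\delta$ cancels an offending term of level $N=p\,v_K(a_i)+i$. Then $t=N-(pm+\lambda-1)$. By Proposition~\ref{prop:BreakDegreeP}, $u_{L/K}=(pm+\lambda-1)/(p-1)$. Every level you must treat satisfies $N<p\,u_{L/K}$; equality is impossible because $p\nmid N$. This inequality is equivalent to $pt<N$. So the term $\delta^p=c^p\pi_L^{pk}$, which you list but never analyse, always lies strictly below the term being removed. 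Your claim that each step strictly increases the minimal valuation of the discrepancies is therefore false as stated.

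Two ingredients are needed to rescue the iteration, and the proposal has neither.
\begin{itemize}
\item[(a)] \emph{A rule for reading off the new polynomial.} Write $f(\pi_L')=\sum_{j=0}^{p-1}d_j\pi_L'^{\,j}$ with $d_j\in K$. Then $f-\sum_jd_jX^j$ is the Eisenstein polynomial of $\pi_L'$. Moreover $v_L\bigl(\sum_jd_j\pi_L'^{\,j}\bigr)=\min_j\bigl(p\,v_K(d_j)+j\bigr)$, because the exponents are distinct mod $p$. This, not a Newton/Hensel step, turns valuations in $L$ into levels of individual coefficients.
\item[(b)] \emph{The $\delta^p$ term only moves the constant.} Write $\delta^p=c^p(\pi_Ku)^k(1+y)^k$ with $y=(\pi_Ku)^{-1}\sum_ia_i\pi_L^i$, so $v_L(y)\geq pm+\lambda-p$. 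Hence $\delta^p$ lies in $K$ up to an error of valuation at least $N+(p-1)(t-1)>N$, and it only perturbs the free constant term.
\end{itemize}
You also need two further bounds. The Taylor terms $\frac{f^{(j)}(\pi_L)}{j!}\delta^j$ with $2\leq j\leq p-1$ have valuation at least $N+(j-1)(t-1)$. The remaining parts of $f'(\pi_L)\delta$ have valuation at least $N+1$. With these, the argument becomes a finite induction on the integer level $N<p\,u_{L/K}$. Type $\langle0\rangle$ works the same way, with $p\pi_L^{p-1}\delta$ as the cross term.

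Two smaller corrections.
\begin{itemize}
\item[(i)] A term $a_iX^i$ with $i\neq\lambda$ cannot be ``transferred into the $X^\lambda$ coefficient'', since levels with different residues mod $p$ never interact. It must be cancelled by a cross term with $\lambda+k-1\equiv i\pmod p$.
\item[(ii)] The $X^\lambda$ coefficient is adjusted by $\pi_L\mapsto c\pi_L$ with $c$ a principal unit of $K$. This multiplies $a_\lambda$ by $c^{p-\lambda}$ and the other $a_i$ by principal units, so their levels do not change. The equation $c^{p-\lambda}=\omega\pi_K^m/a_\lambda$ is solvable because $p\nmid p-\lambda$. Proposition~\ref{prop:ultrametric} plays no role in this step.
\end{itemize}
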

\begin{proposition}[\cite{Ama71}, Theorem 2 and 3]\label{prop:GaloisCriteria}
    Let $L$ be a ramified extension of degree $p$ over $K$ of 
    type $\langle \lambda,m,\omega \rangle$. 
    Then $L/K$ is a cyclic extension if and only if $m+\lambda-1 \equiv 0 \pmod{p-1}$ and 
    $K$ contains an element $s$ such that $s^{p-1}=\lambda \omega$. 
    If $L/K$ is not a Galois extension, then the Galois closure $\widetilde{L}$ of $L/K$ is given by $K(\pi_L,\gamma)$, where $\gamma$ is an element of $\overline{K}$ 
    such that $\gamma^{p-1}=\lambda \omega \pi_K^{m+\lambda-1}$. 
\end{proposition}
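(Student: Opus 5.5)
We work with the canonical Eisenstein polynomial from Proposition~\ref{prop:CanonicalPolynomial}. Its root $\pi_L$ satisfies $\pi_L^p=\omega\pi_K^m\pi_L^{\lambda}+\pi_Ku$ with $u\equiv1$, so that $\pi_L^p\equiv\pi_K$ up to a unit $\equiv1$. The whole argument rests on one computation: locating the other roots of this polynomial relative to $\pi_L$.

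\textbf{Step 1: the equation for the ratio.} For a root $\sigma\pi_L\neq\pi_L$, write $\sigma\pi_L=\pi_L(1+\rho)$. Substituting into the Eisenstein equation and subtracting the equation for $\pi_L$ gives
\[
\pi_L^p\bigl((1+\rho)^p-1\bigr)=\omega\pi_K^m\pi_L^{\lambda}\bigl((1+\rho)^{\lambda}-1\bigr).
\]
Dividing by $\pi_L^p\rho$, this becomes
\[
\rho^{p-1}+\sum_{k=1}^{p-1}\binom{p}{k}\rho^{k-1}=\lambda\omega\pi_K^m\pi_L^{\lambda-p}\bigl(1+O(\rho)\bigr).
\]
We compare valuations of the terms. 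The middle terms have $v_K\ge e_K$. The right-hand side has valuation $m+(\lambda-p)/p<m\le e_K$. Hence the only terms that can dominate are $\rho^{p-1}$ and the right-hand side, and we get
\[
\rho^{p-1}=\lambda\omega\pi_K^m\pi_L^{\lambda-p}\,\varepsilon,\qquad \varepsilon\equiv1 .
\]
Making this precise, i.e.\ checking that $\varepsilon-1$ has strictly positive valuation, is the main technical step. We check it with the Newton polygon of the polynomial in $\rho$, consistent with Proposition~\ref{prop:BreakDegreeP}, which gives $v_K(\rho)=i_{L/K}-1/p$. The borderline case $m=e_K$ needs the most care, since the term $p\rho$ is then closest to dominating.

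\textbf{Step 2: the element $\gamma$.} Put $\gamma_\rho:=\rho\pi_L^{\lambda}$. Using $\pi_L^{p}=\pi_K\cdot(\text{unit}\equiv1)$, we obtain
\[
\gamma_\rho^{p-1}=\lambda\omega\pi_K^m\,\pi_L^{\lambda-p+\lambda(p-1)}\varepsilon=\lambda\omega\pi_K^{m+\lambda-1}\varepsilon',\qquad \varepsilon'\equiv1 .
\]
Since $p\nmid p-1$, Hensel's lemma shows that $\varepsilon'$ has a $(p-1)$-th root in $K(\pi_L,\rho)$. Therefore $K(\pi_L,\rho)=K(\pi_L,\gamma)$ for an exact root $\gamma$ of $\gamma^{p-1}=\lambda\omega\pi_K^{m+\lambda-1}$.

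Conversely, the $p-1$ nontrivial conjugates give $p-1$ values of $\rho$. These correspond to the $p-1$ roots $\zeta\gamma$ with $\zeta\in\mu_{p-1}\subset K$, so each $\sigma\pi_L=\pi_L(1+\rho)$ lies in $K(\pi_L,\gamma)$. Since $K(\pi_L,\gamma)$ is generated by $\pi_L$ and one conjugate ratio, we conclude that $\widetilde{L}=K(\pi_L,\gamma)$.

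\textbf{Step 3: the cyclicity criterion.} $L/K$ is Galois exactly when $\widetilde L=L$, i.e.\ when $\gamma\in L$. The extension $K(\gamma)/K$ has degree dividing $p-1$, while $[L:K]=p$ is prime, so $L$ has no proper intermediate subfield. Hence $\gamma\in L$ if and only if $\gamma\in K$.

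Now $\gamma\in K$ forces $v_K(\gamma)=(m+\lambda-1)/(p-1)\in\mathbb{Z}$. In that case $\gamma=s\,\pi_K^{(m+\lambda-1)/(p-1)}$ with $s\in K$ and $s^{p-1}=\lambda\omega$. Conversely, if both conditions hold, this formula defines a root $\gamma\in K$.

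A Galois extension of prime degree $p$ is cyclic. This gives the criterion, and in the non-Galois case Step 2 gives the stated Galois closure.
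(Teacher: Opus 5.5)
The paper does not prove this proposition; it quotes Amano's Theorems~2 and~3. So your argument has to stand on its own. Steps~1 and~3 are sound. The valuation comparison in Step~1 is correct, and $m=e_K$ needs no special care, since $m-1+\lambda/p<e_K$ holds uniformly. Step~3 correctly reduces everything to the question of whether $\gamma\in K$.

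\textbf{The gap is in Step~2}, in the inclusion $\widetilde{L}\subseteq K(\pi_L,\gamma)$. Hensel's lemma for $X^{p-1}-\varepsilon'$ inside $K(\pi_L,\rho)$ only produces $\gamma\in K(\pi_L,\rho)$, i.e.\ $K(\pi_L,\gamma)\subseteq K(\pi_L,\rho)$. So your ``therefore'' equality does not follow. In the converse paragraph, knowing that each $\rho_j\pi_L^{\lambda}$ agrees with some $\zeta\gamma$ to leading order does not show that $\rho_j$ lies in $K(\pi_L,\gamma)$. It also does not show, as stated, that distinct $\rho_j$ give distinct $\zeta$. This inclusion is load-bearing in two places:
\begin{itemize}
\item the ``if'' half of the cyclicity criterion, where, once $\gamma\in K$, you must show that every conjugate of $\pi_L$ already lies in $L$;
\item the equality $\widetilde{L}=K(\pi_L,\gamma)$.
\end{itemize}

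\textbf{How to close it.} Apply Hensel over $F:=K(\pi_L,\gamma)$ to the ratio polynomial itself, not to $\varepsilon'$.
\begin{itemize}
\item Let $g(\rho)\in L[\rho]$ be the monic degree-$(p-1)$ polynomial from Step~1 (left side minus right side), whose roots are the $p-1$ ratios $\rho_j$. Put $\beta=\gamma\pi_L^{-\lambda}\in F$ and $h(t)=\beta^{1-p}g(\beta t)$.
\item Write $\pi_L^p=\pi_K u'$ with $u'\equiv 1$. Then exactly $\beta^{1-p}\lambda\omega\pi_K^{m}\pi_L^{\lambda-p}=u'^{\lambda-1}$.
\item The terms $\binom{p}{k}\beta^{k-p}$ with $1\le k\le p-1$ have valuation at least $e_K-(m-1+\lambda/p)>0$.
\item The remaining terms carry positive powers of $\beta$, and $v_K(\beta)>0$.
\item Hence $h\in\mathcal{O}_F[t]$ and $h(t)\equiv t^{p-1}-1 \pmod{\mathbf{m}_F}$.
\end{itemize}
Since $t^{p-1}-1$ has $p-1$ distinct roots in $\mathbb{F}_p$, Hensel's lemma gives $p-1$ distinct roots $t_j$ of $h$ in $\mathcal{O}_F$. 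These are all the roots of $h$, so every $\rho_j=\beta t_j$ lies in $F$. Combined with your $\gamma\in K(\pi_L,\rho_1)$, this gives $\widetilde{L}=L(\rho_1,\dots,\rho_{p-1})=K(\pi_L,\gamma)$. Step~3 then yields both assertions.
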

We define $\mfrak{M}(\lambda,m)$ to be the set of all units in 
$\mathcal{O}_K$ of the form 
\[
1+\sum_i \alpha_i \pi_K^i, \quad \alpha_i \in \mfrak{R}_K,
\]
where the summation is taken over all integers $i$ such that 
\[
1 \leq i \leq m-1 + \left\lfloor \frac{m+\lambda-1}{p-1} \right\rfloor \quad {\rm and} \quad  
i \not\equiv -\lambda \pmod{p}.
\]
\begin{proposition}[\cite{Ama71}, Theorem 7]
    \label{prop:ClassificationDegreeP}
    Each isomorphism class of ramified non-Galois extensions 
    of degree $p$ over $K$ of type 
    $\langle \lambda,m,\omega \rangle$ is defined by the roots of exactly one equation of the form
    \[
    X^p-\omega \pi_K^mX^{\lambda}-\pi_Ku=0 \quad {\rm with} \quad u \in \mfrak{M}(\lambda,m).
    \]
    Thus we obtain a one-to-one correspondence between the set $\mfrak{M}(\lambda,m)$ and 
    the set of isomorphism classes of type $\langle \lambda,m,\omega \rangle$. 
\end{proposition}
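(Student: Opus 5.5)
The statement is Amano's classification (\cite{Ama71}, Theorem 7), so my plan is to reprove it directly. The argument has three parts: reduce to a normal form, show that every class is represented by a polynomial with $u\in\mfrak{M}(\lambda,m)$, and show that this representative is unique.

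\emph{Normal form.} Let $L/K$ be non-Galois and of type $\langle\lambda,m,\omega\rangle$. By Proposition~\ref{prop:CanonicalPolynomial}, $L$ is generated by a root of
\[
X^p-\omega\pi_K^mX^\lambda-\pi_Ku
\]
for some unit $u\equiv 1$. It remains to normalize $u$.

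\emph{Allowed changes of $u$.} I would describe how $u$ changes when $\pi_L$ is replaced by another prime element $\pi_L'=\pi_L\varepsilon$, with $\varepsilon=1+\sum_j c_j\pi_L^j$, that satisfies an equation of the same shape.

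Expanding $\pi_L'^p$ and $\pi_L'^\lambda$ and using $\pi_L^p=\omega\pi_K^m\pi_L^\lambda+\pi_Ku$, the coefficient $u$ changes by two kinds of terms:
\begin{itemize}
\item terms of the form $p c_j\pi_L^j+\cdots$;
\item terms of the form $(\pi_L^\lambda\omega\pi_K^m)\bigl((1+c_j\pi_L^j)^\lambda-(1+c_j\pi_L^j)^p\bigr)$, which are dominated by $\lambda c_j\omega\pi_K^m\pi_L^{\lambda+j}$.
\end{itemize}
Measuring $v_K$, a correction $\pi_L^{\lambda+j}\pi_K^{m}$ sits at level $m+(\lambda+j)/p$ relative to $\pi_K$. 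Such a correction can be pushed into $K$, changing $u$ by a multiple of $\pi_K^i$, precisely when $\lambda+j\equiv 0\pmod p$, that is, $i\equiv-\lambda\pmod p$, with $i=m-1+(\lambda+j)/p$. The other digits, those with $i\not\equiv-\lambda\pmod p$, cannot be changed.

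\emph{Existence.} Above level $m-1+\lfloor(m+\lambda-1)/(p-1)\rfloor$, any change of $u$ preserves the isomorphism class. I would justify this with Proposition~\ref{prop:criteria}:
\begin{itemize}
\item By Proposition~\ref{prop:ultrametric}, changing $u$ in $\pi_K^{i}$ changes $v_K(f,g)$ to $i+1$.
\item By Proposition~\ref{prop:BreakDegreeP}, $u_{L/K}=m+\frac{m+\lambda-1}{p-1}$.
\end{itemize}
So whenever $i+1>u_{L/K}$, the extensions are isomorphic. This bounds the range of $i$ to $i\le m-1+\lfloor(m+\lambda-1)/(p-1)\rfloor$. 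Combining this truncation with the elimination of the digits $i\equiv-\lambda\pmod p$, one can successively adjust $u$ digit by digit, from low $i$ to high $i$, into $\mfrak{M}(\lambda,m)$.

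\emph{Uniqueness (main obstacle).} Suppose two polynomials with $u,u'\in\mfrak{M}(\lambda,m)$ define isomorphic fields. Then some prime element $\pi_L'=\pi_L\varepsilon$ of $L$ realizes the second equation. I would compare the expansions of $N_{L/K}$ or of the equations, digit by digit, and show that the first differing digit of $u$ and $u'$ would have to be of the form $i\equiv-\lambda\pmod p$, which is excluded from $\mfrak{M}(\lambda,m)$.

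The non-Galois hypothesis enters here. By Proposition~\ref{prop:GaloisCriteria}, either $m+\lambda-1\not\equiv0\pmod{p-1}$, or $\lambda\omega$ is not a $(p-1)$-th power in $K$. This rules out the extra automorphism-type adjustments: the only $\varepsilon$ changing $u$ at low levels come from $K$-conjugates, and there are none besides the identity. Without this hypothesis, the equation $\lambda\omega s^{p-1}\equiv\ldots$ would allow a further shift of $u$ at level $m-1+\frac{m+\lambda-1}{p-1}$.

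Carefully tracking the cross terms, such as binomial terms involving $p$ when $e_K$ is small relative to $m$, is the delicate bookkeeping I expect to dominate the argument. Since the result is exactly \cite[Theorem 7]{Ama71}, for the paper I would simply cite it after this sketch.
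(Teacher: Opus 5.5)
The paper gives no argument for this statement. It simply quotes \cite{Ama71}, Theorem 7, so your closing decision to cite it is exactly what the paper does. The sketch you offer, however, would not prove the statement, because its central step attaches the change to the wrong digit of $u$. With $\varepsilon=1+c\pi_L^j$ one has
\[
(\pi_L')^p-\omega\pi_K^m(\pi_L')^\lambda=\pi_Ku\,\varepsilon^p+\omega\pi_K^m\pi_L^\lambda\,(\varepsilon^p-\varepsilon^\lambda).
\]
For $j<\frac{p(m-1)+\lambda}{p-1}$, the lowest-order change is the Frobenius term $\pi_Ku\,c^p\pi_L^{pj}\approx c^p\pi_K^{j+1}$, which you leave inside the ``$\cdots$'' of your first bullet. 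This term shifts digit $j$ of $u$ by $c^p$.

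The $\lambda$-term $-\lambda c\omega\pi_K^m\pi_L^{\lambda+j}$ has $v_K=m+(\lambda+j)/p$. In this range that value is both larger than $j+1$ and smaller than $u_{L/K}$. If $\lambda+j\not\equiv0\pmod p$, the term lands in a non-constant slot $X^r$ with $r\equiv\lambda+j$. The canonical shape forbids this, and when $j$ is the first exponent with $c\neq0$ nothing else can cancel it. Hence digit $j$ is removable exactly when $j\equiv-\lambda\pmod p$, and in that case the $\lambda$-term only moves the deeper digit $m-1+(\lambda+j)/p$.

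You instead assign the removable digit to $i=m-1+(\lambda+j)/p$ and then write ``that is, $i\equiv-\lambda$'', which does not follow. Take $p=5$, $\lambda=2$, $m=4$ (so $e_K\geq4$). The only admissible $j$ is $3$, which gives $i=4\not\equiv-2\pmod 5$. Your rule therefore makes digit $4$ removable and digit $3$ invariant, the reverse of $\mfrak{M}(2,4)$, which keeps digits $1,2,4$. Your low-to-high normalization also fails under your rule: the substitution you use for digit $i$ first changes the lower digit $j$ by $c^p$.

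The non-Galois hypothesis is also misplaced. It is an input to existence, not uniqueness. When $\frac{m+\lambda-1}{p-1}\in\mbb{Z}$, the top digit $k=m-1+\frac{m+\lambda-1}{p-1}$ satisfies $k\equiv-\lambda\pmod p$, so it is excluded from $\mfrak{M}(\lambda,m)$. Yet Proposition~\ref{prop:criteria} cannot discard it, because $k+1=u_{L/K}$. At $j=k$ the Frobenius term and the $\lambda$-term land on the same digit, which moves by $\theta^p-\lambda\omega\theta$ on the residue field. This additive map is bijective exactly when $\lambda\omega$ is not a $(p-1)$-st power, that is, by Proposition~\ref{prop:GaloisCriteria}, exactly when $L/K$ is not Galois. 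In the Galois case the map has a kernel (these are the automorphisms, which never change $u$) and is not surjective. So one loses an adjustment rather than gaining ``a further shift''. Likewise, $K$-conjugates are roots of the same polynomial and cannot be the $\varepsilon$ that change $u$.

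With the corrected mechanism, uniqueness follows without any Galois hypothesis by looking at the first non-zero term $c\pi_L^j$ of $\varepsilon-1$. If $j<\frac{p(m-1)+\lambda}{p-1}$, it must have $j\equiv-\lambda$; it then changes digit $j$, which is $0$ in both $u$ and $u'$, by $c^p\neq0$, a contradiction. If $j\geq\frac{p(m-1)+\lambda}{p-1}$, it cannot touch any digit recorded in $\mfrak{M}(\lambda,m)$.

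Your truncation step via Propositions~\ref{prop:ultrametric}, \ref{prop:BreakDegreeP} and \ref{prop:criteria} is correct. The cross-term bookkeeping you defer still has to be done. This includes the $p$-binomial terms and the non-$K$ part of $\pi_L^{pj}$, which land in non-constant slots at deeper levels and must be removed by substitutions of larger exponent.
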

\begin{corollary}
\label{cor:ClassificationLambda}
Let $F$ be a finite unramified extension of $\mbb{Q}_p$, and fix an integer 
$\lambda$ with $1\leq\lambda\leq p-2$. Every degree-$p$ ramified extension 
$L/F$ of type $\langle\lambda,1,\omega\rangle$ is defined by the roots 
of exactly one equation of the form
    \[
    X^p-\omega\pi_FX^{\lambda}-\pi_F=0 \quad {\rm with} \quad
    \omega\in\mfrak{R}_F\setminus\{0\}.
    \]
    Thus, as $\omega$ ranges over $\mfrak{R}_F\setminus\{0\}$, these 
    equations are in one-to-one correspondence with the $F$-isomorphism 
    classes of such extensions.
\end{corollary}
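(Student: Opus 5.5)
The plan is to specialize Proposition~\ref{prop:ClassificationDegreeP} to $K=F$, with $e_F=1$ and the uniformizer $\pi_F=p$ (any fixed prime element works). The key observations are that the set $\mfrak{M}(\lambda,1)$ collapses to $\{1\}$, and that the non-Galois hypothesis of that proposition holds automatically.

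First, I check that every degree-$p$ ramified extension $L/F$ of type $\langle\lambda,1,\omega\rangle$ with $1\le\lambda\le p-2$ is non-Galois. By Proposition~\ref{prop:GaloisCriteria}, a cyclic $L/F$ requires
\[
m+\lambda-1=\lambda\equiv 0 \pmod{p-1}.
\]
This is impossible for $1\le\lambda\le p-2$, so $L/F$ is not Galois and Proposition~\ref{prop:ClassificationDegreeP} applies. Next, I compute $\mfrak{M}(\lambda,1)$. The index range is
\[
1\le i\le m-1+\left\lfloor\frac{m+\lambda-1}{p-1}\right\rfloor=\left\lfloor\frac{\lambda}{p-1}\right\rfloor=0,
\]
which is empty, so $\mfrak{M}(\lambda,1)=\{1\}$. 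Hence each isomorphism class of type $\langle\lambda,1,\omega\rangle$ is defined by the single equation
\[
X^p-\omega\pi_FX^\lambda-\pi_F=0,
\]
and this gives existence and uniqueness for a fixed $\omega$.

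Finally, I verify the bijection with $\omega\in\mfrak{R}_F\setminus\{0\}$. For surjectivity, I check that for each such $\omega$ the polynomial is Eisenstein, since $v_F(\omega\pi_F)=1$ and the constant term is $-\pi_F$. It is normalized with $u=1$, and its only middle coefficient is $a_\lambda=\omega\pi_F$. Because $m=1\le e_F$, it is of type $\langle\lambda,1,\omega\rangle$ by definition, so every $\omega$ actually occurs. For injectivity, the type is an invariant of the $F$-isomorphism class by \cite[Theorem 1]{Ama71}. Distinct $\omega$ therefore give distinct types, hence non-isomorphic extensions.

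The only point needing care is this last step. Since $\omega$ is defined only modulo $\mbf{m}_F$, distinct representatives in $\mfrak{R}_F$ must give distinct types. This holds because $\mfrak{R}_F$ is a complete system of residue representatives and the type records $\omega$ as such a representative.
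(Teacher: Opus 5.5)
Your proposal is correct and follows the paper's proof, which is exactly the specialization of Proposition~\ref{prop:ClassificationDegreeP} to $m=1$, using $\mfrak{M}(\lambda,1)=\{1\}$. You also fill in details that the paper's one-line proof leaves implicit: the check via Proposition~\ref{prop:GaloisCriteria} that $\lambda\not\equiv 0\pmod{p-1}$ forces $L/F$ to be non-Galois (the hypothesis under which Proposition~\ref{prop:ClassificationDegreeP} applies), and the verification that every $\omega$ occurs and that distinct $\omega$ give distinct types.
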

\begin{proof}
Since $F$ is unramified, we have $m=1$. Hence 
$\mfrak{M}(\lambda,1)=\{1\}$, and the assertion follows from 
Proposition~\ref{prop:ClassificationDegreeP}.
\end{proof}
\begin{proposition}\label{prop:breaklambda}
Let $F$ be a finite unramified extension of $\mbb{Q}_p$ and 
$L$ a degree $p$ ramified extension of $F$. 
Suppose $L$ is of type $\langle \lambda,m,\omega \rangle$ and 
the Galois closure $\widetilde{L}$ of $L/F$ equals $L(\mu_p)$, 
where $\mu_p$ is the group of $p$-th roots of unity. 
Then we have 
\[ u_{\widetilde{L}/F}=\frac{p-1+\lambda}{p-1},\quad v_p(\mfrak{D}_{\widetilde{L}/F})=\frac{p^2+(\lambda-1)p-\lambda-1}
{p(p-1)}.\]
\end{proposition}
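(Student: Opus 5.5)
Since $F/\mbb{Q}_p$ is unramified, we have $e_F=1$. An extension of type $\langle\lambda,m,\omega\rangle$ therefore has $1\le m\le e_F=1$, so $m=1$ and $1\le\lambda\le p-1$. The plan is to get the upper break from the degree-$p$ formula, transport it to the Galois closure, and then integrate along the upper filtration to get the different.

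\emph{Step 1 (the break).} Proposition~\ref{prop:BreakDegreeP} with $m=1$ gives
\[
u_{L/F}=1+\frac{\lambda}{p-1}=\frac{p-1+\lambda}{p-1}.
\]
Corollary~\ref{cor:GaloisClosure} then gives $u_{\widetilde{L}/F}=u_{L/F}$, which is the first formula.

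\emph{Step 2 (structure of the Galois closure).} Put $G=\mrm{Gal}(\widetilde{L}/F)$ and $\widetilde{L}=L(\mu_p)$. Since $F(\mu_p)/F$ is totally ramified of degree $p-1$ and $L/F$ is totally ramified of degree $p$, coprimality of the degrees gives $[\widetilde{L}:F]=p(p-1)$, and $\widetilde{L}/F$ is totally ramified. Let $H=\mrm{Gal}(\widetilde{L}/F(\mu_p))$, of order $p$. Then $H$ is the wild inertia subgroup, because $G/H\simeq\mrm{Gal}(F(\mu_p)/F)$ is tame of order $p-1$.

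Using the proof of Proposition~\ref{prop:break}, the shifted upper filtration is determined as follows:
\begin{itemize}
\item $G^{(x)}=G$ for $0<x\le 1$;
\item $G^{(x)}=H$ for $1<x\le u$, where $u=u_{\widetilde{L}/F}$;
\item $G^{(x)}=1$ for $x>u$.
\end{itemize}
The middle range needs a justification: $H$ has prime order, so $G^{(x)}$ is either $H$ or trivial for $x>1$. It stays equal to $H$ up to the maximal break $u$ by definition of $u$, and it is trivial beyond $u$.

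\emph{Step 3 (the different).} Use the standard formula
\[
v_F(\mfrak{D}_{\widetilde{L}/F})=\int_{-1}^{\infty}\left(1-\frac{1}{|G^{v}|}\right)dv.
\]
This follows from Hilbert's formula $v_{\widetilde L}(\mfrak D)=\sum_{i\ge0}(|G_i|-1)$ after the change of variables $v=\varphi(i)$. In the shifted numbering it becomes
\[
v_F(\mfrak{D}_{\widetilde{L}/F})=\int_0^{\infty}\left(1-\frac{1}{|G^{(x)}|}\right)dx .
\]
Since $v_p=v_F$ on $F$, substituting Step 2 gives
\[
v_p(\mfrak{D}_{\widetilde{L}/F})=\Bigl(1-\tfrac{1}{p(p-1)}\Bigr)+\tfrac{\lambda}{p-1}\Bigl(1-\tfrac1p\Bigr)
=\frac{p^2+(\lambda-1)p-\lambda-1}{p(p-1)}.
\]

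As a cross-check, one can instead compute the different in the tower $F\subset F(\mu_p)\subset\widetilde{L}$. Here $v_p(\mfrak{D}_{F(\mu_p)/F})=\frac{p-2}{p-1}$. The lower break of the cyclic degree-$p$ extension $\widetilde{L}/F(\mu_p)$ comes from $u$ via $\psi_{F(\mu_p)/F}$, which multiplies by $p-1$ above $1$. Both routes should give the same value.

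The only delicate point is Step 2, namely identifying the jumps of the filtration exactly. This relies on the facts that the tame quotient $G/H$ contributes only at $x\le1$ and that $H$ has prime order, so there is exactly one wild jump, located at $u$.
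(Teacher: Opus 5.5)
Your proof is correct, but it reaches the different by a different route from the paper's.

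\textbf{The paper's route.} The paper gets $u_{\widetilde{L}/F}=u_{L/F}$ from Proposition~\ref{prop:composite}, which is equivalent to your use of Corollary~\ref{cor:GaloisClosure}. It then computes the maximal \emph{lower} break by hand:
\begin{itemize}
\item it takes the explicit uniformizer $\pi_{\widetilde{L}}=(\zeta_p-1)/\pi_L$ of $\widetilde{L}$;
\item it identifies $\sup_{1\neq\sigma\in W}v_p(\sigma\pi_L-\pi_L)$ with $i_{L/F}=\frac{\lambda}{p-1}-\frac{\lambda-1}{p}$, using Proposition~\ref{prop:BreakDegreeP} and Corollary~\ref{cor:GaloisClosure};
\item it deduces $i_{\widetilde{L}/F}=\frac{1}{p-1}+i_{L/F}-\frac{2}{p}=\frac{\lambda+1}{p(p-1)}$;
\item it reads off $v_p(\mfrak{D}_{\widetilde{L}/F})=u_{\widetilde{L}/F}-i_{\widetilde{L}/F}$ from Corollary~\ref{cor:GaloisClosure}.
\end{itemize}

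\textbf{Your route.} You never use a uniformizer of $\widetilde{L}$. The group structure alone (totally ramified of degree $p(p-1)$, wild inertia of prime order) forces the upper filtration to have a single wild jump, located at $u$. Integrating $1-1/|G^{(x)}|$, i.e.\ Hilbert's formula rewritten in upper numbering, then gives the different.

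\textbf{Comparison.} Your argument is more structural. It shows that the different depends only on $u$ and the group orders; for instance, the same computation with $u=2+\frac{1}{p-1}$ immediately recovers Proposition~\ref{prop:break0}. The price is that you import the integral form of Hilbert's different formula, which the paper never states. The paper's argument stays within the tools of Section~\ref{sect:Ramification} and produces the lower break $i_{\widetilde{L}/F}$ explicitly as a byproduct.

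Your closing cross-check through the tower $F\subset F(\mu_p)\subset\widetilde{L}$ is consistent with the result but not needed for the proof.
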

\begin{proof}
Since $F/\mbb{Q}_p$ is unramified, we have $m=1$. 
By Proposition \ref{prop:composite} and Proposition \ref{prop:BreakDegreeP}, 
we have 
$u_{\widetilde{L}/F}=\max \{ u_{L/F}, u_{F(\mu_p)/F} \} = 
u_{L/F} =1+\frac{\lambda}{p-1}$. 
Corollary \ref{cor:GaloisClosure} and 
Proposition \ref{prop:BreakDegreeP} show 
$i_{L/F}=u_{L/F}-v_p(\mfrak{D}_{L/F})=\frac{\lambda}{p-1}-
\frac{\lambda-1}{p}$. 
Put $\pi_{\widetilde{L}}=\frac{\zeta_p-1}{\pi_L}$, where $\zeta_p$ 
is a $p$-th primitive root of unity and 
$\pi_L$ is a prime element of $L$.
Then $\pi_{\widetilde{L}}$ is a prime element of $\widetilde{L}$. 
Since $W=\mrm{Gal}(\widetilde{L}/F(\mu_p))$ coincides with 
the wild inertia subgroup of $G=\mrm{Gal}(\widetilde{L}/F)$, 
\begin{align*}
i_{\widetilde{L}/F}&=\underset{1 \not= \sigma \in G}{\sup} 
v_p(\sigma \pi_{\widetilde{L}}-\pi_{\widetilde{L}})
=\underset{1 \not= \sigma \in W}{\sup} 
v_p(\sigma \pi_{\widetilde{L}}-\pi_{\widetilde{L}}) \\ 
&=\underset{1 \not= \sigma \in W}{\sup} v_p(\frac{\zeta_p-1}{\sigma\pi_L}
- \frac{\zeta_p-1}{\pi_L}) \\ 
&=v_p(\zeta_p-1)+ \underset{1 \not= \sigma \in W}{\sup} v_p(
\frac{\pi_L-\sigma\pi_L}{\sigma\pi_L\cdot\pi_L}) \\
&= \frac{1}{p-1} + i_{L/F} - \frac{2}{p} = \frac{\lambda+1}{p(p-1)}.
\end{align*}
Thus we have 
\begin{align*}
v_p(\mfrak{D}_{\widetilde{L}/F})
&=u_{\widetilde{L}/F}-i_{\widetilde{L}/F} = 
1+\frac{\lambda}{p-1} - \frac{\lambda+1}{p(p-1)} \\ 
&=\frac{p^2+(\lambda-1)p-\lambda
-1}{p(p-1)}
\end{align*}
\end{proof}
\begin{remark}
Since the ramification index of $\widetilde{L}/F$ is $p(p-1)$, we have
\[
v_{\widetilde{L}}(\mfrak{D}_{\widetilde{L}/F})
=p^2+(\lambda-1)p-\lambda-1.
\]
Hence
\[
p^2-2\leq v_{\widetilde{L}}(\mfrak{D}_{\widetilde{L}/F})
\leq 2p^2-3p,
\]
since $1\leq\lambda\leq p-1$.
\end{remark}
\begin{proposition}\label{prop:break0}
Let $F$ be finite unramified extension of $\mbb{Q}_p$ 
and $L$ a degree $p$ ramified extension of $F$. 
Suppose $L$ is of type $\langle 0 \rangle$, so that 
the Galois closure $\widetilde{L}$ of $L/F$ equals $L(\mu_p)$.
Then we have
\[ u_{\widetilde{L}/F}=\frac{2p-1}{p-1}, \quad 
v_p(\mfrak{D}_{\widetilde{L}/F})=\frac{2p^2-2p-1}{p(p-1)}. \]
\end{proposition}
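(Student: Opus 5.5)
The plan is to copy the proof of Proposition~\ref{prop:breaklambda}, replacing the type $\langle\lambda,m,\omega\rangle$ data by the type $\langle 0\rangle$ data.

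First we record the data for $L/F$. Since $F$ is unramified over $\mbb{Q}_p$, we have $e_F=1$, and type $\langle 0\rangle$ means $\lambda=0$ and $m=e_F+1=2$. Proposition~\ref{prop:BreakDegreeP} then gives
\[
v_p(\mfrak{D}_{L/F})=2-\frac1p,\qquad u_{L/F}=2+\frac{1}{p-1}=\frac{2p-1}{p-1}.
\]
Corollary~\ref{cor:GaloisClosure} then gives
\[
i_{L/F}=u_{L/F}-v_p(\mfrak{D}_{L/F})=\frac{1}{p-1}+\frac1p .
\]
We also confirm the claim that $\widetilde L=L(\mu_p)$. By Proposition~\ref{prop:CanonicalPolynomial}, $L=F(\sqrt[p]{\pi_Fu})$, a Kummer-type extension. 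It is non-Galois because $\mu_p\not\subset F$, so its Galois closure is $L(\mu_p)$.

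Next we compute the upper break of $\widetilde{L}/F$. By Proposition~\ref{prop:composite},
\[
u_{\widetilde L/F}=\max\{u_{L/F},u_{F(\mu_p)/F}\}.
\]
Here $u_{F(\mu_p)/F}=1$ by Proposition~\ref{prop:break}, since $F(\mu_p)/F$ is tamely but not trivially ramified. Hence $u_{\widetilde L/F}=\frac{2p-1}{p-1}$.

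For the different we compute $i_{\widetilde L/F}$ and use $v_p(\mfrak{D})=u-i$ from Corollary~\ref{cor:GaloisClosure}. As before, $\widetilde L/F$ is totally ramified of degree $p(p-1)$, and $\pi_{\widetilde L}=(\zeta_p-1)/\pi_L$ is a uniformizer. Its valuation is $\frac1{p-1}-\frac1p=\frac1{p(p-1)}$, so it generates $\mcal{O}_{\widetilde L}$.

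The main point to check is that the supremum defining $i_{\widetilde L/F}$ is attained on the wild inertia group $W=\mrm{Gal}(\widetilde L/F(\mu_p))$. For $\sigma\notin W$ the tame part acts nontrivially on the residue-level leading coefficient, so $v_{\widetilde L}(\sigma\pi_{\widetilde L}-\pi_{\widetilde L})=1$. By contrast, elements of $W$ give strictly larger values. For $\sigma\in W$ we have $\sigma\zeta_p=\zeta_p$, hence
\[
v_p(\sigma\pi_{\widetilde L}-\pi_{\widetilde L})=v_p(\zeta_p-1)+v_p(\pi_L-\sigma\pi_L)-\frac{2}{p}.
\]
All nontrivial conjugates of $\pi_L$ are equidistant (\cite[Lemma 1]{Ama71}), so $v_p(\pi_L-\sigma\pi_L)=i_{L/F}$ for every $\sigma\in W\setminus\{1\}$. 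This gives
\[
i_{\widetilde L/F}=\frac1{p-1}+\frac1{p-1}+\frac1p-\frac2p=\frac{2}{p-1}-\frac1p=\frac{p+1}{p(p-1)},
\]
which exceeds $\frac{1}{p(p-1)}$, confirming that $W$ realizes the supremum.

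Finally,
\[
v_p(\mfrak{D}_{\widetilde L/F})=\frac{2p-1}{p-1}-\frac{p+1}{p(p-1)}=\frac{2p^2-p-p-1}{p(p-1)}=\frac{2p^2-2p-1}{p(p-1)},
\]
as claimed.
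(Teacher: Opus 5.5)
Your proof is correct and follows the paper's argument. You get the upper break from Proposition~\ref{prop:composite}, compute $i_{\widetilde L/F}$ on the wild inertia group $W$ using the uniformizer $(\zeta_p-1)/\pi_L$, and obtain the different as $u-i$ via Corollary~\ref{cor:GaloisClosure}. The one difference is how you get $v_p(\pi_L-\sigma\pi_L)=\frac{1}{p-1}+\frac{1}{p}$: the paper reads it off the Kummer conjugates $\zeta_p^i\pi_F^{1/p}$, whereas you take it from Proposition~\ref{prop:BreakDegreeP} with $(\lambda,m)=(0,2)$ and Amano's equidistance, as in the proof of Proposition~\ref{prop:breaklambda}, and you also justify why the supremum is attained on $W$, which the paper only asserts.
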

\begin{proof}
By Proposition \ref{prop:CanonicalPolynomial} (ii), 
we have an isomorphism $L \simeq F[x]/(x^p-\pi_F)$ for 
some prime element $\pi_F$ of $F$. 
By Proposition \ref{prop:composite}, we have 
$u_{\widetilde{L}/F}=\max \{ u_{L/F}, u_{F(\mu_p)/F} \}
=u_{L/F}=2+\frac{1}{p-1}$. 
Put $\pi_{\widetilde{L}}=\frac{\zeta_p-1}{\pi_F^{1/p}}$, 
where $\zeta_p$ is a $p$-th primitive root of unity. 
Then $\pi_{\widetilde{L}}$ is a prime element of $\widetilde{L}$. 
Since $W=\mrm{Gal}(\widetilde{L}/F(\mu_p))$ coincides with 
the wild inertia subgroup of $G=\mrm{Gal}(\widetilde{L}/F)$, 
\begin{align*}
i_{\widetilde{L}/F}&=\underset{1 \not= \sigma \in G}{\sup} 
v_p(\sigma \pi_{\widetilde{L}}-\pi_{\widetilde{L}}) 
=\underset{1 \not= \sigma \in W}{\sup} 
v_p(\sigma \pi_{\widetilde{L}}-\pi_{\widetilde{L}}) 
\\ 
&=\sup_{1\leq i\leq p-1} v_p(\frac{\zeta_p-1}{\pi_F^{1/p}\zeta_p^i} - 
\frac{\zeta_p-1}{\pi_F^{1/p}}) \\
&=v_p(\zeta_p-1)+\sup_{1\leq i\leq p-1} v_p(\frac{1-\zeta_p^i}{\pi_F^{1/p}\zeta_p^i}) 
=\frac{1}{p-1}+\frac{1}{p-1}-\frac{1}{p} \\
&=\frac{p+1}{p(p-1)}.
\end{align*}
Thus we have 
\begin{align*}
v_p(\mfrak{D}_{\widetilde{L}/F})&=u_{\widetilde{L}/F}-i_{\widetilde{L}/F}=2+\frac{1}{p-1} - \frac{p+1}{p(p-1)} \\
&=\frac{2p^2-2p-1}{p(p-1)}.
\end{align*}
\end{proof}
\begin{remark}
Comparing Proposition \ref{prop:breaklambda} with Proposition \ref{prop:break0}, 
the invariant $\lambda$ of $L$ is determined only by the valuation 
$v_p(\mfrak{D}_{\widetilde{L}/F})$. 
\end{remark}
We denote by $\mfrak{U}$ the group of all units $u$ in $\mcal{O}_K$ 
such that $u \equiv 1$. 
Let $\mfrak{M}(0)$ be a fixed representative system of $\mfrak{U}$ modulo 
$\mfrak{U}^p$. 
\begin{proposition}[\cite{Ama71}, Theorem 6]\label{prop:ClassificationType0}
Each isomorphism class of ramified extensions of degree $p$ over $K$ 
of type $\langle 0 \rangle$ is defined by the roots of exactly one equation of the form 
\[ X^p-\pi_K u=0 \quad \mrm{with} \quad u \in \mfrak{M}(0). \]
Thus we obtain a one-to-one correspondence between the set $\mfrak{M}(0)$ 
and the isomorphism classes of such extensions $L/K$. 
\end{proposition}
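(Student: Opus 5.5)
The plan has two parts: existence (every type $\langle 0\rangle$ extension is cut out by some $X^p-\pi_Ku$ with $u\in\mfrak{M}(0)$), and uniqueness (two such equations with inequivalent $u$ define non-isomorphic fields).

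\emph{Existence.} For every $u\in\mfrak{U}$ the polynomial $X^p-\pi_Ku$ is Eisenstein and all $a_i=0$, so it defines a ramified degree-$p$ extension of type $\langle 0\rangle$. Conversely, if $L$ has type $\langle 0\rangle$, Proposition~\ref{prop:CanonicalPolynomial}~(ii) gives a prime element $x$ of $L$ with $x^p=\pi_Ku$. By our normalization $u\equiv 1$, so $u\in\mfrak{U}$. Now write $u=u_0w^p$ with $u_0\in\mfrak{M}(0)$ and $w\in\mfrak{U}$. Then $x/w$ is a root of $X^p-\pi_Ku_0$ and generates $L$. This proves existence.

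\emph{Uniqueness: reduction to a Kummer statement.} Suppose $L=K(x)\simeq K(y)$ with $x^p=\pi_Ku$ and $y^p=\pi_Ku'$, where $u,u'\in\mfrak{U}$. Identify $y$ with an element of $L$ and put $z=y/x$. Since $x$ and $y$ are both prime elements of $L$, $z$ is a unit of $L$, and $z^p=u'/u=:w\in\mfrak{U}$. As $[L:K]=p$ is prime, either $z\in K$ or $L=K(z)$.
\begin{itemize}
\item[(a)] If $z\in K$, reduce $z^p=w\equiv 1$ modulo $\mbf{m}_K$. The residue field is perfect of characteristic $p$, so the residue of $z$ is $1$, i.e.\ $z\in\mfrak{U}$. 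Hence $u'\equiv u$ modulo $\mfrak{U}^p$, and so $u=u'$ once both lie in $\mfrak{M}(0)$.
\item[(b)] Otherwise $L=K(w^{1/p})$ with $w\in\mfrak{U}$. I will rule this case out by a ramification-break comparison.
\end{itemize}

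\emph{Break of a type $\langle 0\rangle$ extension.} By Proposition~\ref{prop:BreakDegreeP} with $\lambda=0$ and $m=e_K+1$, a type $\langle 0\rangle$ extension has
\[
u_{L/K}=e_K+1+\frac{e_K}{p-1}=1+\frac{pe_K}{p-1}.
\]
This is the maximal value allowed by Proposition~\ref{prop:BoundRamificationBreak}.

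\emph{Break of a unit Kummer extension (main obstacle).} I must show that $K(w^{1/p})$ with $w\in\mfrak{U}$ has strictly smaller break. By Corollary~\ref{cor:GaloisClosure} I can pass to the Galois closure $M=K(\mu_p,w^{1/p})$. Set $K'=K(\mu_p)$, which is tame over $K$ with ramification index $e'=e(K'/K)$. Over $K'$, $M/K'$ is a Kummer extension generated by a $p$-th root of a principal unit $1+t$. Using the substitution $X=1+Y$ together with $p$-th power adjustments of $1+t$, the standard computation gives an upper break of $M/K'$ strictly less than $1+pe_{K'}/(p-1)$. That value is exactly the break of $K'(\pi_{K'}^{1/p})$. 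Since $K'/K$ is tame with trivial higher ramification, upper breaks $>1$ descend via $u\mapsto 1+(u-1)/e'$. This yields $u_{K(w^{1/p})/K}<1+pe_K/(p-1)$, contradicting the equality above, so case (b) is impossible. The delicate point is this Kummer computation: handling $1+t$ with $v_{K'}(t)$ divisible by $p$, and checking the boundary case $v_{K'}(t)=pe_{K'}/(p-1)$, where the extension is unramified.
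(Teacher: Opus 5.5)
Your argument is correct. The paper gives no proof of this statement; it is quoted from Amano's Theorem~6. Your proposal is therefore an independent derivation from two earlier results: Proposition~\ref{prop:CanonicalPolynomial}~(ii), which carries the real content of the existence half, and Proposition~\ref{prop:BreakDegreeP}. Its advantage is that it is self-contained given those two results.

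The existence half and case (a) are fine. In (a) you only need that $\bar z^p=1$ forces $\bar z=1$ in characteristic $p$; perfectness plays no role there. Case (b) is also correctly excluded:
\begin{itemize}
\item After $p$-th power adjustments, the ramified Kummer extension $M/K'$ has break $1+pe_{K'}/(p-1)-s$ with $p\nmid s\geq 1$, which is strictly below $1+pe_{K'}/(p-1)$.
\item The descent $u\mapsto 1+(u-1)/e'$ through the tame extension $K'/K$ is valid.
\end{itemize}

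However, the step you flag as the main obstacle can be bypassed entirely by comparing differents instead of upper breaks. Since $x$ is a prime element of $L$, we have $\mcal{O}_L=\mcal{O}_K[x]$, hence $v_L(\mfrak{D}_{L/K})=v_L(px^{p-1})=pe_K+p-1$. In case (b), $z$ is an integral generator of $L$ with minimal polynomial $X^p-w$. The inclusion $\mcal{O}_K[z]\subseteq\mcal{O}_L$ then shows that $\mfrak{D}_{L/K}$ divides $pz^{p-1}$, so $v_L(\mfrak{D}_{L/K})\leq v_L(p)=pe_K$. This is a contradiction.

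This shortcut is exactly the observation that a Kummer extension generated by a $p$-th root of a unit cannot have the maximal different that characterizes type $\langle 0\rangle$ via Proposition~\ref{prop:BreakDegreeP}. It needs no passage to $K(\mu_p)$, no $p$-th power adjustments of $1+t$, and no Herbrand functions. Your route additionally computes the break of a unit Kummer extension, but that information is not needed for the statement.
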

\begin{corollary}\label{cor:ClassificationType0}
Let $F$ be a finite unramified extension of degree $f$ over $\mbb{Q}_p$. 
Then each isomorphism class of degree $p$ ramified extensions $L/F$ 
of type $\langle 0 \rangle$ is defined by the roots of exactly 
one equation of the form 
\[ X^p+p(1+up)=0 \quad \mrm{with} \quad u \in \mfrak{R}_F. \]
Thus we obtain a one-to-one correspondence between the set $\mfrak{R}_F$ 
and the isomorphism classes of such extensions $L/F$. 
\end{corollary}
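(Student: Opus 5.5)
The plan is to specialize Proposition~\ref{prop:ClassificationType0} to $K=F$ and to make the representative system $\mfrak{M}(0)$ explicit. Since $F/\mbb{Q}_p$ is unramified, $e_F=1$, and $-p$ is a prime element of $F$. We therefore take $\pi_K=\pi_F:=-p$. The equation $X^p-\pi_Fu'=0$ then reads $X^p+pu'=0$. So it suffices to prove that the set
\[
\{\,1+up \;:\; u\in\mfrak{R}_F\,\}
\]
is a complete system of representatives of $\mfrak{U}$ modulo $\mfrak{U}^p$. Here $\mfrak{U}$ is the group of units $u'$ with $u'\equiv1$, i.e.\ $v_F(u'-1)>0$; since $F$ is unramified this means $\mfrak{U}=1+p\mcal{O}_F$. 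Once this is shown, Proposition~\ref{prop:ClassificationType0} gives the corollary directly, including the bijection with $\mfrak{R}_F$.

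The key step is the identity $\mfrak{U}^p=1+p^2\mcal{O}_F$, which uses that $p$ is odd. For the inclusion $\subseteq$, I would expand $(1+px)^p$ binomially. This gives $(1+px)^p=1+p^2x+\binom{p}{2}p^2x^2+\cdots$, and since $p\mid\binom{p}{2}$ for $p$ odd, we get $(1+px)^p\equiv 1+p^2x \pmod{p^3}$. For the reverse inclusion, I would use the $p$-adic logarithm and exponential. For $p$ odd they converge on $p\mcal{O}_F$ and give mutually inverse isomorphisms $1+p^n\mcal{O}_F\simeq p^n\mcal{O}_F$ for $n\geq1$, because $1>1/(p-1)$. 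Under these isomorphisms, raising to the $p$-th power corresponds to multiplication by $p$, which maps $p\mcal{O}_F$ onto $p^2\mcal{O}_F$. As an alternative to exp/log, one can argue by successive approximation: the congruence above lets one solve $(1+pz)^p=1+p^2y$ modulo $p^{k}$ for increasing $k$, and completeness of $\mcal{O}_F$ finishes. I expect this step to be the only real point requiring care, mainly checking that convergence or approximation works uniformly for all odd $p$, including $p=3$.

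With this identity, $\mfrak{U}/\mfrak{U}^p=(1+p\mcal{O}_F)/(1+p^2\mcal{O}_F)$, and the map $1+px\mapsto x \bmod p$ is an isomorphism onto the additive group $\mcal{O}_F/p\mcal{O}_F$. Indeed, $(1+px)(1+py)\equiv 1+p(x+y)\pmod{p^2}$, so the map is a homomorphism, and its kernel is exactly $1+p^2\mcal{O}_F$. Distinctness of representatives follows: for $u,u'\in\mfrak{R}_F$ we have $(1+up)/(1+u'p)\equiv 1+(u-u')p \pmod{p^2}$. This lies in $\mfrak{U}^p$ iff $u\equiv u'\pmod p$, iff $u=u'$. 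Surjectivity follows as well: given $1+px\in\mfrak{U}$, choose $u\in\mfrak{R}_F$ with $u\equiv x \pmod p$; then $(1+px)/(1+up)\in 1+p^2\mcal{O}_F=\mfrak{U}^p$.

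Thus $\{1+up : u\in\mfrak{R}_F\}$ may be taken as $\mfrak{M}(0)$. Substituting into Proposition~\ref{prop:ClassificationType0} with $\pi_F=-p$ yields that each type $\langle0\rangle$ class is defined by exactly one equation $X^p+p(1+up)=0$ with $u\in\mfrak{R}_F$. This gives the stated one-to-one correspondence with $\mfrak{R}_F$.
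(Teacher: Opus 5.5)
Your proposal is correct. It is the specialization of Proposition~\ref{prop:ClassificationType0} that the paper intends but states without proof: take $\pi_F=-p$ and $\mfrak{M}(0)=\{1+up : u\in\mfrak{R}_F\}$, which is a valid system of representatives of $\mfrak{U}/\mfrak{U}^p$ because $\mfrak{U}^p=1+p^2\mcal{O}_F$ for odd $p$, and your exp/log proof of that identity and of the bijection with $\mfrak{R}_F$ is complete.
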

\subsection{Extensions of degree \texorpdfstring{$p=2$}{p = 2}}
In this subsection, we present a list of the quadratic extensions $L$ 
of $\mbb{Q}_2$. 
Table \ref{table:degree2} presents such a list, which stores 
a defining polynomial, the residue degree $f$, the ramification index $e$, 
the valuation of the different $v_p(\mfrak{D}_{L/\mbb{Q}_2})$ and 
the upper ramification break $u_{L/\mbb{Q}_2}$. 
\begin{table}[H]
\begin{center}
\begin{tabular}{|c|c|c|c|c|c|} \hline 
     $L$ & Polynomial & $f$ & $e$ & $v_p(\mfrak{D}_{L/\mbb{Q}_2})$ & 
     $u_{L/\mbb{Q}_2}$ \\ \hline
  $\mbb{Q}_4$ & $X^2+X+1$ & $2$ & $1$ & $0$ & $0$ \\ \hline
  $L_1$ & $X^2+2X+2$ & $1$ & $2$ & $1$ & $2$ \\ \hline
  $L_2$ & $X^2+2X+6$ & $1$ & $2$ & $1$ & $2$ \\ \hline
  $L_3$ & $X^2+2$ & $1$ & $2$ & $3/2$ & $3$ \\ \hline
  $L_4$ & $X^2+10$ & $1$ & $2$ & $3/2$ & $3$ \\ \hline
  $L_5$ & $X^2+4X+2$ & $1$ & $2$ & $3/2$ & $3$ \\ \hline
  $L_6$ & $X^2+4X+10$ & $1$ & $2$ & $3/2$ & $3$ \\ \hline
\end{tabular}
\end{center}
\caption{The quadratic extensions of $\mbb{Q}_2$}\label{table:degree2}
\end{table}
\subsection{Ramification of \texorpdfstring{$p^n$}{p to the n}-torsion of elliptic curves}
Let $G_K$ be the absolute Galois group and, 
for any real number $m \geq 0$, $G_K^{(m)}$ the $m$-th 
upper ramification group defined in \cite{Fon85}. 
We compare with the notation $G_K^m$ defined in 
\cite[Chapter IV]{Ser68}, 
the equality $G_K^m=G_K^{(m+1)}$ holds. 
\begin{theorem}[\cite{Fon85}, Th\'eor\`eme 1]
\label{thm:BoundElliptic}
Let $n$ be an integer $\geq 1$ and $\mcal{G}$ a finite flat group scheme 
over $\mcal{O}_K$ killed by $p^n$. 
Let $H$ be the kernel of the action of $G_K$ 
on $\mcal{G}(\overline{K})$ and $L=\overline{K}^H$. 
Then we have $G_K^{(m)} \subset H$ for any $m>e_K \left( n+\frac{1}{p-1} \right)$ and 
$v_K(\mfrak{D}_{L/K})<e_K \left( n+\frac{1}{p-1} \right)$. 
\end{theorem}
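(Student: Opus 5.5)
This is Fontaine's theorem, cited from \cite{Fon85}, so I will only sketch how the argument goes rather than give an independent proof. The plan has three steps: first prove the different bound, then convert it into the vanishing of upper ramification groups, and finally handle general $n$.

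\textbf{Step 1: the different bound.} We may first replace $K$ by its maximal unramified extension and complete. Write $\mcal{G}=\mrm{Spec}\,A$ with $A$ a finite flat $\mcal{O}_K$-algebra, and let $\Omega_{A/\mcal{O}_K}$ be its module of differentials. The key input is that, because $\mcal{G}$ is a group scheme killed by $p^n$, the module $\omega=e^*\Omega$ of invariant differentials satisfies $\Omega_{A/\mcal{O}_K}\simeq A\otimes\omega$. Moreover $\omega$ is killed by $p^n$: multiplication by $p^n$ is zero on $\mcal{G}$ and induces multiplication by $p^n$ on $\omega$.

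Next, take an $\mcal{O}_{\overline{K}}$-point $x\colon A\to\mcal{O}_{\overline{K}}$, which factors through $\mcal{O}_L$. Pulling back differentials gives a map
\[
\Omega_{A}\otimes_A\mcal{O}_{\overline{K}}\to\Omega_{\mcal{O}_{\overline{K}}/\mcal{O}_K}.
\]
Using all the points of $\mcal{G}(\overline{K})$ simultaneously, one shows that the image is large enough to force $\Omega_{\mcal{O}_L/\mcal{O}_K}$ to be killed by $p^n$, up to a small error. Since the annihilator of $\Omega_{\mcal{O}_L/\mcal{O}_K}$ is the different, this yields
\[
v_K(\mfrak{D}_{L/K})<e_K\Bigl(n+\tfrac{1}{p-1}\Bigr).
\]
The extra term $\tfrac{1}{p-1}$ arises from comparing $\Omega_{\mcal{O}_{\overline{K}}/\mcal{O}_K}$, which is $\overline{K}/\mfrak{a}$ with $v(\mfrak{a})$ equal to the different of $K$ plus $\tfrac{1}{p-1}$.

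\textbf{Step 2: from the different to the ramification groups.} Fontaine's main lemma is the following criterion. Suppose that for every finite extension $E'/K$ inside $\overline{K}$, every $\mcal{O}_K$-algebra map $\mcal{O}_L\to\mcal{O}_{\overline{K}}/\mfrak{a}_{E'}^{m}$ lifts, meaning the condition (P$_m$) holds. Then $G_K^{(m')}$ acts trivially on $L$ for all $m'>m$.

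To verify (P$_m$) for $m$ beyond $e_K(n+\tfrac{1}{p-1})$, I would use the group structure. Given an approximate point, one can translate by genuine points, and the bound on $\Omega$ from Step 1 shows that approximate points of a sufficient level lift. Concretely, this is the "torsor under $\mcal{G}(\mcal{O}/\mfrak{a})$" argument: the fibres of $\mcal{G}(\mcal{O}_{\overline{K}})\to\mcal{G}(\mcal{O}_{\overline{K}}/\mfrak{a}^{m})$ are controlled by $\mrm{Hom}(\omega,\mfrak{a}^m/\mfrak{a}^{2m})$.

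\textbf{Main obstacle.} This translation between Deligne-style approximation properties and the upper numbering filtration is the hard step. It is cleaner to pass through Proposition~\ref{prop:beta}, which is the Deligne characterization already stated above.

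\textbf{Step 3: reduction to $n=1$.} For general $n$, I would argue by induction using the filtration $\mcal{G}[p]\subset\mcal{G}$. The point is that the bound $n+\tfrac{1}{p-1}$ scales additively: $\omega$ killed by $p^n$ contributes $e_Kn$, and the cyclotomic-type term contributes $\tfrac{e_K}{p-1}$.

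In our application, $\mcal{G}$ is the $p$-torsion of the Néron model of an elliptic curve with good reduction, with $n=1$. The theorem is simply quoted from \cite{Fon85}.
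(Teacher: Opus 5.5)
\textbf{Verdict: genuine gap.} The paper gives no argument for this statement; it simply quotes \cite{Fon85}, so your closing sentence does what the paper does. The sketch you attach does not prove the statement, though. It also locates the difficulty in the wrong place.

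\textbf{Where the content actually lies.} Relating upper numbering to approximation properties is the routine part: it is Fontaine's criterion, equivalently Proposition~\ref{prop:beta}. The real task is to show that for $m>e_K(n+\frac{1}{p-1})$, every $\mcal{O}_K$-algebra map $\eta\colon\mcal{O}_L\to\mcal{O}_{E'}/\mfrak{a}_{E'}^{m}$ comes from a $K$-embedding $L\hookrightarrow E'$. Here $\mfrak{a}_{E'}^{c}=\{x\in\mcal{O}_{E'}: v_K(x)\ge c\}$. This needs two facts.
\begin{itemize}
\item[(a)] Every point of $\mcal{G}(\mcal{O}_{E'}/\mfrak{a}_{E'}^{m})$ agrees modulo $\mfrak{a}_{E'}^{m-e_Kn}$ with a point of $\mcal{G}(\mcal{O}_{E'})$.
\item[(b)] The reduction map $\mcal{G}(\mcal{O}_{\overline{K}})\to\mcal{G}(\mcal{O}_{\overline{K}}/\mfrak{a}^{c})$ is injective for $c>e_K/(p-1)$.
\end{itemize}
Given these, $\eta$ carries $\mcal{G}(\mcal{O}_L)=\mcal{G}(\overline{K})$ injectively into $\mcal{G}(\mcal{O}_{E'})$, which forces $L\subset E'$.

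\textbf{What your sketch does not supply.} Your torsor remark gives neither (a) nor (b).
\begin{itemize}
\item It describes the fibres of $\mcal{G}(R)\to\mcal{G}(R/I)$ for square-zero $I$ only when those fibres are non-empty. Non-emptiness is exactly the issue, because $\mcal{G}$ is not smooth.
\item Fact (b) is not addressed at all. Without it the lifts are neither unique nor $E'$-rational, and it is sharp: $v_K(\zeta_p-1)=e_K/(p-1)$ for $\mu_p$.
\end{itemize}

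\textbf{Step 1 cannot feed Step 2.} Your Step~1 is not argued (``up to a small error''), and it could not help even if it were. By Corollary~\ref{cor:GaloisClosure}, $u_{L/K}=v_K(\mfrak{D}_{L/K})+i_{L/K}$, with $0<i_{L/K}\le v_K(\mfrak{D}_{L/K})$ when $L/K$ is ramified. So a bound on the different only gives $u_{L/K}<2e_K(n+\frac{1}{p-1})$. The implication runs the other way: the different bound in the statement follows from the ramification bound, because $i_{L/K}>0$.

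\textbf{One way to obtain (a) and (b).}
\begin{itemize}
\item By Raynaud's theorem, $\mcal{G}=\ker(f\colon B\to B')$ for an isogeny of abelian schemes over $\mcal{O}_K$. Since $p^n\mcal{G}=0$, there is $g\colon B'\to B$ with $f\circ g=p^n$.
\item Lift $y\in\mcal{G}(\mcal{O}_{E'}/\mfrak{a}_{E'}^{m})$ to $\tilde y\in B(\mcal{O}_{E'})$ using smoothness. Then $f(\tilde y)\in\hat{B}'(\mfrak{a}_{E'}^{m})$.
\item For $c>e_K/(p-1)$ the formal logarithm is an isometric isomorphism $\hat{B}'(\mfrak{a}^{c})\simeq(\mfrak{a}^{c})^{\dim B'}$. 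So when $m>e_K(n+\frac{1}{p-1})$ you can write $f(\tilde y)=p^nw$ with $w\in\hat{B}'(\mfrak{a}^{m-e_Kn})$.
\item Then $\tilde y-g(w)\in\mcal{G}(\mcal{O}_{E'})$ lifts $y$ modulo $\mfrak{a}^{m-e_Kn}$, which is (a).
\item The same logarithm shows $\hat{B}(\mfrak{a}^{c})$ is torsion-free for $c>e_K/(p-1)$, which is (b).
\end{itemize}
Here the loss $e_Kn$ and the threshold $e_K/(p-1)$ both appear at once, with no induction on $n$.

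\textbf{Step 3 fails in any case.} The scheme-theoretic kernel $\mcal{G}[p]$ need not be flat over $\mcal{O}_K$. Moreover, along a tower, ramification bounds compose through Herbrand functions rather than adding. So an induction on $n$ would not recover the sharp constant.
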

\begin{corollary}
\label{cor:BreakBound}
Let $E$ be an elliptic curve over $K$ with good reduction. 
Put $L=K(E[p^n])$. 
Then we have 
\[
u_{L/K} \leq e_K \left( n+\frac{1}{p-1} \right).
\]
\end{corollary}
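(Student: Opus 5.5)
The plan is to apply Theorem~\ref{thm:BoundElliptic} to the $p^n$-torsion of the Néron model and then convert Fontaine's bound into a bound on $u_{L/K}$ by means of the definition of $u_{L/K}$.

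Since $E$ has good reduction over $K$, its Néron model $\mcal{E}$ is an abelian scheme over $\mcal{O}_K$. Hence the kernel $\mcal{G}=\mcal{E}[p^n]$ of multiplication by $p^n$ is a finite flat group scheme over $\mcal{O}_K$ of order $p^{2n}$, and it is killed by $p^n$.

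Its generic fibre is $E[p^n]$, so $\mcal{G}(\overline{K})=E[p^n]$ as a $G_K$-module. The kernel $H$ of the action of $G_K$ on $\mcal{G}(\overline{K})$ therefore satisfies $\overline{K}^H=K(E[p^n])=L$. Theorem~\ref{thm:BoundElliptic} then gives $G_K^{(m)}\subset H=G_L$ for every $m>e_K(n+\frac{1}{p-1})$.

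Next I translate this into a statement about the finite extension. Since $L/K$ is Galois, the upper numbering is compatible with quotients (Herbrand's theorem). It follows that $\mrm{Gal}(L/K)^{(m)}$ is the image of $G_K^{(m)}$ in $\mrm{Gal}(L/K)=G_K/G_L$.

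By the remark in Section~\ref{sect:Ramification}, this group equals $\Gamma(L/K)^{(m)}$, since both filtrations are the shift of Serre's upper numbering, $G^m=G^{(m+1)}$. Consequently $\Gamma(L/K)^{(m)}=1$ for all $m>e_K(n+\frac{1}{p-1})$.

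By the definition of $u_{L/K}$ as the supremum of those $m$ with $\Gamma(L/K)^{(m)}\neq 1$, we get $u_{L/K}\le e_K(n+\frac{1}{p-1})$. If $L=K$, the convention $u_{L/K}=0$ gives the bound trivially.

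The only point requiring care is keeping the two normalizations of the upper filtration consistent: Fontaine's $G^{(m)}$ versus the paper's $\Gamma(L/K)^{(m)}=\Gamma(L/K)^{m-1}$. These match precisely because of the identification $G_K^m=G_K^{(m+1)}$ recalled just before Theorem~\ref{thm:BoundElliptic}.
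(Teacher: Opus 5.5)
Your proposal is correct and is exactly the argument the paper intends. The paper states this corollary without a written proof, as an immediate consequence of Theorem~\ref{thm:BoundElliptic} applied to the finite flat group scheme $\mathcal{E}[p^n]$ of the good-reduction model. Your care in passing to the quotient $\mathrm{Gal}(L/K)$ via Herbrand's theorem, and in matching Fontaine's $G^{(m)}$ with $\Gamma(L/K)^{(m)}=\Gamma(L/K)^{m-1}$, simply makes explicit the steps the paper leaves implicit.
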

\begin{remark}
When $K=\mbb{Q}_p$ and $n=1$, 
this bound is sharp since $u_{L/\mbb{Q}_p}=1+\frac{1}{p-1}$ 
if $E$ has good ordinary reduction and $E[p]$ is wildly ramified (cf.\ Theorem \ref{cor:defferent}).  
\end{remark}
\begin{proposition}
Let $E$ be an elliptic curve over $K$ with $v_p(j)<0$. 
Put $L=K(E[p^n])$. Then we have 
\[ u_{L/K} \leq 1+e_K\left( n+\frac{1}{p-1} \right). \]
\end{proposition}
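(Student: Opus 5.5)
We prove the bound via Tate uniformization: the $p^n$-torsion of a Tate curve sits in an extension of $\mu_{p^n}$ by $\mbb{Z}/p^n\mbb{Z}$ whose splitting field is a Kummer extension, and both pieces have explicitly computable breaks.

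\emph{Reduction to the split Tate case.} Since $v_p(j)<0$, there is a character $\chi\colon G_K\to\{\pm1\}$ with kernel field $M$, $[M:K]\le 2$, such that $E$ becomes $K$-isomorphic to a Tate curve $E_q$, $q\in K^\times$, $v_K(q)=-v_K(j)>0$, after twisting by $\chi$. Then $E[p^n]\simeq E_q[p^n]\otimes\chi$ as $G_K$-modules. Hence $L\subset L'M$ with $L'=K(E_q[p^n])$. If $p$ is odd, also $L'\subset LM$, and so $L'M=LM$.

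The field $M$ is trivial or quadratic. For $p$ odd it is tamely ramified, so $u_{M/K}\le 1$. For $p=2$ one has $M=K(\sqrt{d})$ with $d\in\mcal{O}_K$ chosen of minimal valuation modulo squares. Proposition~\ref{prop:BoundRamificationBreak} with $e\le 2$ then gives $u_{M/K}\le 1+e_K(1+1)$, which is at most the target bound since $n\ge1$.

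By Proposition~\ref{prop:composite} and Corollary~\ref{cor:GaloisClosure}, $u_{L/K}\le u_{L'M/K}=\max\{u_{L'/K},u_{M/K}\}$. It therefore suffices to bound $u_{L'/K}$.

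\emph{The Tate curve.} By Tate's uniformization, $E_q(\overline{K})\simeq\overline{K}^\times/q^{\mbb{Z}}$ as Galois modules. Hence
\[
L'=K(\mu_{p^n},q^{1/p^n}).
\]
Apply Proposition~\ref{prop:composite} again, this time to $K(\mu_{p^n})$ and $K(q^{1/p^n})$; note $L'$ is the compositum of $K(\mu_{p^n})$ and $K(\zeta q^{1/p^n})$ for a single root. It remains to bound $u$ for each of $K(\mu_{p^n})/K$ and $K(q^{1/p^n})/K$.

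\emph{The cyclotomic piece.} $K(\mu_{p^n})$ is generated by the $p^n$-torsion of the multiplicative group $\mathbb{G}_m$, a finite flat group scheme over $\mcal{O}_K$ killed by $p^n$. Theorem~\ref{thm:BoundElliptic} therefore gives $u_{K(\mu_{p^n})/K}\le e_K(n+\frac{1}{p-1})$.

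\emph{The Kummer piece.} Write $N=K(\alpha)$ with $\alpha^{p^n}=q$ and let $e$ be the ramification index of $N/K$. Since $[N:K]\le p^n$, we have $v_p(e)\le n$. Proposition~\ref{prop:BoundRamificationBreak} then gives
\[
u_{N/K}\le 1+e_K\Bigl(n+\frac{1}{p-1}\Bigr).
\]
Taking the maximum of the three bounds yields the claim.

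\emph{Main obstacle.} The delicate point is the twist at $p=2$, where $\chi$ is not tame. For $p=2$ we do not need $L'M=LM$: the inclusion $L\subset L'M$ alone suffices. The only thing to check is the elementary estimate $u_{M/K}\le 1+2e_K\le 1+e_K(n+1)$, which holds because $n\ge1$.
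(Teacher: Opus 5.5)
Your proposal is correct and is essentially the paper's proof. Both realize $E$ as an at most quadratic twist of a Tate curve, so that $L$ lies in the compositum of an at most quadratic extension, $K(\mu_{p^n})$ and $K(q^{1/p^n})$. Both then bound each piece separately via Proposition~\ref{prop:composite} and Proposition~\ref{prop:BoundRamificationBreak}. The only deviation is that you handle the cyclotomic piece with Theorem~\ref{thm:BoundElliptic} applied to the finite flat group scheme $\mu_{p^n}$, whereas the paper uses Proposition~\ref{prop:BoundRamificationBreak} there as well; either bound lies within $1+e_K\bigl(n+\frac{1}{p-1}\bigr)$.
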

\begin{proof}
By the assumption $v_p(j)<0$, there is an isomorphism 
$E(\overline{K}) \simeq \overline{K}^{\times}/q^{\mathbb{Z}}$ over $T$ for some $q \in K^{\times}$, 
where $T/K$ is at most a quadratic extension. 
The $p^n$-torsion of $\overline{K}^{\times}/q^{\mbb{Z}}$
consists of the classes of roots of the equations $X^{p^n}=q^m$
$(m\in\mbb{Z})$. 
Hence, we have $L \subset T(\mu_{p^n},\sqrt[p^n]{q})=:M$.
By Propositions \ref{prop:composite} and \ref{prop:BoundRamificationBreak}, 
we have the inequalities
\[ u_{L/K} \leq u_{M/K}=\max \{ u_{T/K},u_{K(\mu_{p^n})/K},u_{K(\sqrt[p^n]{q})/K} \} \leq 1+e_K \left( 
n+\frac{1}{p-1} \right). \]
\end{proof}
\begin{remark}
When $p>2$ and $E$ has multiplicative reduction over $K$, this bound can also be obtained
by \cite[Theorem 1.1]{Hat09}. 
\end{remark}
\begin{proposition}\label{prop:RamificationBoundPotGood}
Suppose $p>3$.
Let $E$ be an elliptic curve with additive reduction and $v_p(j) \geq 0$. 
Put $L=K(E[p^n])$. 
Take the minimal extension $K'/K$ such that $E$ has good reduction over $K'$. 
Denote by $e_{K'/K}$ the ramification index of $K'/K$. 
Then we have
\[ u_{L/K} \leq 1+e_K\left( n+\frac{1}{p-1} \right) - \frac{1}{e_{K'/K}}. \]
In particular, the inequality $u_{L/K} \leq 1+e_K(n+\frac{1}{p-1})$ holds.
\end{proposition}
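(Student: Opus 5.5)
We follow the pattern of the preceding proposition: embed $L$ into a composite whose upper breaks we can control, then apply Proposition~\ref{prop:composite}.

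Since $p>3$ and $E$ has potentially good reduction ($v_p(j)\ge 0$), the extension $K'/K$ over which $E$ acquires good reduction can be taken to be $K'=K^{\mrm{ur}}$-twisted tame: more precisely, over $K^{\mrm{ur}}$ the minimal such extension is $K^{\mrm{ur}}(\sqrt[e]{\pi_K})$ with $e=e_{K'/K}\mid 12$ (Serre--Tate; $p>3$ forces $p\nmid e$). So we may choose $K'$ tamely ramified with ramification index $e=e_{K'/K}$, possibly after an unramified base change, which changes neither $u_{L/K}$ nor $e_K$. Put $L'=K'(E[p^n])$, so that $L\subset L'$ and $u_{L/K}\le u_{L'/K}$ by Proposition~\ref{prop:composite} applied to $L'=LK'$.

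The key step is to compute $u_{L'/K}$ via the Herbrand function of $K'/K$. Since $E$ has good reduction over $K'$, Corollary~\ref{cor:BreakBound} gives
\[
u_{L'/K'}\le e_{K'}\Bigl(n+\frac{1}{p-1}\Bigr)=e\,e_K\Bigl(n+\frac{1}{p-1}\Bigr).
\]
Now pass from $K'$ to $K$. For a tame extension $K'/K$ of ramification index $e$ (with Galois closure also tame, so upper numbering is compatible after passing to the Galois closure via Corollary~\ref{cor:GaloisClosure}), the Herbrand function in the shifted (Fontaine) normalization satisfies $\varphi^{(\cdot)}_{K'/K}(x)=1+\frac{x-1}{e}$ for $x\ge 1$: indeed the lower break of $K'/K$ is at $0$ in Serre's normalization, so $\varphi_{K'/K}(y)=y/e$ for $y\ge0$, and shifting by one gives the formula. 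Since $G_{K'}^{(x)}=G_{K'}\cap G_K^{(\varphi(x))}$ for $x$ beyond the breaks of $K'/K$ (Herbrand's theorem, applied to a Galois tame closure), we get
\[
u_{L'/K}=\max\Bigl\{u_{K'/K},\,1+\frac{u_{L'/K'}-1}{e}\Bigr\}\le \max\Bigl\{1,\;1+e_K\Bigl(n+\frac1{p-1}\Bigr)-\frac1e\Bigr\}.
\]
As $e_K(n+\frac1{p-1})\ge 1\ge \frac1e$, the maximum is the second term, which is the claimed bound; the final inequality is then trivial.

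The main obstacle is the bookkeeping in the Herbrand transfer: $K'/K$ need not be Galois, so one should replace $K'$ by its Galois closure $K''$ (still tame, with the same ramification index $e$ when $K'=K^{\mrm{ur}}(\sqrt[e]{\pi_K})$ up to unramified twist, since adjoining $\mu_e$ is unramified), apply Corollary~\ref{cor:BreakBound} over $K''$, and check that $\varphi_{K''/K}$ has the single shape above. One should also verify carefully the normalization shift ($G^m=G^{(m+1)}$) so that the $-\tfrac1e$ term, rather than $-1+\tfrac1e$ or similar, comes out.
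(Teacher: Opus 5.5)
Your proof is correct and has the same overall structure as the paper's. Both pass to a tame Galois $K'/K$ with ramification index $e=e_{K'/K}$ and bound $u_{L'/K'}$ by Corollary~\ref{cor:BreakBound}. Both then transfer the bound to $K$ through the identity $e\,u_{L'/K}=u_{L'/K'}+e-1$ (valid when $L'/K$ is wild) and conclude with Proposition~\ref{prop:composite}.

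The two arguments differ in how the transfer identity is proved.
\begin{itemize}
\item The paper stays within the tools of Section~\ref{sect:Ramification}. It writes $u=v(\mathfrak{D})+i$ via Corollary~\ref{cor:GaloisClosure}, uses transitivity of the different together with $v_{K'}(\mathfrak{D}_{K'/K})=e-1$, and takes a common generator $\alpha$ of $\mathcal{O}_{L'}$ over $\mathcal{O}_K$ and over $\mathcal{O}_{K'}$. The maximal lower break is then attained on the common wild inertia group, so $e\,i_{L'/K}=i_{L'/K'}$.
\item You use Herbrand theory instead: $\varphi_{K'/K}(y)=y/e$ for $y\geq 0$, and $G_{K'}^{w}=G_{K'}\cap G_K^{\varphi_{K'/K}(w)}$. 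The latter follows from lower numbering passing to subgroups plus transitivity of $\varphi$, and that is where you need $K'/K$ Galois.
\end{itemize}
Your route is more conceptual: it shows the saving $-1/e$ is exactly the slope of the Herbrand function of a tame extension. The paper's route avoids transitivity of Herbrand functions and works only with differents and lower breaks, which it has already set up.

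Two points to tidy up when you write it out.
\begin{itemize}
\item Replace ``for $x$ beyond the breaks of $K'/K$'' by the precise fact you need. Let $P_K$ be the wild inertia group. For $m>1$ one has $G_K^{(m)}\subseteq P_K\subseteq G_{K'}$, so $G_K^{(m)}=G_{K'}^{(1+e(m-1))}$. This gives $u_{L'/K}=1+(u_{L'/K'}-1)/e$ whenever $u_{L'/K}>1$. The case $u_{L'/K}\leq 1$ is trivial, so you never need the equality with the maximum, only the inequality.
\item Your first paragraph is garbled (``$K'=K^{\mrm{ur}}$-twisted tame''). All you actually use is that $K'/K$ is tame with $p\nmid e$, which follows from $p>3$, and that its Galois closure is still tame with the same ramification index $e$.
\end{itemize}
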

\begin{proof}
If $E[p^n]$ is tame, then the result is immediate since $u_{L/K}\leq1$.
Hence we may assume that $E[p^n]$ is wildly ramified. Since $p>3$,
the ramification index of the minimal extension $K'/K$ over which $E$
acquires good reduction is prime to $p$, so $K'/K$ is tamely ramified.
Replacing $K'$ by its Galois closure over $K$, if necessary, we may
assume that $K'/K$ is Galois; this replacement does not change
$e_{K'/K}$ because the extension is tame.

Let $E'/K'$ be an elliptic curve with good reduction obtained from
$E_{K'}$ by a change of Weierstrass coordinates, and put
$L'=LK'=K'(E[p^n])$. Then $L'/K$ and $L'/K'$ are Galois extensions,
and $E[p^n]\simeq E'[p^n]$ as $G_{K'}$-modules. By
Theorem~\ref{thm:BoundElliptic}, $G_{K'}^{(m)}$ acts trivially on
$E'[p^n]$ for $m>e_{K'}(n+\frac{1}{p-1})$. Hence
$u_{L'/K'}\leq e_{K'}(n+\frac{1}{p-1})$.

Choose $\alpha\in\mcal{O}_{L'}$ such that
$\mcal{O}_{L'}=\mcal{O}_K[\alpha]$; then also
$\mcal{O}_{L'}=\mcal{O}_{K'}[\alpha]$. Let $W$ denote the common wild
inertia subgroup of $\mrm{Gal}(L'/K)$ and $\mrm{Gal}(L'/K')$. These
subgroups agree because $K'/K$ is tame. Since both $L'/K$ and
$L'/K'$ are wildly ramified, the maximal lower breaks are attained on
$W$, and therefore
\begin{align*}
e_{K'/K}i_{L'/K}
&=\sup_{1\neq\sigma\in\mrm{Gal}(L'/K)}
  v_{K'}(\sigma\alpha-\alpha)
 =\sup_{1\neq\sigma\in W}v_{K'}(\sigma\alpha-\alpha)\\
&=\sup_{1\neq\sigma\in\mrm{Gal}(L'/K')}
  v_{K'}(\sigma\alpha-\alpha)
 =i_{L'/K'}.
\end{align*}
On the other hand, the transitivity of the different and the tameness
of $K'/K$ give
\[
v_{K'}(\mfrak{D}_{L'/K})
=v_{K'}(\mfrak{D}_{L'/K'})+v_{K'}(\mfrak{D}_{K'/K})
=v_{K'}(\mfrak{D}_{L'/K'})+(e_{K'/K}-1).
\]
Consequently, Corollary~\ref{cor:GaloisClosure} yields
\begin{align*}
e_{K'/K}u_{L'/K}
&=i_{L'/K'}+v_{K'}(\mfrak{D}_{L'/K'})+(e_{K'/K}-1)\\
&=u_{L'/K'}+(e_{K'/K}-1)\\
&\leq e_{K'}\left(n+\frac{1}{p-1}\right)+(e_{K'/K}-1).
\end{align*}
Dividing by $e_{K'/K}$ and using $e_{K'}=e_Ke_{K'/K}$, we obtain
\[
u_{L'/K}\leq e_K\left(n+\frac{1}{p-1}\right)
+1-\frac{1}{e_{K'/K}}.
\]
Finally, Proposition~\ref{prop:composite} gives
$u_{L'/K}=\max\{u_{L/K},u_{K'/K}\}=u_{L/K}$, because $L/K$ is wild
and $K'/K$ is tame. This proves the assertion.
\end{proof}
In Proposition \ref{prop:RamificationBoundPotGood}, 
the authors do not know whether $u_{L/K} \leq 1+e_K(n+\frac{1}{p-1})$ 
remains valid for $p=2,3$. 
However, we can prove it for $n=1$ as follows:
\begin{proposition}\label{prop:DifferentElliptic}
Let $E$ be an elliptic curve over $K$. 
Put $L=K(E[p])$. 
Then we have 
\[ u_{L/K} \leq 1 + e_K \left( 1+ \frac{1}{p-1} \right).\]
\end{proposition}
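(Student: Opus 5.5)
Since the statement concerns only $n=1$, we argue by reduction type, reusing the bounds already proved. If $E$ has good reduction over $K$, Corollary~\ref{cor:BreakBound} gives $u_{L/K}\le e_K(1+\frac{1}{p-1})$, which is stronger than needed. If $v_p(j)<0$, the proposition on potentially multiplicative reduction (with $n=1$) gives exactly the desired bound. If $E$ has additive, potentially good reduction and $p>3$, Proposition~\ref{prop:RamificationBoundPotGood} applies. The remaining case is additive potentially good reduction with $p\in\{2,3\}$. There the field $K'$ over which $E$ acquires good reduction may be wildly ramified, so the tame descent in the proof of Proposition~\ref{prop:RamificationBoundPotGood} breaks down.

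For that case we exploit the fact that $n=1$ lets us avoid $K'$ entirely. Let $T=K(E[p])$ and use the classical description of $E[p]$ via the division polynomial. Since $p\in\{2,3\}$, the $x$-coordinates of nonzero points of $E[p]$ are the roots of a polynomial of degree $(p^2-1)/2$ (the $2$-division cubic for $p=2$, the $3$-division quartic for $p=3$). The full field is obtained from these by adjoining at most square roots for $p=3$; for $p=2$ the $x$-coordinates already suffice. Concretely, we bound the upper break of each constituent extension and use Proposition~\ref{prop:composite} to take the maximum. Each constituent is generated by a root of a polynomial of degree $d\le 8$ with $v_p(d!)$ small. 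Proposition~\ref{prop:BoundRamificationBreak} bounds the break of a field of ramification index $e$ by $1+e_K(v_p(e)+\frac{1}{p-1})$. This is enough as long as $v_p(e)\le 1$, which holds whenever the extension has degree $<p^2$.

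So the key step is to check that every field we compose has $v_p$ of its ramification index at most $1$. For $p=3$: $\#GL_2(\mbb{F}_3)=48$, so any subextension of $L/K$ has ramification index with $v_3\le 1$. Applying Proposition~\ref{prop:BoundRamificationBreak} directly to $L/K$ with $v_3(e)\le 1$ gives the bound, so no decomposition is needed. For $p=2$: $\#GL_2(\mbb{F}_2)=6$, so $v_2(e)\le 1$ and again Proposition~\ref{prop:BoundRamificationBreak} applies directly to $L/K$. In general, $e$ divides $[L:K]$, which divides $\#GL_2(\mbb{F}_p)=p(p-1)^2(p+1)$, whose $p$-adic valuation is exactly $1$. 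Hence Proposition~\ref{prop:BoundRamificationBreak} alone yields
\[
u_{L/K}\le 1+e_K\left(1+\frac{1}{p-1}\right)
\]
for every $E$ and every $p$, without any case split. I would present this one-line argument as the proof, noting the case analysis above only as a consistency check.

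The only point requiring care is that Proposition~\ref{prop:BoundRamificationBreak} is applied to the possibly non-Galois extension $L/K$. Here $L/K$ is in fact Galois with group embedded in $GL_2(\mbb{F}_p)$, so $e\mid \#GL_2(\mbb{F}_p)$ and $v_p(e)\le1$. I expect no real obstacle beyond confirming that this proposition was stated for arbitrary finite extensions, which it was.
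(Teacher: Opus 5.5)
Your one-line argument is correct and is essentially the paper's proof: $\mathrm{Gal}(L/K)$ embeds in $\mathrm{GL}_2(\mathbb{F}_p)$, whose order $p(p-1)^2(p+1)$ has $p$-adic valuation $1$, so $v_p(e)\le 1$ and Proposition~\ref{prop:BoundRamificationBreak} gives the bound. The paper phrases this with $v_p([L:K])$ instead of $v_p(e)$, which is equivalent here since $e$ divides $[L:K]$. You are right to discard the preliminary case analysis: it is unnecessary, and its division-polynomial degree count $(p^2-1)/2$ is wrong for $p=2$, where the degree is $3$.
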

\begin{proof}
With a suitable basis of $E[p]$, we consider the representation
$G_K \to GL_{\mbb{F}_p}(E[p]) \simeq GL_2(\mbb{F}_p)$. 
The order $m=\sharp \mrm{Gal}(L/K)$ divides $\sharp GL_2(\mbb{F}_p)=p(p-1)(p^2-1)$. 
By Proposition \ref{prop:BoundRamificationBreak}, 
we have 
\[ u_{L/K} \leq 1+e_K \left( v_p(m)+\frac{1}{p-1} \right) \leq 1+e_K \left( 
1+\frac{1}{p-1} \right).\]
\end{proof}

\section{Proofs of main results}\label{Sect:Proof}
\subsection{Good reduction case}
In this section, we give proofs of the theorems in Section \ref{main:local}.
If $E$ is an elliptic curve defined over a $p$-adic field   with good ordinary reduction
and $\hat{E}$ is the formal group associated with $E$, 
we often identify the $p$-torsion part $\hat{E}[p]$ 
of $\hat{E}$ with the kernel of the reduction map $E[p]\to \bar{E}[p]$.

We start with two lemmas before moving on to the proof of the main result. 

\begin{lemma}
\label{lem:tameness}
Let $p$ be an odd prime. 
Let $\mbb{F}$ be the finite field of order $p^f$
and $F$ the unramified extension of $\mbb{Q}_p$ of degree $f$.
Let $E_{/F}$ be an elliptic curve with good ordinary reduction
and denote by $\bar{E}$ its reduction.
Assume that $\bar{E}(\mbb{F})[p]\not=0$.
Then, the following are equivalent:
\begin{itemize}
\item[{\rm (i)}] $E(F)[p]\not=0$.
\item[{\rm (ii)}] $E[p]\simeq \mbb{F}_p(1)\oplus \mbb{F}_p$ as $G_F$-modules.
\item[{\rm (iii)}] $E[p]$ is tame.
\end{itemize}
\end{lemma}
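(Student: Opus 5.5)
The proof rests on the connected--étale exact sequence of the finite flat group scheme $\mathcal{E}[p]$ over $\mcal{O}_F$:
\[
0\to \hat{E}[p]\to E[p]\to \bar{E}[p]\to 0 .
\]
Here $\hat{E}[p]$ is the kernel of reduction, and $\bar{E}[p]$ is the étale quotient, viewed as an unramified $G_F$-module of order $p$. Since $\bar{E}(\mbb{F})[p]\neq 0$, Frobenius of $\mbb{F}$ acts trivially on $\bar{E}[p]$, so $\bar{E}[p]\simeq \mbb{F}_p$ as a $G_F$-module. The Weil pairing then gives $\hat{E}[p]\simeq \mbb{F}_p(1)\otimes(\bar{E}[p])^{\vee}\simeq \mbb{F}_p(1)$. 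So $E[p]$ is an extension of $\mbb{F}_p$ by $\mbb{F}_p(1)$, and each condition (i)--(iii) will be translated into the statement that this extension splits.

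First, (ii)$\Rightarrow$(i) and (ii)$\Rightarrow$(iii) are immediate. The summand $\mbb{F}_p$ gives a nonzero $G_F$-fixed point. Also $F(E[p])=F(\mu_p)$, which is tamely ramified of degree $p-1$ since $F/\mbb{Q}_p$ is unramified.

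For (i)$\Rightarrow$(ii), I take a nonzero $P\in E(F)[p]$. It cannot lie in $\hat{E}[p]\simeq\mbb{F}_p(1)$, because $\mu_p\not\subset F$ for $p$ odd and $F$ unramified, so $G_F$ acts on $\mbb{F}_p(1)$ through a nontrivial character. Hence $P$ maps to a generator of $\bar{E}[p]$, the line $\mbb{F}_pP$ is a $G_F$-stable complement to $\hat{E}[p]$, and $E[p]\simeq\mbb{F}_p(1)\oplus\mbb{F}_p$.

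The remaining implication (iii)$\Rightarrow$(ii) is the main step. Choose a basis $e_1\in\hat{E}[p]$, $e_2$ lifting a generator of $\bar{E}[p]$. Then $G_F$ acts by matrices $\begin{pmatrix}\chi & b\\ 0 & 1\end{pmatrix}$, where $\chi$ is the mod $p$ cyclotomic character. If $E[p]$ is tame, the wild inertia $P_F$ acts trivially. The image of $G_F$ is then a quotient of $G_F/P_F$, and for $p$ odd the image of inertia is a quotient of the prime-to-$p$ tame inertia group. So the image of inertia has order prime to $p$, and it contains the $\chi$-part, which is cyclic of order $p-1$. Let $M=F(E[p])$. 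Then $\mathrm{Gal}(M/F)$ embeds in the Borel subgroup, and its intersection $N$ with the unipotent subgroup $\begin{pmatrix}1&*\\0&1\end{pmatrix}$ is normal. This $N$ is trivial on inertia, since inertia has prime-to-$p$ order, so $N$ maps injectively to the unramified quotient. On the other hand, conjugating by an inertia element $g$ with $\chi(g)=\zeta$ a generator of $\mbb{F}_p^\times$ scales $N$ by $\zeta\neq 1$, while $N$ commutes with $g$ modulo inertia because the unramified quotient is abelian and the image of $g$ there is trivial. These two facts force $N=1$. Then $\mathrm{Gal}(M/F)\to\mbb{F}_p^\times$, $\sigma\mapsto\chi(\sigma)$, is injective, so the image is abelian of order prime to $p$ and hence semisimple. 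Therefore the extension splits and (ii) holds.

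An alternative route, if the group theory becomes awkward, is to argue through Fontaine's bound (Theorem \ref{thm:BoundElliptic}) and Proposition \ref{prop:BreakDegreeP}. A nonsplit extension would make $M/F(\mu_p)$ a degree-$p$ Kummer extension, hence wildly ramified. But $M/F(\mu_p)$ could a priori be unramified; the commutator argument above is exactly what excludes this.
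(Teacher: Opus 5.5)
Your proof is correct. For (i)$\Rightarrow$(ii) you argue exactly as the paper does: a nonzero $F$-rational point cannot lie in $\hat{E}[p]\simeq\mathbb{F}_p(1)$, so it spans a $G_F$-stable complement. For the main implication (iii)$\Rightarrow$(ii), however, you take a more hands-on route.

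The paper argues cohomologically. Tameness makes the restriction of the extension class to $I_F$ vanish in $\mathrm{Ext}^1_{\mathbb{F}_p[I_F]}(\mathbb{F}_p,\mathbb{F}_p(1))$. Inflation--restriction then shows that $H^1(G_F,\mathbb{F}_p(1))\to H^1(I_F,\mathbb{F}_p(1))$ is injective, because its kernel $H^1(G_F/I_F,\mathbb{F}_p(1)^{I_F})$ vanishes, $I_F$ acting nontrivially on $\mu_p$.

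Your matrix argument is an explicit version of these same two inputs. The fact $N\cap I=1$, which holds because the inertia image has prime-to-$p$ order, plays the role of the vanishing of the restricted class. The commutator step, where conjugation by an inertia element with $\chi(g)=\zeta\neq 1$ scales $N$ by $\zeta$ while $g$ is trivial in the unramified quotient, plays the role of $\mathbb{F}_p(1)^{I_F}=0$.

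The paper's version is shorter and more conceptual. Yours avoids Ext groups entirely and makes explicit a step the paper leaves implicit, namely why tameness kills the class on $I_F$. It also gives $\mathrm{Gal}(F(E[p])/F)\simeq\mathbb{F}_p^\times$, i.e.\ $F(E[p])=F(\mu_p)$, as a by-product.

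Two small remarks. In the commutator step you do not need the unramified quotient to be abelian; it suffices that $g$ maps to $1$ there. Your closing ``alternative route'' paragraph can be dropped: as you note yourself, the Kummer/Fontaine argument alone does not exclude $F(E[p])/F(\mu_p)$ being unramified, so it adds nothing to the proof.
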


\begin{proof}
If $E(F)[p]$ is not zero, 
then $\hat{E}[p]\, (\simeq \mbb{F}_p(1))$ and $E(F)[p] \, (\simeq \mbb{F}_p)$
are non-isomorphic $G_F$-subrepresentations of $E[p]$,
which implies $E[p]=\hat{E}[p]\oplus E(F)[p]\simeq \mbb{F}_p(1)\oplus \mbb{F}_p$.
This shows (i)$\Rightarrow$ (ii).
The implications (ii)$\Rightarrow$ (i) and (ii)$\Rightarrow$ (iii) are clear.
It suffices to show  (iii)$\Rightarrow$ (ii). 
If $\bar{E}(\mbb{F})[p]\not=0$
and $E[p]$ is tame,  
the restriction to $I_F$ of the natural exact sequence 
\[
0\to \hat{E}[p]\to E[p]\to \bar{E}[p]\to 0
\] of 
$\mbb{F}_p[G_F]$-modules defines a trivial class in $\mrm{Ext}^1_{\mbb{F}_p[I_F]}(\mbb{F}_p,\mbb{F}_p(1))$.
On the other hand, the inflation-restriction exact sequence shows that 
the natural map $H^1(G_F,\mbb{F}_p(1))\to H^1(I_F,\mbb{F}_p(1))$ is injective,
which is equivalent to saying that the natural map 
\[
\mrm{Ext}^1_{\mbb{F}_p[G_F]}(\mbb{F}_p,\mbb{F}_p(1))\to \mrm{Ext}^1_{\mbb{F}_p[I_F]}(\mbb{F}_p,\mbb{F}_p(1))
\] 
is injective. Thus  the exact sequence above splits, which implies (ii).
\end{proof}

\begin{lemma}
\label{lem:red:def}
Let $E_{/\mbb{Q}_p}$ be an elliptic curve with good ordinary reduction. 
Let $\alpha$ be the non-unit root of $T^2-a_p(E)T+p=0$
and denote by $\chi_{\alpha}\colon G_{\mbb{Q}_p}\to \mbb{Z}_p^{\times}$
 the Lubin-Tate character associated with $\alpha$.
\begin{itemize}
\item[{\rm (1)}] The $G_{\mbb{Q}_p}$-action on the $p$-adic Tate module $V_p(\hat{E})$ of $\hat{E}$
is given by $\chi_{\alpha}$.
\item[{\rm (2)}] For any non-zero $P\in E[p]$ contained in the kernel of the reduction map $E[p]\to \bar{E}[p]$, 
we have 
\[
\mbb{Q}_p(P)=\mbb{Q}_p\left(\sqrt[p-1]{-a_p(E)^{-1}p}\right).
\]
\end{itemize}
\end{lemma}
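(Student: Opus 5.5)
For part (1) I would use the classification of one-dimensional formal groups of height one over $\mbb{Z}_p$. Since $E$ has good ordinary reduction, $\hat{E}$ is a formal group of height one over $\mbb{Z}_p$. Hence $T_p(\hat{E})$ is a free $\mbb{Z}_p$-module of rank one, and $G_{\mbb{Q}_p}$ acts on it through a character $\chi$. By Lubin--Tate theory, and more precisely Lubin's classification of height-one formal groups over $\mbb{Z}_p$, $\hat{E}$ is isomorphic over $\mbb{Z}_p^{\mrm{ur}}$ (completed) to the Lubin--Tate group attached to some uniformizer. The uniformizer is pinned down by Frobenius.

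The key input is the connected--étale sequence
\[
0\to T_p(\hat{E})\to T_p(E)\to T_p(\bar{E})\to 0 .
\]
Here $G_{\mbb{Q}_p}$ acts on $T_p(\bar{E})$ through an unramified character, with arithmetic Frobenius acting by the unit root $u$ of $T^2-a_p(E)T+p$. The determinant of $T_p(E)$ is the cyclotomic character $\chi_{\mrm{cyc}}$ (Weil pairing), so $\chi=\chi_{\mrm{cyc}}\cdot\eta^{-1}$, where $\eta$ is unramified with $\eta(\mrm{Frob})=u$. On the other hand, the Lubin--Tate character $\chi_\alpha$ of the uniformizer $\alpha=p/u$ equals $\chi_{\mrm{cyc}}$ times the unramified character sending Frobenius to $u^{-1}$. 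This uses the local class field theory description: $\chi_\pi$ restricted to inertia is $\chi_{\mrm{cyc}}$ up to an unramified twist, and $\chi_\pi(\mrm{rec}(\pi))=1$. Comparing the two expressions gives $\chi=\chi_\alpha$, which proves (1). The main point to check carefully is the normalization of Frobenius (arithmetic versus geometric, and $\mrm{rec}(\pi)$ versus its inverse). I would fix conventions using the case of $\mbb{G}_m$, where $\pi=p$ and the character is $\chi_{\mrm{cyc}}$.

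For (2), the point $P$ generates $\hat{E}[p]$. By (1), $G_{\mbb{Q}_p}$ acts on $\hat{E}[p]$ through $\chi_\alpha \bmod p$. Therefore $\mbb{Q}_p(P)$ is the fixed field of $\ker(\chi_\alpha\bmod p)$, which is the first-level Lubin--Tate extension $L_{\alpha,1}$. For $\pi$ a uniformizer, $L_{\pi,1}$ is generated by a nonzero root of $[\pi](X)=\pi X+X^p$, that is, $L_{\pi,1}=\mbb{Q}_p(\sqrt[p-1]{-\pi})$. With $\pi=\alpha$ this gives $\mbb{Q}_p(\sqrt[p-1]{-\alpha})$.

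Finally, $\alpha=p/u$ and $u\equiv a_p(E)\pmod p$, since $u$ is a root of $T^2-a_pT+p\equiv T(T-a_p)$. So $-\alpha=-a_p(E)^{-1}p\cdot w$ with $w\equiv1\pmod p$. Since $p-1$ is prime to $p$, Hensel's lemma shows that $w$ is a $(p-1)$-th power in $\mbb{Z}_p^\times$. Hence $\mbb{Q}_p(\sqrt[p-1]{-\alpha})=\mbb{Q}_p(\sqrt[p-1]{-a_p(E)^{-1}p})$. For $p=2$ both fields are $\mbb{Q}_2$ and the statement is trivial.
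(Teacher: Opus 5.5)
Your proof is correct. It follows a different route from the paper in (1), and it ends (2) more simply.

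\textbf{Part (1).} The paper uses $p$-adic Hodge theory. It takes $T^2-a_p(E)T+p$ as the characteristic polynomial of $\vphi$ on $D_{\mrm{cris}}(V_p(E)^{\vee})$ and factors it through the characters of $\hat{E}$ and $\bar{E}$. Conrad's formula $\det(T-\vphi\mid D_{\mrm{cris}}(\mbb{Q}_p(\chi^{-1})))=T-\chi(\pi)\pi$, plus a valuation argument, then gives $\chi(\pi)\pi=\alpha$, hence $\chi(\alpha)=1$. Since $\chi$ and $\chi_\alpha$ both agree with the cyclotomic character on inertia (which the paper takes as known), this forces $\chi=\chi_\alpha$.

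You instead read off the Frobenius value from the étale quotient: arithmetic Frobenius acts on $T_p(\bar{E})$ as the Frobenius endomorphism, hence by the unit root $u$. The Weil pairing then gives $\chi=\chi_{\mrm{cyc}}\eta^{-1}$. You compare this with $\chi_{p/u}=\chi_{\mrm{cyc}}$ times the unramified character sending $\mrm{Frob}\mapsto u^{-1}$. That formula is right with $\mrm{Frob}$ the arithmetic Frobenius, and your $\mbb{G}_m$ check will confirm it.

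Your route needs no comparison theorem, and it proves, rather than assumes, that $\chi$ is cyclotomic on inertia. The paper's formulation is the one that extends to general crystalline representations with known Frobenius eigenvalues.

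\textbf{Part (2).} You follow the paper up to $\mbb{Q}_p(\sqrt[p-1]{-\alpha})$. The paper then invokes Fontaine's ramification bound and the Krasner-type Proposition~\ref{prop:criteria}. You replace this with the observation that $a_p(E)/u\equiv 1\pmod p$ is a $(p-1)$-th power in $\mbb{Z}_p^{\times}$ by Hensel's lemma, which is simpler and equally rigorous.

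\textbf{A small correction.} Your aside about Lubin's classification is unnecessary and slightly off as stated. Over the completion of $\mbb{Z}_p^{\mrm{ur}}$, $\hat{E}$ and every Lubin--Tate group over $\mbb{Z}_p$ become isomorphic to $\hat{\mbb{G}}_m$, so no uniformizer is pinned down there; it is pinned down only over $\mbb{Z}_p$ itself. Your character comparison does not rely on this.
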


\begin{proof}
(1) Let $\chi\colon G_{\mbb{Q}_p}\to \mbb{Z}_p^{\times}$ be the character obtained by the 
$G_{\mbb{Q}_p}$-action on $V_p(\hat{E})$
and $\phi\colon G_{\mbb{Q}_p}\to \mbb{Z}_p^{\times}$ be the character obtained by the 
$G_{\mbb{Q}_p}$-action on $V_p(\bar{E})$.
Put $T^2-a_p(E)T+p=f_E(T)$.
For any crystalline $\mbb{Q}_p$-representation $V$ of $G_{\mbb{Q}_p}$,
let $D_{\mrm{cris}}(V)=(B_{\mrm{cris}}\otimes_{\mbb{Q}_p} V)^{G_{\mbb{Q}_p}}$ 
be Fontaine's filtered $\vphi$-module\footnote{For the basic notion of $p$-adic Hodge theory, it is helpful for the reader to refer to \cite{Fon94a} and \cite{Fon94b}.}.
By $p$-adic Hodge theory, it is known that $f_E(T)$ coincides with the characteristic polynomial of 
the $\vphi$-module  $D_{\mrm{cris}}(V_p(E)^{\vee})$, that is, 
\[
f_E(T)=\mrm{det}(T-\vphi\mid D_{\mrm{cris}}(V_p(E)^{\vee})).
\] 
Here, $\vee$ stands for the dual representation.
Moreover, this characteristic polynomial is the product of the characteristic polynomials of 
$D_{\mrm{cris}}(\mbb{Q}_p(\chi^{-1}))$
and $D_{\mrm{cris}}(\mbb{Q}_p(\phi^{-1}))$.
Since the restriction of $\chi$ to the inertia group $I_{\mbb{Q}_p}$ coincides with the $p$-adic cyclotomic character,
for any choice of a uniformizer $\pi$ of $\mbb{Q}_p$, 
it follows from \cite[Proposition B.4]{Con11} that 
\[
\mrm{det}(T-\vphi\mid D_{\mrm{cris}}(\mbb{Q}_p(\chi^{-1})))
=T-\chi(\pi)\cdot \pi,
\]
which is independent of the choice of $\pi$
(here, we regard $\chi$ as a character of $\mbb{Q}_p^{\times}$
via the local reciprocity map $\mbb{Q}_p^{\times}\to 
\mrm{Gal}(\mbb{Q}_p^{\mrm{ab}}/\mbb{Q}_p)$).
Since $\chi(\pi)\cdot \pi$ has a positive $p$-adic valuation, we have 
$\chi(\pi)\cdot \pi=\alpha$.
By choosing  $\alpha$ as $\pi$, 
we have $\chi(\alpha)=1$. 
Since we have $\chi=\chi_{\alpha}$ on $I_{\mbb{Q}_p}$,
we find $\chi=\chi_{\alpha}$.

(2) If $p=2$, then $P$ is defined over $\mbb{Q}_2$
and thus the assertion follows.
Assume $p\ge 3$. Let $\beta$ be the unit root of $T^2-a_p(E)T+p=0$.
Let $\mcal{F}$ be  the Lubin-Tate formal group over $\mbb{Z}_p$ associated  with $\alpha$. 
By (1),  we have 
\[
\mbb{Q}_p(P)=\mbb{Q}_p(\hat{E}[p])=\mbb{Q}_p(\mcal{F}[\alpha])
=\mbb{Q}_p(x; x^p+\alpha x=0)
=\mbb{Q}_p\left(\sqrt[p-1]{-\beta^{-1}p}\right).
\]
Note that we have 
\[
u_{\mbb{Q}_p(\hat{E}[p])/\mbb{Q}_p}\le u_{\mbb{Q}_p(E[p])/\mbb{Q}_p}\le 1+\frac{1}{p-1}<2
\]
by Corollary \ref{cor:BreakBound} and $p\ge 3$.
We also note that $\beta^{-1}p\equiv a_p(E)^{-1}p\! \mod p^2$.
Therefore, the result follows
from Proposition \ref{prop:criteria}.
\end{proof}

Now we are ready to prove Theorems \ref{ord:tame}, \ref{ord:wild} and \ref{ss}.


\vspace{5mm}
\noindent
{\bf Good ordinary reduction -- Tame case:}\quad   
First we  give a proof of Theorem \ref{ord:tame}.
Thus we consider the case where $p\ge 3$, 
$E_{/\mbb{Q}_p}$ has good ordinary reduction 
and the representation $E[p]$ is tame.

\begin{proof}[Proof of Theorem \ref{ord:tame}]
(1) is an immediate consequence of Lemma \ref{lem:tameness}. We show (2).
Let $\ve\colon G_{\mbb{Q}_p}\to \mbb{F}_p^{\times}$ be the mod $p$ cyclotomic character,
$\chi\colon G_{\mbb{Q}_p}\to \mbb{F}_p^{\times}$ 
the character obtained by the $G_{\mbb{Q}_p}$-action on $\hat{E}[p]$
and $\psi\colon G_{\mbb{Q}_p}\to \mbb{F}_p^{\times}$ 
the character obtained by the $G_{\mbb{Q}_p}$-action on $\bar{E}[p]$.
The character $\psi$ is the unramified character of order $f$. 
Since $\hat{E}[p]\, (\simeq  \mbb{F}_p(\chi)=\mbb{F}_p(\ve \psi^{-1}))$ 
and $E(F)[p]\, (\simeq \mbb{F}_p(\psi))$ are non-isomorphic 
$\mbb{F}_p[G_{\mbb{Q}_p}]$-submodules of $E[p]$, 
we have 
\[
E[p]= \hat{E}[p]\oplus E(F)[p]
\] as 
$\mbb{F}_p[G_{\mbb{Q}_p}]$-modules.
Let $P$ be a non-zero point of $E[p]$.
\begin{itemize}
\item[(i)] Consider the case $P\in E(F)[p]$. For $\sigma\in G_{\mbb{Q}_p}$, 
we have $\sigma P=P$ 
$\Leftrightarrow$ $\psi(\sigma)=1$ 
$\Leftrightarrow$ $\sigma \in G_F$.
This shows $\mbb{Q}_p(P)=F$.

\item[(ii)] Consider the case $P\in \hat{E}[p]$;
this case is treated in Lemma \ref{lem:red:def}.

\item[(iii)] Consider the other cases. In this case, we have $P=Q+R$
for some non-zero $Q\in \hat{E}[p]$ and some non-zero $R\in E(F)[p]$.
For $\sigma\in G_{\mbb{Q}_p}$, 
we have $\sigma P=P$ 
$\Leftrightarrow$ $\chi(\sigma)=\psi(\sigma)=1$ 
$\Leftrightarrow$ $\ve(\sigma)=\psi(\sigma)=1$ 
$\Leftrightarrow$ $\sigma \in G_{F(\mu_p)}$.
This shows $\mbb{Q}_p(P)=F(\mu_p)$.
\end{itemize}
\end{proof}


\vspace{5mm}
\noindent
{\bf Good ordinary reduction -- Wild case:}\quad   
Next we give a proof of Theorem \ref{ord:wild}.
Thus we consider the case where $p\ge 3$, $E_{/\mbb{Q}_p}$ has good ordinary reduction 
and the representation $E[p]$ is wild.

\begin{proof}[Proof of Theorem \ref{ord:wild}]
Take a basis of $E[p]$ given by a non-zero element of $\hat{E}[p]$ 
and a lift of a non-zero point of $\bar{E}[p]$,
and 
let $\rho\colon G_{\mbb{Q}_p}\to GL_{\mbb{F}_p}(E[p])\simeq GL_2(\mbb{F}_p)$
be the representation defined by the $G_{\mbb{Q}_p}$-action on $E[p]$
with respect to the chosen basis of $E[p]$.
Since $E[p]$ is wild by Lemma \ref{lem:tameness},
it follows from \cite[Proposition 13, Corollaire]{Ser72} that
$\rho(I_{\mbb{Q}_p})=\rho(I_F)
=\begin{pmatrix}
\mbb{F}_p^{\times} & \mbb{F}_p \\
0 & 1
\end{pmatrix}$.
Since $G_F$ acts trivially on $\bar{E}[p]$,
$\rho(G_F)$ is contained in 
$\begin{pmatrix}
\mbb{F}_p^{\times} & \mbb{F}_p \\
0 & 1
\end{pmatrix}$
 and thus 
we see $\rho(G_F)=\rho(I_F)$. Hence we obtain that the extension 
$\mbb{Q}_p(E[p])/\mbb{Q}_p$
is of  degree $fp(p-1)$, and its ramification index and residue degree are  
$p(p-1)$ and $f$, respectively. Moreover, if we denote by $C_f$ the 
subgroup of $\mbb{F}_p^{\times}$ of order $f$, 
we find the following.
\[
\displaystyle \xymatrix{
\mrm{Gal}(\mbb{Q}_p(E[p])/\mbb{Q}_p(\mu_p))
\ar@{}[r]|*{\subset}
\ar[d]_{\wr} \ar^{\rho}[d]
&
\mrm{Gal}(\mbb{Q}_p(E[p])/\mbb{Q}_p)
\ar@{}[r]|*{\supset}
\ar[d]_{\wr} \ar^{\rho}[d]
&
\mrm{Gal}(\mbb{Q}_p(E[p])/F)
\ar[d]_{\wr} \ar^{\rho}[d]
\\
G_f 
\ar@{}[r]|*{\subset} 
& 
G 
\ar@{}[r]|*{\supset} 
& 
H
}
\]
Here, 
\[
G_f= \left\{ 
\begin{pmatrix}
x & \ast \\
0 & x^{-1} 
\end{pmatrix}
\mid x\in C_f 
\right\},\  
G=\begin{pmatrix}
\mbb{F}_p^{\times} & \mbb{F}_p \\
0 & C_f
\end{pmatrix},\  
H=\begin{pmatrix}
\mbb{F}_p^{\times} & \mbb{F}_p \\
0 & 1
\end{pmatrix}.
\]
In the rest of the proof,
we identify $\mrm{Gal}(\mbb{Q}_p(E[p])/\mbb{Q}_p)=G$ via $\rho$.
Under this identification, we have 
 $\mrm{Gal}(\mbb{Q}_p(E[p])/\mbb{Q}_p(\mu_p))=G_f$ and $\mrm{Gal}(\mbb{Q}_p(E[p])/F)=H$. 
Now we need the following elementary lemma.
\begin{lemma}
\label{group}
All the subgroups of $G$ of order $f(p-1)$
are of the form
\[
H_c=\left\{ 
\begin{pmatrix}
x & c(x-y) \\
0 & y 
\end{pmatrix}
\mid x\in \mbb{F}_p^{\times},\ y\in C_f  
\right\}
\]
where $c\in \mbb{F}_p$.
These subgroups are mutually conjugate. 
\end{lemma}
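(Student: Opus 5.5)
The group $G=\begin{pmatrix}\mbb{F}_p^{\times}&\mbb{F}_p\\0&C_f\end{pmatrix}$ has order $p(p-1)f$. Its subgroup $N=\begin{pmatrix}1&\mbb{F}_p\\0&1\end{pmatrix}$ is normal of order $p$, and the quotient $G/N\simeq \mbb{F}_p^{\times}\times C_f$ is abelian. The plan is to treat the subgroups of order $f(p-1)$ as complements of $N$ via Sylow/Schur--Zassenhaus, and then to identify each complement explicitly.

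\textbf{Existence and complements.} Since $f\mid p-1$, the order $f(p-1)$ is prime to $p$. So a subgroup $H'$ of order $f(p-1)$ satisfies $H'\cap N=1$, and $H'N=G$ by counting. Hence $H'$ maps isomorphically onto $G/N$ and is a complement to the normal Sylow $p$-subgroup $N$. By Schur--Zassenhaus, all such complements are conjugate in $G$, which gives the second assertion once we know $H_0=\mrm{diag}(\mbb{F}_p^{\times},C_f)$ is one of them. It is, since it has order $(p-1)f$.

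\textbf{Explicit form.} For $c\in\mbb{F}_p$, conjugate $H_0$ by $u_c=\begin{pmatrix}1&c\\0&1\end{pmatrix}$:
\[
u_c\begin{pmatrix}x&0\\0&y\end{pmatrix}u_c^{-1}=\begin{pmatrix}x&c(y-x)\\0&y\end{pmatrix}.
\]
Replacing $c$ by $-c$, this is exactly $H_c$. In particular each $H_c$ is a subgroup of order $f(p-1)$.

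\textbf{Every complement is some $H_c$.} Every conjugate of $H_0$ has the form $gH_0g^{-1}$. Writing $g=u\,d$ with $u\in N$ and $d$ diagonal, and using that $d$ normalizes $H_0$ (diagonal matrices commute), we get $gH_0g^{-1}=uH_0u^{-1}$. This is some $H_c$. So the subgroups of order $f(p-1)$ are exactly the $H_c$.

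\textbf{Counting.} These are $p$ distinct subgroups, because $H_c$ contains $\begin{pmatrix}x&c(x-1)\\0&1\end{pmatrix}$ and $x\ne1$ exists when $p\ge3$, which determines $c$. This matches the normalizer count $[G:N_G(H_0)]=p$.

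If one prefers to avoid Schur--Zassenhaus, a direct argument also works. Take an element $h$ of $H'$ of order $p-1$ whose diagonal is $(x_0,y_0)$ with $x_0\neq y_0$; one exists because $H'\to G/N$ is an isomorphism, so we can pick $x_0$ a generator and $y_0=1$. Then $h$ is diagonalizable over $\mbb{F}_p$ with an eigenvector $(c',1)$, and conjugating by $u_{c'}$ puts $h$ into diagonal form. The centralizer computation then forces all of $H'$ to be diagonal, since $H'$ is abelian.

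The only real obstacle is this uniqueness step, namely that every subgroup of order $f(p-1)$ is conjugate into the diagonal torus. Schur--Zassenhaus, or coprimality together with the fact that $H^1(G/N,N)=0$, handles it immediately. Everything else is a matrix computation.
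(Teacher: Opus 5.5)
Your proof is correct, but your main argument takes a different route from the paper's.

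\textbf{Your route.} You argue structurally. A subgroup of order $f(p-1)$ is a complement to the normal Sylow $p$-subgroup $N$, so by Schur--Zassenhaus it is conjugate to $H_0$. Writing $g=ud$ with $u\in N$ and $d\in H_0$ then reduces every conjugate of $H_0$ to $u_cH_0u_c^{-1}=H_{-c}$.

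\textbf{The paper's route.} The paper does not invoke Schur--Zassenhaus here and argues elementwise instead. Injectivity of $H\to\mbb{F}_p^{\times}\times C_f$ shows that $H$ is abelian. If $H\neq H_0$, pick $\sigma_0\in H$ with upper-right entry $u_0\neq0$. Then $x_0\neq y_0$, since otherwise $\sigma_0^{f(p-1)}$ would be a nontrivial unipotent matrix. The single relation $\sigma\sigma_0=\sigma_0\sigma$ then forces $u=c(x-y)$ with $c=u_0(x_0-y_0)^{-1}$, so $H=H_c$. Conjugacy follows from the explicit identity $H_c=g_cH_0g_c^{-1}$.

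Your fallback ``direct argument'' is essentially the paper's proof in different clothing. Diagonalizing an element with distinct diagonal entries and noting that its centralizer is diagonal is the same commutation computation.

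\textbf{Trade-off.} The paper's version is self-contained and elementary. Yours is quicker once Schur--Zassenhaus is granted, and it makes conjugacy conceptual rather than computational. Your proof also checks explicitly that the $H_c$ are pairwise distinct, using the element with diagonal $(x,1)$ and $x\neq1$. The paper relies on this immediately afterwards, when it asserts there are exactly $p$ degree-$p$ subextensions, but it leaves that fact implicit.
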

\begin{proof}
It suffices to show that any subgroup of $G$ of order $f(p-1)$
is of the form $H_c$ for some $c$.
Let $H$ be a subgroup of $G$ of order $f(p-1)$.
Let $\vphi\colon G\to \mbb{F}_p^{\times}\times C_f$ be the map defined by 
$\begin{pmatrix}
x & \ast \\
0 & y 
\end{pmatrix}
\mapsto (x,y)$.
Since $f(p-1)$ is prime to the order $p$ of the kernel of $\varphi$,
it follows that $\vphi$ restricted to $H$ is injective. 
Thus $\vphi$ induces $H \simeq \mbb{F}_p^{\times}\times C_f$.
In particular, $H$ is abelian. Now we suppose $H\not=H_0$. 
Then there exists an element $\sigma_0\in H$ of the form 
$\sigma_0=
\begin{pmatrix}
x_0 & u_0 \\
0 & y_0 
\end{pmatrix}
$
with  $u_0\not=0$.
Note that $\sigma_0^{f(p-1)}$ must be the identity matrix 
since $H$ is of order $f(p-1)$.
If $x_0=y_0$, then it holds
\[
\begin{pmatrix}
1 & 0 \\
0 & 1 
\end{pmatrix}
= \sigma_0^{f(p-1)}
=x_0^{f(p-1)}
\begin{pmatrix}
1 & x_0^{-1}u_0 \\
0 & 1 
\end{pmatrix}^{f(p-1)}
=x_0^{f(p-1)}
\begin{pmatrix}
1 & x_0^{-1}f(p-1)u_0 \\
0 & 1 
\end{pmatrix}
\]
but this contradicts $u_0\not=0$.
Hence we obtain $x_0\not=y_0$.
Take any element 
$\sigma=
\begin{pmatrix}
x & u \\
0 & y 
\end{pmatrix}
\in H$.
Since $H$ is abelian, the equation $\sigma \sigma_0=\sigma_0\sigma$
gives 
$xu_0+y_0u=x_0u+yu_0$. Thus we obtain $u=c(x-y)$ where 
$c=u_0(x_0-y_0)^{-1}$. Therefore, we have $H=H_c$.
Finally, for $g_c=\begin{pmatrix}1&-c\\0&1\end{pmatrix}$, we have 
$H_c=g_cH_0g_c^{-1}$. Hence the subgroups $H_c$ are mutually conjugate.
\end{proof}
Now we return to the proof of Theorem \ref{ord:wild}.
By the lemma above,
we obtain the fact that 
the field extension 
$\mbb{Q}_p(E[p])/\mbb{Q}_p$ has exactly $p$ subextensions
of degree $p$, all of which are conjugate over $\mbb{Q}_p$.
Let $K$ be one such subextension. Proposition \ref{prop:BreakDegreeP} gives 
an inequality $u_{K/\mbb{Q}_p} \geq 1+\frac{1}{p-1}$ 
for the maximal ramification breaks. 
In addition,  
Corollary \ref{cor:GaloisClosure} and \ref{cor:BreakBound} show the inequalities 
\[
u_{K/\mbb{Q}_p}\le u_{\mbb{Q}_p(E[p])/\mbb{Q}_p}\le 1+\frac{1}{p-1}.
\]
Thus, we have $u_{K/\mbb{Q}_p}=1+\frac{1}{p-1}$. 
According to Proposition \ref{prop:BreakDegreeP}, $K$ must be 
of type $\langle \lambda,m,\omega \rangle$ with $\lambda=m=1$. 
By Corollary \ref{cor:ClassificationLambda}, we find that 
$K$ is isomorphic to 
\[
\mbb{Q}_p[X]/(X^p+apX+p)
\]
for some $a\in \{1,2,\dots ,p-1\}$.  
Note that the constant $a$ is uniquely determined by 
Corollary \ref{cor:ClassificationLambda}. 
Furthermore, it follows from Proposition \ref{prop:GaloisCriteria} that 
the Galois closure $\widetilde{K}$ of $K/\mbb{Q}_p$
is 
\[
\widetilde{K}=K(\sqrt[p-1]{-ap}).
\]
Since $\widetilde{K}$ is totally ramified of degree $p(p-1)$, 
we have $\mbb{Q}_p(E[p])=F\widetilde{K}=FK(\sqrt[p-1]{-ap})$.
On the other hand, $G$ has a normal subgroup
$
\begin{pmatrix}
1 & \mbb{F}_p \\
0 & 1 
\end{pmatrix}
$
of order $p$. By the Sylow's theorem,
this is the unique subgroup of $G$ of order $p$.
Hence $\mbb{Q}_p(E[p])/\mbb{Q}_p$ has exactly one subextension of degree $f(p-1)$.
Combining this with $\mbb{Q}_p(E[p])=FK(\sqrt[p-1]{-ap})$, we have 
$F(\mu_p)=F(\sqrt[p-1]{-ap})$. In particular, we have
 $\mbb{Q}_p(E[p])=FK(\mu_p)$.

Therefore, for the proof of (1) and (2),
it is enough to show $a\equiv a_p(E)^{-2} \ \mrm{mod}\ p$. 
Now we regard $\rho$ as a homomorphism from $G=\mathrm{Gal}(\mbb{Q}_p(E[p])/\mbb{Q}_p)$ 
to $GL_2(\mbb{F}_p)$.
For a certain choice of the basis of $E[p]$, we know that 
$\rho$ 
is of the form  
\[
\rho=\begin{pmatrix}
\ve \chi^{-1}  & u \\ 
0 & \chi
\end{pmatrix}
\]
where $\ve$ is the mod $p$ cyclotomic character, 
$\chi$ is the unramified character obtained by the $G$-action on $\bar{E}[p]$ 
and $u\colon G\to \mbb{F}_p$ is a map. 
Take any $\sigma_0\in G_f$ 
with the property that it induces the $p$-th power Frobenius map 
on the residue field of $\mbb{Q}_p(E[p])$.
Since $\chi(\sigma_0)=a_p(E) \ \mrm{mod}\ p$,
we have 
$\rho(\sigma_0)=\begin{pmatrix}
a_p(E)^{-1}  & u(\sigma_0) \\ 
0 & a_p(E)
\end{pmatrix}$.
Let us denote by $W$ the wild inertia subgroup of $G$.
Note that 
$W=\mrm{Gal}(\mbb{Q}_p(E[p])/F(\mu_p))=\begin{pmatrix}
1  & \mbb{F}_p \\ 
0 & 1
\end{pmatrix}$ 
via $\rho$.
One sees immediately that $u\colon W\to \mbb{F}_p$
is an isomorphism and 
\begin{equation}
\label{map:u}
u(\sigma_0 \tau \sigma_0^{-1})=a_p(E)^{-2}u(\tau)
\end{equation}
for any $\tau\in W$.
Below, from the viewpoint of ramification theory, 
let us  construct an isomorphism $u'\colon W\to \mbb{F}_p$
satisfying 
\begin{equation}
\label{map:u'}
u'(\sigma_0 \tau \sigma_0^{-1})=a u'(\tau)
\end{equation}
for any $\tau\in W$.
Since both $u$ and $u'$ are isomorphisms from $W$ to $\mbb{F}_p$,
there exists $\lambda\in \mbb{F}_p^{\times}$ such that $u'=\lambda u$.
Comparing this identity with \eqref{map:u} and \eqref{map:u'}, 
we see that the existence of such a map $u'$ implies 
$a\equiv a_p(E)^{-2} \ \mrm{mod}\ p$.
Hence it suffices to construct $u'$ as above.

Let us explain how to construct $u'$ as above.
Take a root $\pi$ of $X^p+apX+p=0$. We may suppose $K=\mbb{Q}_p(\pi)$. 
Recall that the Galois closure $\widetilde{K}$ of $K/\mbb{Q}_p$ is $K(\gamma)$
with $\gamma$ satisfying $\gamma^{p-1}=-ap$. For any non-trivial element $\tau\in W$,
we put 
\[
u'_{\tau}=\frac{\tau(\pi)-\pi}{\gamma}.
\]
We denote by $\mathcal{O}$ the ring of integers of $\mbb{Q}_p(E[p])$ 
and $\mbf{m}$ the maximal ideal of $\mathcal{O}$.
We claim that $u'_{\tau}$ is a unit of $\mathcal{O}$
and 
\[
\bar{u}'_{\tau}:=u'_{\tau}\ \mrm{mod}\ \mbf{m}
\] 
is contained in $\mbb{F}^{\times}_p$.
Put $\delta_{\tau}=\tau(\pi)-\pi$.
Since $F(\mu_p)\cap K=\mbb{Q}_p$, 
it holds that $\delta_{\tau}\not=0$.
Since $\pi$ and $\pi+\delta_{\tau}$ are roots of $X^p+apX+p=0$,
we have 
\begin{equation}
\label{delta}
\delta_{\tau}^{p-1}+\sum^{p-1}_{i=1}\binom{p}{i}\pi^{p-i}\delta_{\tau}^{i-1}+ap=0.
\end{equation}
We see $v_p(\delta_{\tau}^{p-1})=v_p(ap)=1$.
Thus we have $v_p(u'_{\tau})=v_p(\delta_{\tau})-v_p(\gamma)=0$, which shows 
$u'_{\tau}\in \mathcal{O}^{\times}$.
Since we have 
$v_p(\delta_{\tau}^{p-1}-\gamma^{p-1})>1$ by \eqref{delta}, 
we find $\left(\frac{\delta_{\tau}}{\gamma}\right)^{p-1}\equiv 1\ \mrm{mod}\ \mbf{m}$.
Thus we have $\bar{u}'_{\tau}\in \mbb{F}^{\times}_p$.
Hence the claim follows.

By the claim, we obtain the map
\[
u'\colon W\to \mbb{F}_p
\]
defined by $u'(\tau)=\bar{u}'_{\tau}$. Note that 
$u'(1)=0$ and $u'(\tau)\not=0$ for any non-trivial element $\tau\in W$.
To check that this $u'$ satisfies the desired property,
it is enough to show that $u'$ is a homomorphism and also it satisfies \eqref{map:u'}.
Take any $\tau_1,\tau_2\in W$. Since $W$ fixes $\gamma$, we have 
\[
u'_{\tau_1\tau_2}=u'_{\tau_1}+\tau_1(u'_{\tau_2}).
\]
Since $W$ is contained in the inertia subgroup of $G$,
the above equation gives $\bar{u}'_{\tau_1\tau_2}=\bar{u}'_{\tau_1}+\bar{u}'_{\tau_2}$.
We compute $u'(\sigma_0\tau\sigma_0^{-1})$. 
Take $\eta$ such that $\eta^{p-1}=-p$
and set $\alpha:=\gamma \eta^{-1}$.
Note that we have $\alpha^{p-1}=a$ and $\mbb{Q}_p(\mu_p)=\mbb{Q}_p(\eta)$.
We denote by $\mbf{m}^{>1/(p-1)}$ the ideal of $\mathcal{O}$
consisting of all elements $x$ of $\mathcal{O}$ such that $v_p(x)>\frac{1}{p-1}$.
Since $\sigma_0(\eta)=\eta$, we have  
\begin{equation}
\label{sigma0}
\sigma_0(\gamma)=\sigma_0(\alpha)\eta\equiv \alpha^p\eta\equiv a\gamma \ \mrm{mod}\ \mbf{m}^{>1/(p-1)}
\end{equation}
Put $\pi_0=\sigma_0^{-1}(\pi)$.
This is a root of $X^p+apX+p=0$.
Since $W$ acts transitively on the set of roots of this polynomial,
we have $\pi_0=\tau_0(\pi)$ for some $\tau_0\in W$.
Using the fact that $W$ is abelian, we obtain
\begin{align*}
\sigma_0\tau\sigma_0^{-1}(\pi)-\pi
=\sigma_0(\tau(\pi_0)-\pi_0)
=\sigma_0(\tau\tau_0(\pi)-\tau_0(\pi))
=\sigma_0\tau_0(\tau(\pi)-\pi).
\end{align*}
If we denote by $[x]\in \mbb{Z}_p^{\times}$ the Teichm\"uller lift of 
$x\in \mbb{F}_p^{\times}$, we have 
\begin{align*}
\sigma_0\tau\sigma_0^{-1}(\pi)-\pi
& \equiv \sigma_0\tau_0([\bar{u}'_{\tau}]\gamma)\ \mrm{mod}\ \mbf{m}^{>1/(p-1)}\\
& \equiv \sigma_0([\bar{u}'_{\tau}])\sigma_0(\gamma)\ \mrm{mod}\ \mbf{m}^{>1/(p-1)}\\
& \equiv a[\bar{u}'_{\tau}]\gamma\ \mrm{mod}\ \mbf{m}^{>1/(p-1)}
\end{align*}
Here we used the facts that $\tau_0(\bar{u}'_{\tau})=\bar{u}'_{\tau}$ 
and $\tau_0(\gamma)=\gamma$ for the second congruence,
and also used \eqref{sigma0} for the last congruence.
Dividing the above equation by $\gamma$,
we obtain \eqref{map:u'} as desired. Consequently, 
$a\equiv a_p(E)^{-2}\pmod p$.
Let $f_a(X)=X^p+apX+p$ and $f_E(X)=X^p+a_p(E)^{-2}pX+p$. 
Proposition~\ref{prop:ultrametric} gives 
$v_p(f_a,f_E)\geq 2+\frac{1}{p}>\frac{p}{p-1}=u_{K/\mbb{Q}_p}$.
Proposition~\ref{prop:criteria} therefore shows that 
$K\simeq\mbb{Q}_p[X]/(f_E)$.
This finishes the proofs of assertions (1) and (2) of the theorem.

\medskip
It remains to prove assertions (3) and (4).
Let $P$ be a non-zero point of $E[p]$ with $P\notin \hat{E}[p]$. 
We claim that 
\[
\mbb{Q}_p(P)=FK
\]
for some 
subextension $K$ of $\mbb{Q}_p(E[p])/\mbb{Q}_p$ of degree $p$.
(Note that such $K$ must be as in the assertion (1).)
Take a non-zero point $Q$ of $\hat{E}[p]$. 
Then $\{Q,P\}$ forms a basis of $E[p]$.
We may suppose that 
$\rho
\colon G_{\mbb{Q}_p}\to GL_2(\mbb{F}_p)$ 
is defined with respect to this basis. If we write  
$\chi\colon G_{\mbb{Q}_p}\to \mbb{F}_p^{\times}$ 
the character obtained by the $G_{\mbb{Q}_p}$-action on $\bar{E}[p]$,  
we have 
$\rho=\begin{pmatrix}
\ve \chi^{-1}  & u \\ 
0 & \chi
\end{pmatrix}$
for some map $u\colon G_{\mbb{Q}_p}\to \mbb{F}_p$.
Then, $\sigma\in G$  is an element of 
$\mrm{Gal}(\mbb{Q}_p(E[p])/\mbb{Q}_p(P))$
if and only if $\chi(\sigma)=1$ and $u(\sigma)=0$.
Thus 
\[
\mrm{Gal}(\mbb{Q}_p(E[p])/\mbb{Q}_p(P))
=\begin{pmatrix}
\mbb{F}_p^{\times}  & 0 \\ 
0 & C_f
\end{pmatrix}\cap 
\begin{pmatrix}
\mbb{F}_p^{\times}  & \mbb{F}_p \\ 
0 & 1
\end{pmatrix}.
\]
Taking a subextension $K$ of $\mbb{Q}_p(E[p])/\mbb{Q}_p$ 
so that $\mrm{Gal}(\mbb{Q}_p(E[p])/K)=\begin{pmatrix}
\mbb{F}_p^{\times}  & 0 \\ 
0 & C_f
\end{pmatrix}$,
the claim follows.
For each $\sigma\in \Gamma_K$, let $S_{\sigma}$ 
be the set obtained by removing zero from the submodule of $E[p]$
generated by $\sigma P$.
We also set $\hat{S}:=\hat{E}[p]\smallsetminus \{0\}$.
It is straightforward to verify the following two properties:
\begin{itemize}
\item[(a)] $\mbb{Q}_p(Q)=F\cdot \sigma K$ for each $Q\in S_{\sigma}$
and $\mbb{Q}_p(Q)=\mbb{Q}_p(\sqrt[p-1]{-a_p(E)^{-1}p})$ 
for each $Q\in \hat{S}$ (see Lemma \ref{lem:red:def}).
\item[(b)] $F\cdot \sigma K\not=F\cdot \tau K$ for $\sigma\not=\tau$ 
in $\Gamma_K$.
\end{itemize}
By (a) and (b), the intersection $S_{\sigma}\cap S_{\tau}$ is empty for $\sigma\not=\tau$ 
in $\Gamma_K$, and similarly, 
$S_{\sigma}\cap \hat{S}$ is empty for $\sigma\in \Gamma_K$.
Since the order of $\hat{S}$ and each $S_{\sigma}$ is $p-1$, 
we find the following decomposition of $E[p]$ by disjoint unions: 
\[
E[p]=\{0\}\sqcup \hat{S} \sqcup \left( \sqcup_{\sigma\in \Gamma_K} S_{\sigma} \right).
\]
Hence we find that $E(FK)[p]$ is generated by $P$. 
In fact, if there exists a non-zero point $Q$ of $E(FK)[p]$ 
which is not generated by $P$, then $Q$ must be contained in $\hat{S}$ or  $S_{\sigma}$ for some $\sigma\not=1$ but 
this contradicts the facts that 
$FK\cap \mbb{Q}_p(\hat{E}[p])=\mbb{Q}_p\subset F$, 
$FK\cap F\cdot \sigma K=F$ for $\sigma\not=1$ and $E(F)[p]=0$.
Therefore, $P$ generates $E(FK)[p]$, and the reduction map induces
an isomorphism
\[
E(FK)[p]\xrightarrow{\sim}\bar{E}[p].
\]
Now assertions (3) and (4) follow from the arguments above.
\end{proof}


\vspace{5mm}
\noindent
{\bf Good ordinary reduction -- $p=2$:}\quad   
\begin{proof}[Proof of Theorem \ref{ord:p=2}]
Put $L=\mbb{Q}_2(E[2])$.
Since $E$ has good ordinary  reduction, 
the Galois group $\mrm{Gal}(L/\mbb{Q}_2)$
is isomorphic to a subgroup conjugate to
$\begin{pmatrix}
1 & \mbb{F}_2 \\
0 & 1
\end{pmatrix}$.
In particular, the degree of $L$ over $\mbb{Q}_2$ is at most $2$.
On the other hand, we have $u_{L/\mbb{Q}_2}\le 2$ by 
Corollary~\ref{cor:BreakBound}. If $L$ is ramified over $\mbb{Q}_2$, 
then $u_{L/\mbb{Q}_2}=2$ by the Hasse--Arf theorem. 
There are precisely two such extensions $L_1$ and $L_2$ in 
Table~\ref{table:degree2}.

Suppose that $[L:\mbb{Q}_2]=2$. The non-trivial element of 
$\mrm{Gal}(L/\mbb{Q}_2)$ acts faithfully on the three non-zero points 
of $E[2]$, and hence acts as a transposition. It therefore fixes exactly 
one non-zero point and interchanges the other two. Together with the 
origin, exactly two elements of $E[2]$ are defined over $\mbb{Q}_2$, 
whereas the other two generate $L$. This proves the theorem.
\end{proof}

\vspace{5mm}
\noindent
{\bf Good supersingular reduction:}\quad   
Finally we show Theorem \ref{ss}.
Thus we consider the case where $E_{/\mbb{Q}_p}$ has good supersingular reduction.

\begin{proof}[Proof of Theorem \ref{ss}] 
(1) It is known that, over  $\mathcal{O}_F$,  
the formal group $\hat{E}$ is isomorphic to the Lubin-Tate formal group $\mcal{F}_{/\mcal{O}_F}$ 
associated with the uniformizer $-p$. 
Kobayashi showed this fact in  \cite[Corollary 8.5 and Proposition 8.6]{Kob03}
under the assumption that $p$ is odd and 
$a_p(E)=0$.
Pollack pointed out in the proof of Theorem 3.1 \cite{Pol05} 
that we can remove the assumption $a_p(E)=0$. 
Both Kobayashi and Pollack
assumed that $p$ is odd, but Honda's theory works also for $p=2$, so the fact above follows
(as was pointed out by Sprung \cite{Spr12}).
Since  the residue degree of $\mbb{Q}_p(E[p])/\mbb{Q}_p$ is 2 by 
\cite[Proposition 12]{Ser72}, 
$F$ is contained in $\mbb{Q}_p(E[p])$. 
Writing $[-p](X)\in \mcal{O}_F[\![X]\!]$ for the power series 
representing multiplication by $-p$ on $\mcal{F}_{/\mcal{O}_F}$, we obtain
\[
\mbb{Q}_p(E[p])=F(E[p])=F(x; [-p](x)=0)\simeq F[X]/(X^{p^2-1}-p).
\]

\medskip
(2) 
Let $P$ be a non-zero point of $E[p]$ and set $L:=\mbb{Q}_p(P)$.
Denote by $\widetilde{L}$ the Galois closure of $L/\mbb{Q}_p$.
Since $E(\widetilde{L})[p]$ is a non-zero $G_{\mbb{Q}_p}$-stable 
submodule of $E[p]$ and $E[p]$ is irreducible as a $G_{\mbb{Q}_p}$-representation, we have $E(\widetilde{L})[p]=E[p]$.
This shows $\widetilde{L}=\mbb{Q}_p(E[p])$. 
One verifies immediately that any subextension field of $\mbb{Q}_p(E[p])/F$ 
is of the form $F(\sqrt[r]{p})$ for some divisor $r$ of $p^2-1$.
In particular, any subextension of $\mbb{Q}_p(E[p])/F$ is a Galois extension of $\mbb{Q}_p$.
This shows that $FL$ contains $\widetilde{L}$, which implies $FL=\mbb{Q}_p(E[p])$.
Hence the extension degree of $\mbb{Q}_p(E[p])/\mbb{Q}_p(P)$ is at most $2$ for every non-zero point $P\in E[p]$.

Now we denote by $H$ the Galois group of $\mbb{Q}_p(E[p])/\mbb{Q}_p(\sqrt[p^2-1]{p})$
and let $\sigma_0$ be the generator of $H$.
We claim that there exists a non-zero point $P\in E[p]$
such that 
\[
P+\sigma_0 P\not=0.
\] 
Assume that  $P+\sigma_0 P=0$ for any non-zero point $P\in E[p]$.
We recall that, by \cite[Proposition 9 or Proposition 12 (b)]{Ser72}, 
$E[p]$ has a structure of an $\mbb{F}_{p^2}$-vector space of dimension one
with the properties that 
$G_{\mbb{Q}_p}$-action on $E[p]$ is $\mbb{F}_{p^2}$-semi-linear 
and $I_{\mbb{Q}_p}$-action on $E[p]$ is given by the fundamental character 
$\theta\colon I_{\mbb{Q}_p}\to \mbb{F}_{p^2}^{\times}$ of level $2$, whose image has order $p^2-1$.
Fix a non-zero point $Q$ of $E[p]$ and take any $\lambda\in \mbb{F}_{p^2}$. 
By assumption, we have $\sigma_0(\lambda Q)=-\lambda Q$.
Since $G_{\mbb{Q}_p}$-action on $E[p]$ is $\mbb{F}_{p^2}$-semi-linear, 
we also have $\sigma_0(\lambda Q)=\sigma_0 \lambda.\sigma_0 Q =-(\sigma_0\lambda)Q$
(here, the second equality follows from the assumption, again).
Thus we obtain $\sigma_0 \lambda=\lambda$ for any $\lambda\in \mbb{F}_{p^2}$
but this contradicts the fact that the reduction gives an isomorphism $H\simeq \mrm{Gal}(\mbb{F}_{p^2}/\mbb{F}_p)$.
Hence the claim follows. 

Choose any $P$ as in the claim and set $P_0:=P+\sigma_0 P$. Then
\[
P_0\in E\bigl(\mbb{Q}_p(\sqrt[p^2-1]{p})\bigr)[p].
\]
Since the extension degree of $\mbb{Q}_p(E[p])/\mbb{Q}_p(P_0)$ is at most $2$, we obtain
\[
\mbb{Q}_p(P_0)=\mbb{Q}_p(\sqrt[p^2-1]{p}).
\]
Note that $G_{\mbb{Q}_p}$ acts on $E[p]$ transitively since the fundamental 
character $\theta$ is surjective. 
Thus any non-zero point of $E[p]$ is of the form 
$\sigma P_0$ for some $\sigma\in G_{\mbb{Q}_p}$.
This gives the desired result.
\end{proof}

\begin{proof}[Proof of Theorem \ref{cor:defferent}]
Suppose $\bar{E}$ is ordinary and $p \geq 3$. 
If we assume $E(F)[p] \not= 0$, then $L/\mbb{Q}_p$ is tamely ramified by 
Lemma \ref{lem:tameness}. 
Hence we have $L=F(\mu_p)$ by (1) in 
Theorem \ref{ord:tame}. 
Thus it has $u_{L/\mbb{Q}_p}=1$. 
Since the ramification index of $L/\mbb{Q}_p$ is equal to $p-1$, 
we have $v_p(\mfrak{D}_{L/\mbb{Q}_p})=\frac{p-2}{p-1}$ 
by \cite[Chapter III, Section 3, Proposition 13]{Ser68}. 
Therefore, (i) of (1) in Theorem \ref{cor:defferent} holds.
If we assume $E(F)[p]=0$, then $L/\mbb{Q}_p$ is wildly ramified by 
Lemma \ref{lem:tameness}. 
Hence we have $L=FK(\mu_p)$ with $\mbb{Q}_p$-isomorphism 
$K \simeq \mbb{Q}_p[X]/(X^p+apX+p)$ for some $a \in \{1,2,\dots,p-1\}$ 
by (1) and (2) in Theorem \ref{ord:wild}. 
According to Proposition \ref{prop:breaklambda} with $\lambda=1$, 
we have $v_p(\mfrak{D}_{L/\mbb{Q}_p})=\frac{p^2-2}{p^2-p}$ and 
$u_{L/\mbb{Q}_p}=\frac{p}{p-1}$. 
Therefore, (ii) of (1) in Theorem \ref{cor:defferent} holds.

Next, we suppose $\bar{E}$ is ordinary and $p=2$. 
By Theorem \ref{ord:p=2}, $L$ is $\mbb{Q}_2$-isomorphic to one of the following:
\[
\mbox{(i) $L=\mbb{Q}_2$,\ (ii) $L=\mbb{Q}_4$,\ (iii) $L=L_1$,\ (iv) $L=L_2$.}
\]
If $L=\mbb{Q}_2$ or $L=\mbb{Q}_4$, then we have 
$v_2(\mfrak{D}_{L/\mbb{Q}_2})=u_{L/\mbb{Q}_2}=0$. 
If $L=L_1$ or $L=L_2$, then we have $v_2(\mfrak{D}_{L/\mbb{Q}_2})=1$ 
and $u_{L/\mbb{Q}_2}=2$ by Table \ref{table:degree2}. 
Therefore, (2) in Theorem \ref{cor:defferent} holds.

Finally, we suppose $\bar{E}$ is supersingular.
Let $F/\mbb{Q}_p$ be the unramified quadratic extension. Then we have
$L=F(\sqrt[p^2-1]{p})$ by (1) in Theorem \ref{ss}. 
Since the ramification index of $L/\mbb{Q}_p$ is equal to $p^2-1$, 
we have $v_p(\mfrak{D}_{L/\mbb{Q}_p})=\frac{p^2-2}{p^2-1}$ and $u_{L/\mbb{Q}_p}=1$. 
Thus (3) in Theorem \ref{cor:defferent} holds. 
\end{proof}

\subsection{Potentially multiplicative reduction case}
From now on, 
put $L=\mbb{Q}_p(E[p])$ for an elliptic curve $E$ over $\mbb{Q}_p$. 
Let $F$ be the maximal unramified subextension in $L/\mbb{Q}_p$. 
The following group-theoretic lemma produces the degree-$p$
subextensions used below. Its final assertion identifies their Galois
closures when the tame quotient acts faithfully on the wild inertia subgroup.
\begin{lemma}\label{lem:GaloisTheory}
Let $p$ be an odd prime, let $F$ be an unramified extension of
$\mbb{Q}_p$, and let $\widetilde{L}/\mbb{Q}_p$ be a Galois extension
such that $\widetilde{L}/F(\mu_p)$ is totally ramified of degree $p$.
Put $G=\mrm{Gal}(\widetilde{L}/F)$ and
$N=\mrm{Gal}(\widetilde{L}/F(\mu_p))$. Then there exists a degree-$p$
extension $K/F$ such that $\widetilde{L}=K(\mu_p)$. If, moreover,
the conjugation action of $G/N$ on $N$ is faithful, then
$\widetilde{L}$ is the Galois closure of $K/F$.
\end{lemma}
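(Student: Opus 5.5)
Since $\widetilde{L}/\mbb{Q}_p$ is Galois and contains $F$, the extension $\widetilde{L}/F$ is Galois; moreover $F(\mu_p)/F$ is cyclic of order $p-1$ and totally ramified, because $F/\mbb{Q}_p$ is unramified. Hence $G=\mrm{Gal}(\widetilde{L}/F)$ has order $p(p-1)$, and $N=\mrm{Gal}(\widetilde{L}/F(\mu_p))$ is a normal subgroup of order $p$ with quotient $G/N\simeq\mrm{Gal}(F(\mu_p)/F)$ cyclic of order $p-1$. The plan is purely group-theoretic. First produce a complement $C$ to $N$ in $G$, i.e.\ a subgroup of order $p-1$. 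Since $\gcd(p,p-1)=1$, the Schur--Zassenhaus theorem gives one. Alternatively, avoid quoting it: take a Hall $p'$-subgroup, or lift a generator $\bar g$ of $G/N$ to $g\in G$ and replace $g$ by $g^{p}$, which still maps to a generator of $G/N$ because $p$ is prime to $p-1$, and whose order divides $p-1$ because $g^{p(p-1)}=1$; then $C=\langle g^{p}\rangle$ has order exactly $p-1$.

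Put $K=\widetilde{L}^{C}$. Then $[K:F]=[G:C]=p$. Since $C\cap N=1$, the fixed field of $C\cap N$ is $\widetilde{L}$, so $K\cdot F(\mu_p)=\widetilde{L}$, i.e.\ $\widetilde{L}=K(\mu_p)$. This proves the first assertion.

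For the second assertion, the Galois closure of $K/F$ inside $\widetilde{L}$ is the fixed field of the normal core $\bigcap_{\sigma\in G}\sigma C\sigma^{-1}$. This core is a normal subgroup of $G$ meeting $N$ trivially, so it commutes elementwise with $N$: for $x$ in the core and $n\in N$, the commutator $xnx^{-1}n^{-1}$ lies in the core (normal) and in $N$ (normal), hence is $1$. Since $G=NC$, the composite $C\to G/N$ is an isomorphism, so each element of the core maps to an element of $G/N$ acting trivially on $N$ by conjugation. Faithfulness then forces that image to be trivial, so the core lies in $N\cap C=1$. Thus the core is trivial and the Galois closure of $K/F$ is $\widetilde{L}$.

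The only delicate point is producing the complement $C$; the lifting argument above settles it cleanly. The rest is routine Galois correspondence, so I expect no real obstacle beyond checking that $F(\mu_p)/F$ has degree exactly $p-1$, which holds because $F$ is unramified over $\mbb{Q}_p$.
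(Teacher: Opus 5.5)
Your proof is correct and follows the paper's argument essentially step by step: a complement to $N$ of order $p-1$, its fixed field $K$ with $\widetilde{L}=K(\mu_p)$ because the complement meets $N$ trivially, and the core of the complement being normal with $[\text{core},N]=1$, so that faithfulness forces the core to be trivial. The only difference is your optional construction of the complement as $\langle g^{p}\rangle$ from a lift $g$ of a generator of $G/N$. This is valid and slightly more elementary than the paper's appeal to Schur--Zassenhaus, but it does not change the structure of the proof.
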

\begin{proof}
We have $\sharp G=p(p-1)$, and $N$ is the unique Sylow $p$-subgroup
of $G$. By the Schur--Zassenhaus theorem, $N$ has a complement $H$ in
$G$, so that $G=N\rtimes H$ and $\sharp H=p-1$. Put
$K=\widetilde{L}^{H}$. Then $[K:F]=p$, and since $H\cap N=1$, the
composite of the fixed fields of $H$ and $N$ is all of
$\widetilde{L}$. Thus $\widetilde{L}=KF(\mu_p)=K(\mu_p)$.

Suppose now that the conjugation action of $G/N$ on $N$ is faithful.
Let $C$ be the core of $H$ in $G$, that is,
\[
C=\bigcap_{g\in G}gHg^{-1}.
\]
Then $C$ is normal in $G$ and contained in $H$. Since $C$ and $N$ are normal
subgroups of coprime orders, $[C,N]\subseteq C\cap N=1$, so $C$
centralizes $N$. The faithfulness of the action therefore implies
$C=1$. Hence the normal closure of the fixed field $K=\widetilde{L}^H$
is $\widetilde{L}$.
\end{proof}
\begin{proof}[Proof of Theorem \ref{thm:PotentialMulti}]
By Tate uniformization, $E$ becomes isomorphic over 
$\mbb{Q}_p(\sqrt{-c_6})$ to a Tate curve 
$\mbb{G}_{\mrm{m}}/q^{\mbb{Z}}$ for some $q\in\mbb{Z}_p$. Moreover,
$L=\mbb{Q}_p(\sqrt{-c_6},\mu_p,q^{1/p})$
by \cite[Proposition 1 and equality (4)]{Kra99}. Whenever
$[L:F(\mu_p)]=p$, this is the Kummer extension obtained by adjoining
$q^{1/p}$. The conjugation action of $\mrm{Gal}(F(\mu_p)/F)$ on
$\mrm{Gal}(L/F(\mu_p))$ is then given by the mod $p$ cyclotomic
character and is faithful. Lemma~\ref{lem:GaloisTheory} therefore
provides a degree-$p$ extension $K/F$ whose Galois closure is $L$ and
for which $L=K(\mu_p)$. By
\cite[Lemma 19.8]{FrKr20}, the equality 
$\mbb{Q}_p(q^{1/p})=\mbb{Q}_p$ is equivalent to the pair of congruences
$v_p(j)\equiv 0\pmod p, \widetilde{j}^{p-1}\equiv 1\pmod{p^2}.$
Set $d=[L:\mbb{Q}_p]$.
\par\noindent\textit{Case (1).}
Assume that either condition (1)(i) or condition (1)(ii) holds. By 
\cite[Proposition 19.3]{FrKr20},
\[
 d=
 \begin{cases}
  p-1,
    & \text{if $v_p(j)\equiv0\pmod p$ and 
      $\widetilde{j}^{p-1}\equiv1\pmod{p^2}$},\\
  p(p-1), & \text{otherwise}.
 \end{cases}
\]
\par\noindent\textit{Case (1.1).}
Here $d=p-1$, and hence $L=\mbb{Q}_p(\mu_p)$. This extension is tamely 
ramified, so $u_{L/\mbb{Q}_p}=1$.
\par\noindent\textit{Case (1.2).}
Here $d=p(p-1)$. Kraus' computation gives
$v_L(\mfrak{D}_{L/\mbb{Q}_p})=2p^2-2p-1.$
Proposition~\ref{prop:break0} therefore shows that 
$L=K(\mu_p)$ for a degree-$p$ extension $K/\mbb{Q}_p$ of type 
$\langle0\rangle$, and that
$u_{L/\mbb{Q}_p}=\frac{2p-1}{p-1}.$
By Corollary~\ref{cor:ClassificationType0},
$K\simeq \mbb{Q}_p[X]/(X^p+p(1+ap))$
for some $a\in\{0,1,\ldots,p-1\}$.
\par\noindent\textit{Case (1.3).}
In this case, \cite[Proposition 19.3]{FrKr20} and 
\cite[Th\'eor\`em 1]{Kra99} give
$d=p(p-1), v_L(\mfrak{D}_{L/\mbb{Q}_p})=p^2-2.$
By Proposition~\ref{prop:breaklambda}, 
$L=K(\mu_p)$ for a degree-$p$ extension $K/\mbb{Q}_p$ of type 
$\langle\lambda,m,\omega\rangle$ with $\lambda=m=1$, and
$u_{L/\mbb{Q}_p}=\frac{p}{p-1}.$
Corollary~\ref{cor:ClassificationLambda} yields
$K\simeq \mbb{Q}_p[X]/(X^p+apX+p)$
for some $a\in\{1,2,\ldots,p-1\}$.
\par\noindent\textit{Case (2).}
Assume that either condition (2)(i) or condition (2)(ii) holds. By 
\cite[Proposition 19.3]{FrKr20},
\[
 d=
 \begin{cases}
  2(p-1),
    & \text{if $v_p(j)\equiv0\pmod p$ and 
      $\widetilde{j}^{p-1}\equiv1\pmod{p^2}$},\\
  2p(p-1), & \text{otherwise}.
 \end{cases}
\]
We first record the quadratic part of the extension. If 
$\mbb{Q}_p(\sqrt{-c_6})/\mbb{Q}_p$ is unramified, then it is 
$\mbb{Q}_{p^2}$. If it is ramified, it is distinct from the unique 
quadratic subfield of $\mbb{Q}_p(\mu_p)$ whenever $d$ has the extra factor 
$2$. The composite of these two distinct ramified quadratic extensions 
contains the unramified quadratic extension. Consequently,
$\mbb{Q}_p(\sqrt{-c_6},\mu_p)=\mbb{Q}_{p^2}(\mu_p).$
\par\noindent\textit{Case (2.1).}
Here $d=2(p-1)$, so the preceding identity gives
$L=\mbb{Q}_{p^2}(\mu_p).$
The extension is tame, and therefore $u_{L/\mbb{Q}_p}=1$.
\par\noindent\textit{Case (2.2).}
Here $d=2p(p-1)$ and, by Kraus,
$v_L(\mfrak{D}_{L/\mbb{Q}_p})=2p^2-2p-1.$
Proposition~\ref{prop:break0} gives 
$L=K(\mu_p)$ for a degree-$p$ extension $K/\mbb{Q}_{p^2}$ of type 
$\langle0\rangle$, together with
$u_{L/\mbb{Q}_p}=\frac{2p-1}{p-1}.$
By Corollary~\ref{cor:ClassificationType0},
$K\simeq \mbb{Q}_{p^2}[X]/(X^p+p(1+ap))$
for some $a\in\mfrak{R}_{\mbb{Q}_{p^2}}$.
\par\noindent\textit{Case (2.3).}
Finally,
$d=2p(p-1), v_L(\mfrak{D}_{L/\mbb{Q}_p})=p^2-2.$
Proposition~\ref{prop:breaklambda} shows that 
$L=K(\mu_p)$ for a degree-$p$ extension $K/\mbb{Q}_{p^2}$ of type 
$\langle\lambda,m,\omega\rangle$ with $\lambda=m=1$, and that
$u_{L/\mbb{Q}_p}=\frac{p}{p-1}.$
Corollary~\ref{cor:ClassificationLambda} gives
$K\simeq \mbb{Q}_{p^2}[X]/(X^p+apX+p)$
for some $a\in\mfrak{R}_{\mbb{Q}_{p^2}}\setminus\{0\}$.
\end{proof}
\begin{proof}[Proof of Theorem \ref{thm:Multiplicative:p=2}]
By \cite[Proposition 1 and equality (4)]{Kra99},
$L=\mbb{Q}_2(\sqrt{-c_6},\sqrt{q})$
for some $q\in\mbb{Z}_2$. Since $L$ is generated by two square roots, 
$[L:\mbb{Q}_2]$ divides $4$. On the other hand, the action on $E[2]$ gives
$\mrm{Gal}(L/\mbb{Q}_2)\hookrightarrow \mrm{GL}_2(\mbb{F}_2),$
and $\sharp\mrm{GL}_2(\mbb{F}_2)=6$. Hence $[L:\mbb{Q}_2]$ also divides 
$6$, and therefore
$[L:\mbb{Q}_2]\leq 2.$
If $[L:\mbb{Q}_2]=1$, then $L=\mbb{Q}_2$. Suppose that 
$[L:\mbb{Q}_2]=2$. Table~\ref{table:degree2} lists all quadratic 
extensions of $\mbb{Q}_2$: the unramified extension $\mbb{Q}_4$ and the 
six ramified extensions $L_1,\ldots,L_6$. The same table gives 
$u_{\mbb{Q}_4/\mbb{Q}_2}=0$, 
$u_{L_i/\mbb{Q}_2}=2$ for $i=1,2$, and 
$u_{L_i/\mbb{Q}_2}=3$ for $i=3,4,5,6$. This proves all three assertions.
\end{proof}
\subsection{Potentially good reduction case}
\begin{proof}[Proof of Theorem \ref{thm:PotentialGood}]
We organize the proof into reductions, tame cases, and wild cases.
\noindent\textit{Reduction of cases (5)--(7).}
By \cite[Remarque 3]{Kra99}, if $v_p(\Delta)\geq8$, there is an elliptic 
curve $E'/\mbb{Q}_p$ such that
$c_4(E')=\frac{c_4}{p^2}, c_6(E')=\frac{c_6}{p^3}, \Delta(E')=\frac{\Delta}{p^6},$
and
$\mbb{Q}_p^{\mrm{ur}}(E'[p]) =\mbb{Q}_p^{\mrm{ur}}(E[p]).$
Consequently, cases (5), (6), and (7) reduce, respectively, to cases (1), 
(2), and (3). It remains to treat cases (1)--(4).
\par\noindent\textit{Case (4): reduction to good reduction.}
By the same twisting argument as in \cite[Remarque 3]{Kra99}, there is 
an elliptic curve $E^{\dagger}/\mbb{Q}_p$ with 
$c_4(E^{\dagger})=\frac{c_4}{p^2}$, $c_6(E^{\dagger})=\frac{c_6}{p^3}$, and 
$\Delta(E^{\dagger})=\frac{\Delta}{p^6}$. The curve $E^{\dagger}$ has good reduction, 
and its base change to $T$ is isomorphic to $E_T$. Thus the height and 
canonical-lift invariant in case (4) are those defined above. Kraus 
reduces this case to the good-reduction case \cite[Th\'eor\`eme 2]{Kra99}.
\par\noindent\textit{Case (4.1).}
The curve $E^{\dagger}$ has good ordinary reduction, $E^{\dagger}[p]$ is tame, and
$v_L(\mfrak{D}_{L/F})=p-2.$
Hence $L=F(\mu_p)$ and $u_{L/\mbb{Q}_p}=1$.
\par\noindent\textit{Case (4.2).}
The curve $E^{\dagger}$ has good ordinary reduction, $E^{\dagger}[p]$ is wild, and
$v_L(\mfrak{D}_{L/F})=p^2-2.$
Proposition~\ref{prop:breaklambda} gives
$L=K(\mu_p)$ with $K\simeq F[X]/(X^p+apX+p)$ for some 
$a\in\mfrak{R}_F\setminus\{0\}$, and $u_{L/\mbb{Q}_p}=\frac{p}{p-1}$.
\par\noindent\textit{Case (4.3).}
The curve $E^{\dagger}$ has good supersingular reduction, $E^{\dagger}[p]$ is tame, and
$v_L(\mfrak{D}_{L/F})=p^2-2$, while $e(L/F)=p^2-1$.
Let
$f(X)=X^{p^2-1}+a_1X^{p^2-2}+\cdots+\pi_F$
be an Eisenstein polynomial defining $L/F$. Proposition~\ref{prop:criteria} 
gives
$L\simeq F[X]/(X^{p^2-1}+\pi_F),$
and the extension is tame, so $u_{L/\mbb{Q}_p}=1$.
\par\noindent\textit{Cases (1)--(3): common setup.}
Kraus computes $v_L(\mfrak{D}_{L/F})$ in every subcase
\cite[Th\'eor\`eme 3 and its proof]{Kra99}. When $L/F$ is wildly
ramified, its ramification index is $p(p-1)$. Kraus' description of
the inertia action also shows that
$N=\mrm{Gal}(L/F(\mu_p))$ has order $p$ and that the conjugation action
of $\mrm{Gal}(F(\mu_p)/F)$ on $N$ is faithful. Hence
Lemma~\ref{lem:GaloisTheory} gives a degree-$p$ extension $K/F$ such
that $L=K(\mu_p)$ and $L$ is the Galois closure of $K/F$.
Consequently, $L$ is of the form considered in
Proposition~\ref{prop:breaklambda} or Proposition~\ref{prop:break0}.
\noindent\textit{The tamely ramified subcases.}
In cases (1.2), (2.2), and (3.2), Kraus' computation gives
$L=F(\mu_p), v_L(\mfrak{D}_{L/\mbb{Q}_p})=p-2.$
Thus $u_{L/\mbb{Q}_p}=1$.
In cases (1.4) and (3.4),
$[L:F]=\frac{p^2-1}{3}, v_L(\mfrak{D}_{L/\mbb{Q}_p})=\frac{p^2-4}{3}.$
Proposition~\ref{prop:criteria} gives
$L\simeq F[X]/(X^{(p^2-1)/3}+\pi_F)$
for some prime element $\pi_F$ of $F$, and $u_{L/\mbb{Q}_p}=1$.
In cases (1.5), (2.4), and (3.5),
$[L:F]=p^2-1, v_L(\mfrak{D}_{L/\mbb{Q}_p})=p^2-2.$
Again by Proposition~\ref{prop:criteria},
$L\simeq F[X]/(X^{p^2-1}+\pi_F),$
and $u_{L/\mbb{Q}_p}=1$.
\noindent\textit{The wildly ramified subcases.}
For these cases, the valuations computed by Kraus are strictly smaller than 
$2p^2-2p-1$, the value associated with type $\langle0\rangle$. Hence 
Proposition~\ref{prop:breaklambda} applies. Writing the degree-$p$ subextension 
as type $\langle\lambda,1,\omega\rangle$, we have
\[
 v_L(\mfrak{D}_{L/F})=p^2+(\lambda-1)p-\lambda-1,\quad 
 u_{L/\mbb{Q}_p}=\frac{p-1+\lambda}{p-1}.
\]
Solving for $\lambda$ in each case gives the following table.
\begin{center}
\renewcommand{\arraystretch}{1.35}
\begin{tabular}{c|c|c|c}
Case & $v_L(\mfrak{D}_{L/F})$ & $\lambda$ & $u_{L/\mbb{Q}_p}$ \\ \hline
(1.1) & $\dfrac{5p^2-4p-4}{3}$ & $\dfrac{2p+1}{3}$ & $\dfrac{5p-2}{3p-3}$ \\[1mm]
(1.3) & $\dfrac{5p^2-6p-2}{3}$ & $\dfrac{2p-1}{3}$ & $\dfrac{5p-4}{3p-3}$ \\[1mm]
(2.1) & $\dfrac{3p^2-2p-3}{2}$ & $\dfrac{p+1}{2}$ & $\dfrac{3p-1}{2p-2}$ \\[1mm]
(2.3) & $\dfrac{3p^2-4p-1}{2}$ & $\dfrac{p-1}{2}$ & $\dfrac{3}{2}$ \\[1mm]
(3.1) & $\dfrac{4p^2-2p-5}{3}$ & $\dfrac{p+2}{3}$ & $\dfrac{4p-1}{3p-3}$ \\[1mm]
(3.3) & $\dfrac{4p^2-6p-1}{3}$ & $\dfrac{p-2}{3}$ & $\dfrac{4p-5}{3p-3}$
\end{tabular}
\end{center}
Finally, Corollary~\ref{cor:ClassificationLambda} gives, in every row,
\[
 K\simeq F[X]/(X^p+apX^{\lambda}+p)
\]
for some $a\in\mfrak{R}_F\setminus\{0\}$. These are precisely the defining 
polynomials stated in cases (1.1), (1.3), (2.1), (2.3), (3.1), and (3.3). 
Together with the initial reduction, this also proves the corresponding 
assertions in cases (5)--(7).
\end{proof}
\begin{proof}[Proof of Theorem \ref{thm:Additive:p=3}]
Since $F/\mbb{Q}_3$ is unramified, the ramification of
$L/\mbb{Q}_3$ is determined by that of $L/F$. We use Kraus'
computation of the different and his description of the ramification
in \cite[Th\'eor\`eme 4 and its proof]{Kra99}.

\par\noindent\textit{Case (1.1).}
Suppose that $2v_3(c_6)\leq 4+v_3(\Delta)$ and
$\widetilde{\Delta}^{\,2}\equiv1\pmod{9}$. Kraus shows that
$L=F(\mu_3)$. Since $L/F$ is a tamely ramified quadratic extension,
we have $u_{L/\mbb{Q}_3}=1$.

\par\noindent\textit{Case (1.2).}
Suppose that $2v_3(c_6)\leq 4+v_3(\Delta)$ and
$\widetilde{\Delta}^{\,2}\not\equiv1\pmod{9}$. Kraus shows that
$L=F(\mu_3,\Delta^{1/3})$, $e(L/F)=6$, and
$v_L(\mfrak{D}_{L/F})=7$.

Put $M=\mbb{Q}_3(\mu_3,\Delta^{1/3})$ and
$K=\mbb{Q}_3(\Delta^{1/3})$. Since $F/\mbb{Q}_3$ is unramified
and $L=FM$ has ramification index $6$ over $F$, the extension
$M/\mbb{Q}_3$ is totally ramified of degree $6$. In particular,
$K/\mbb{Q}_3$ has degree $3$, and $M=K(\mu_3)$ is the Galois
closure of $K/\mbb{Q}_3$. Moreover, unramified base change gives
$v_3(\mfrak{D}_{M/\mbb{Q}_3})=\frac{7}{6}$.

Comparing Propositions~\ref{prop:breaklambda} and
\ref{prop:break0}, we find that $K/\mbb{Q}_3$ is of type
$\langle\lambda,1,\omega\rangle$ with $\lambda=1$.
Proposition~\ref{prop:breaklambda} therefore gives
$u_{L/\mbb{Q}_3}=\frac{3}{2}$.

Take $\pi_{\mbb{Q}_3}=-3$ and
$\mfrak{R}_{\mbb{Q}_3}=\{0,1,2\}$. By
Corollary~\ref{cor:ClassificationLambda}, $K$ is isomorphic to one
of the two fields
$\mbb{Q}_3[X]/(X^3+3X+3)$ and
$\mbb{Q}_3[X]/(X^3+6X+3)$.

For $\omega\in\{1,2\}$, put
$K_{\omega}=\mbb{Q}_3[X]/(X^3+3\omega X+3)$.
In the notation of Proposition~\ref{prop:GaloisCriteria},
the extension $K_{\omega}/\mbb{Q}_3$ is of type
$\langle1,1,\omega\rangle$. Since
$1+1-1\not\equiv0\pmod{2}$, it is not Galois. The same proposition
shows that its Galois closure is
$K_{\omega}(\sqrt{-3\omega})$.

The Galois closure of $K/\mbb{Q}_3$ contains
$\mbb{Q}_3(\mu_3)=\mbb{Q}_3(\sqrt{-3})$. For $\omega=1$, the
quadratic subfield of the Galois closure of $K_{\omega}$ is
$\mbb{Q}_3(\sqrt{-3})$. For $\omega=2$, it is
$\mbb{Q}_3(\sqrt{-6})$, which is distinct from
$\mbb{Q}_3(\sqrt{-3})$ because $2$ is not a square in
$\mbb{Q}_3$. Hence $\omega=1$, and therefore
$K\simeq\mbb{Q}_3[X]/(X^3+3X+3)$.

\par\noindent\textit{Case (1.3).}
Suppose that $2v_3(c_6)>4+v_3(\Delta)$. Kraus shows that, after
extending the scalars to $\overline{\mbb{F}}_3$, the inertia action
on $E[3]$ is diagonal and is described by fundamental characters
of level $2$ and order $8$. Hence the inertia image has order $8$.
Since $F$ is the maximal unramified subextension of
$L/\mbb{Q}_3$, the extension $L/F$ is totally ramified of degree
$8$. It is tame, and hence $u_{L/\mbb{Q}_3}=1$.
Let $f(X)=X^8+a_7X^7+\cdots+a_1X+a_0$ be an Eisenstein polynomial 
defining $L/F$, and put $\pi_F=a_0$. Then 
$v_F(f,X^8+\pi_F)\geq 1+\frac{1}{8}>1=u_{L/F}$. 
Proposition~\ref{prop:criteria} therefore gives 
$L\simeq F[X]/(X^8+\pi_F)$.

\par\noindent\textit{Case (2.1).}
Suppose that $v_3(\Delta)\not\equiv0\pmod{3}$. Kraus shows that
$L=F(\mu_3,\Delta^{1/3})$, $e(L/F)=6$, and
$v_L(\mfrak{D}_{L/F})=11$.

Put $M=\mbb{Q}_3(\mu_3,\Delta^{1/3})$ and
$K=\mbb{Q}_3(\Delta^{1/3})$. As in case (1.2),
$M/\mbb{Q}_3$ is totally ramified of degree $6$,
$M=K(\mu_3)$ is the Galois closure of $K/\mbb{Q}_3$, and
$v_3(\mfrak{D}_{M/\mbb{Q}_3})=\frac{11}{6}$.

Comparing Propositions~\ref{prop:breaklambda} and
\ref{prop:break0}, we find that $K/\mbb{Q}_3$ is of type
$\langle0\rangle$. Proposition~\ref{prop:break0} gives
$u_{L/\mbb{Q}_3}=\frac{5}{2}$.

Taking $\mfrak{R}_{\mbb{Q}_3}=\{0,1,2\}$ in
Corollary~\ref{cor:ClassificationType0}, we conclude that $K$ is
isomorphic to one of
$\mbb{Q}_3[X]/(X^3+3)$,
$\mbb{Q}_3[X]/(X^3+12)$, and
$\mbb{Q}_3[X]/(X^3+21)$.
This proves the theorem.
\end{proof}
\begin{proof}[Proof of Theorem \ref{thm:Additive:p=2}]
Let $G=\mrm{Gal}(L/\mbb{Q}_2)$ and
$I=\mrm{Gal}(L/F)$. The natural action on $E[2]$ gives an injection
$G\hookrightarrow\mrm{GL}_2(\mbb{F}_2)\simeq S_3$. Since $F$ is
the maximal unramified subextension of $L/\mbb{Q}_2$, the group $I$
is the inertia subgroup of $G$, and $\sharp I=e(L/F)$.

We first note that $\sharp I\neq6$. Indeed, if $\sharp I=6$, then
$I\simeq S_3$. On the other hand, the wild inertia subgroup is a
normal Sylow $2$-subgroup of $I$, whereas $S_3$ has no normal
Sylow $2$-subgroup. Thus $\sharp I\in\{1,2,3\}$. We now apply
Kraus' computation in \cite[Th\'eor\`eme 5 and its proof]{Kra99}.

\par\noindent\textit{Case (1.1).}
Suppose that $v_2(\Delta)$ is odd. Kraus shows that
$L=F(\sqrt{\Delta})$ and that $L/F$ is quadratic. Since
$v_2(\Delta)$ is odd, the ramified quadratic extension
$\mbb{Q}_2(\sqrt{\Delta})/\mbb{Q}_2$ is one of
$L_3,L_4,L_5,L_6$ in Table~\ref{table:degree2}. Hence
$L=FL_i$ for some $i\in\{3,4,5,6\}$. Since $F/\mbb{Q}_2$ is
unramified, Table~\ref{table:degree2} also gives
$u_{L/\mbb{Q}_2}=3$.

\par\noindent\textit{Case (2.1).}
Suppose that $v_2(\Delta)$ is even and
$\widetilde{\Delta}\equiv-1\pmod{4}$. Then $\Delta$ is not a
square in $\mbb{Q}_2^{\mrm{ur}}$. Kraus shows that
$L=F(\sqrt{\Delta})$ and that $L/F$ is quadratic. Since
$v_2(\Delta)$ is even, we have
$\mbb{Q}_2(\sqrt{\Delta})
=\mbb{Q}_2(\sqrt{\widetilde{\Delta}})$. The condition
$\widetilde{\Delta}\equiv-1\pmod{4}$ implies that this is one of
the two ramified quadratic extensions $L_1$ and $L_2$ in
Table~\ref{table:degree2}. Therefore $L=FL_i$ for some
$i\in\{1,2\}$, and Table~\ref{table:degree2} gives
$u_{L/\mbb{Q}_2}=2$.

\par\noindent\textit{Case (2.2).}
Suppose that $v_2(\Delta)\in\{6,8,12,14\}$ and
$3v_2(c_4)\geq8+v_2(\Delta)$. 
Kraus' argument shows that $\Delta$ is a square in
$\mbb{Q}_2^{\mrm{ur}}$. It also follows from the Newton polygon of
the polynomial whose roots are the $x$-coordinates of the non-zero
$2$-torsion points of $E$ that this polynomial has no root in
$\mbb{Q}_2^{\mrm{ur}}$. Hence no non-zero point of $E[2]$ is
defined over $\mbb{Q}_2^{\mrm{ur}}$. Consequently, $L/F$ is a
totally ramified extension of degree $3$.

The extension $L/F$ is tame. Moreover, $G$ cannot be cyclic of
order $3$, since there is no tamely and totally ramified Galois
extension of degree $3$ over $\mbb{Q}_2$. Hence $G\simeq S_3$,
and $F/\mbb{Q}_2$ is the unramified quadratic extension. Let
$K/\mbb{Q}_2$ be a degree-$3$ subextension of
$L/\mbb{Q}_2$. Then $K/\mbb{Q}_2$ is totally ramified and
$L=FK$.

There is a unique $\mbb{Q}_2$-isomorphism class of totally
ramified cubic extensions of $\mbb{Q}_2$. Indeed, such an
extension is tame, and every unit of $\mbb{Z}_2^\times$ is a
cube by Hensel's lemma. This extension is represented by
$\mbb{Q}_2[X]/(X^3+2)$. It follows that
$L\simeq F[X]/(X^3+2)$. Since $L/F$ is tamely ramified and
non-trivial, we have $u_{L/\mbb{Q}_2}=1$.

\par\noindent\textit{Case (2.3).}
In all the remaining cases, Kraus' computation gives
$v_L(\mfrak{D}_{L/F})=0$. Thus $L/F$ is unramified. Since
$F$ is the maximal unramified subextension of
$L/\mbb{Q}_2$, the extension $L/F$ must be trivial. Therefore
$L=F$ and $u_{L/\mbb{Q}_2}=0$.
\end{proof}
\bibliographystyle{alpha}
\bibliography{references}

\end{document}